\documentclass[onefignum,onetabnum]{siamart171218}
\newcommand{\norm}[1]{\left\lVert#1\right\rVert}
\usepackage{enumerate}
\usepackage{booktabs}
\usepackage{multirow}
\usepackage{multicol}
\usepackage{rotating}
 \usepackage{bigstrut}
 \usepackage{subcaption}
 \usepackage{tikz}
 \usetikzlibrary{shapes.geometric}



\usepackage{lipsum}
\usepackage{amsfonts}
\usepackage{graphicx}
\usepackage{epstopdf}
\usepackage{algorithmic}
\ifpdf
  \DeclareGraphicsExtensions{.eps,.pdf,.png,.jpg}
\else
  \DeclareGraphicsExtensions{.eps}
\fi


\newsiamremark{remark}{Remark}
\newsiamremark{hypothesis}{Hypothesis}
\crefname{hypothesis}{Hypothesis}{Hypotheses}
\newsiamthm{claim}{Claim}

\headers{An efficient sieving based secant method}{Qian Li, Defeng Sun, and Yancheng Yuan}

\title{An efficient sieving based secant method for sparse optimization problems with least-squares constraints\thanks{
\funding{The research  of Qian Li was supported in part  by Huawei Collaborative Grants ``Large scale linear programming solver" and  ``Solving large scale linear programming models for production planning". The research  of Defeng Sun   was supported   by  grants from the Research Grants Council of the Hong Kong Special Administrative Region, China (GRF Project No. 15304721 and  RGC Senior Research Fellow Scheme No. SRFS2223-5S02).  The research of Yancheng Yuan was supported by the Hong Kong Polytechnic University under grant P0038284.
}}}

\author{Qian Li\thanks{Department of Applied Mathematics, The Hong Kong Polytechnic University, Hung Hom, Hong Kong
  (\email{qianxa.li@connect.polyu.hk}).}
  \and Defeng Sun\thanks{Department of Applied Mathematics, The Hong Kong Polytechnic University, Hung Hom, Hong Kong (\email{defeng.sun@polyu.edu.hk}).}
  \and Yancheng Yuan\thanks{Department of Applied Mathematics, The Hong Kong Polytechnic University, Hung Hom, Hong Kong
  		(\email{yancheng.yuan@polyu.edu.hk}).}
}

\usepackage{amsopn}

\makeatletter
\newcommand*{\addFileDependency}[1]{
  \typeout{(#1)}
  \@addtofilelist{#1}
  \IfFileExists{#1}{}{\typeout{No file #1.}}
}
\makeatother


\definecolor{blue}{rgb}{0,0,0.9}
\definecolor{red}{rgb}{0.9,0,0}
\definecolor{green}{rgb}{0.0, 0.5, 0.0}

\ifpdf
\hypersetup{
  pdftitle={A highly efficient sieving based secant method for sparse optimization problems with least-squares constraints},
  pdfauthor={Authors}
}
\fi



\usepackage{amssymb}

\begin{document}

\maketitle

\begin{abstract}
  In this paper, we propose an efficient sieving based secant method to address the computational challenges of solving sparse optimization problems with least-squares constraints. A level-set method has been introduced in [X. Li, D.F. Sun, and K.-C. Toh, SIAM J. Optim., 28 (2018), pp. 1842--1866] that solves these problems by using the bisection method to find a root of a univariate nonsmooth equation $\varphi(\lambda) = \varrho$ for some $\varrho > 0$, where $\varphi(\cdot)$ is the value function computed by a solution of the corresponding regularized least-squares optimization problem.  When the objective function in the constrained problem is a polyhedral gauge function, we prove that (i) for any positive integer $k$, $\varphi(\cdot)$ is piecewise $C^k$ in an open interval containing the solution $\lambda^*$ to the equation $\varphi(\lambda) = \varrho$; (ii) the Clarke Jacobian of $\varphi(\cdot)$ is always positive. These results allow us to establish the essential ingredients of the fast convergence rates of the secant method. Moreover, an adaptive sieving technique is incorporated into the secant method to effectively reduce the dimension of the level-set subproblems for computing the value of $\varphi(\cdot)$. The high efficiency of the proposed algorithm is demonstrated by extensive numerical results.
\end{abstract}

\begin{keywords}
  level-set method, secant method, semismooth analysis, adaptive sieving
\end{keywords}

\begin{AMS}
   90C06, 90C25, 90C90
\end{AMS}

\section{Introduction}
In this paper, we consider the following least-squares constrained optimization problem
\begin{equation}
\tag{\mbox{CP($\varrho$)}}
\label{eq: main-prob}
\min_{x \in \mathbb{R}^n} ~ \left\{p(x) \,|\, \|Ax - b\| \leq \varrho\right\},
\end{equation}
where $A \in \mathbb{R}^{m \times n}$ and $b \in \mathbb{R}^m$ are given data, $\varrho$ is a given parameter satisfying $0 < \varrho < \|b\|$, and $p: \mathbb{R}^n \to (-\infty, +\infty]$ is a proper closed convex function with $p(0) = 0$ that possesses the property of promoting sparsity. {Without loss of generality}, we assume that \cref{eq: main-prob} admits active solutions here.

Let $\lambda > 0$ be a given positive parameter.
Compared to the following regularized problem of the form
\begin{equation}
\tag{\mbox{${\rm P_{LS}}(\lambda)$}}
\label{eq: reg-LS-form}
\min_{x \in \mathbb{R}^n} ~ \left\{\frac{1}{2}\|Ax - b\|^2 + \lambda p(x)\right\},
\end{equation}
the constrained optimization problem \cref{eq: main-prob} is usually preferred in practical modeling since we can regard $\varrho$ as the noise level, which can be estimated in many applications. However, the optimization problem \cref{eq: main-prob} is {perceived to be} more challenging to solve in general due to the complicated geometry of the feasible set \cite{Aravkin18levelset}. Some algorithms such as the alternating direction method of multipliers (ADMM) \cite{glowinski1975approximation,gabay1976dual} are applicable to solve \cref{eq: main-prob}. {N}evertheless, to obtain an acceptable solution remains challenging {for these algorithms}.
In particular, when applying the ADMM for solving \cref{eq: main-prob}, it is computationally expensive to form the matrix $AA^T$ or to solve the linear systems involved in the subproblems. Recently, the dimension reduction techniques, such as the adaptive sieving \cite{AS-YLST23,AS-YST22}, have achieved some success in solving large-scale sparse optimization problems numerically by exploiting the solution sparsity. But it is still unclear how to apply dimension reduction techniques to \cref{eq: main-prob} due to the potential {infeasibility} issue for reduced problems.

A popular approach for solving \cref{eq: main-prob} and the more general convex constrained optimization problems is the level-set method \cite{Van08levelset,Van11levelset,Aravkin18levelset}, which has been widely used in many interesting applications \cite{Van08levelset,Van11levelset,Aravkin18levelset,li2018efficiently}. The idea of exchanging the role of the objective function and the constraints, which is the key for the level-set method, has a long history and can date back to Queen Dido's problem (see \cite[Page 548]{Richard01}). Readers can refer to \cite[Section 1.3]{Aravkin18levelset} and the references therein for a discussion of the history of the level-set method. In particular, the level-set method developed in \cite{Van08levelset,Van11levelset} solves the optimization problem \cref{eq: main-prob} by finding a root of the following univariate nonlinear equation
\begin{equation}
\tag{\mbox{$E_{\phi}$}}
\label{eq: root-finding-origin-levelset}
    \phi(\tau) = \varrho,
\end{equation}
where $\phi(\cdot)$ is the value function of the following level-set problem
\begin{equation}
\label{eq: origin-levelset-subprob}
\phi(\tau) := \min_{x \in \mathbb{R}^n} ~ \{\|Ax - b\| \,|\, p(x) \leq \tau\}, \quad \tau \geq 0.
\end{equation}
Therefore, by executing a root-finding procedure for \cref{eq: root-finding-origin-levelset} (e.g., the bisection method), one can obtain a solution to \cref{eq: main-prob} by solving a sequence of problems in the form of \cref{eq: origin-levelset-subprob} parameterized by $\tau$. In {implementations}, one needs {an efficient procedure} to compute the metric projection of given vector{s} onto the constraint set $\mathcal{F}_{p}(\tau) := \{x \in \mathbb{R}^n \,|\, p(x) \leq \tau, \, \tau > 0\}$. {However, such} an efficient computation procedure may not be available. One example can be found in \cite{li2018efficiently}, where $p(\cdot)$ is the fused Lasso regularizer \cite{Tibshiranifused05}. 
Moreover, it is still {not} clear {to us} how to {deal with the infeasibility issue when a dimension reduction technique is applied to} \cref{eq: origin-levelset-subprob}.

Recently, Li et al. \cite{li2018efficiently} proposed a level-set method for solving \cref{eq: main-prob} {via solving a sequence of \cref{eq: reg-LS-form}}. The dual of \cref{eq: reg-LS-form} can be written as
\begin{equation}
\tag{\mbox{${\rm D_{LS}}(\lambda)$}}
\label{eq: dual-LS-reg}
 \max_{y \in \mathbb{R}^m, u \in \mathbb{R}^n} \left\{-\frac{1}{2}\|y\|^2 + \langle b, y \rangle - \lambda p^*(u)\,|\, A^Ty - \lambda u = 0\right\},
\end{equation}
where $p^*(\cdot)$ is the Fenchel conjugate function of $p(\cdot)$, i.e., $p^*(z) = \sup_{x \in \mathbb{R}^n} ~ \{\langle z, x \rangle - p(x)\}, \, z \in \mathbb{R}^n$.
Let $\Omega(\lambda)$ be the solution set to \cref{eq: reg-LS-form}. Define the gauge $\Upsilon(\cdot~|~ C)$ of a nonempty convex set $C \subseteq \mathbb{R}^n$ as $\Upsilon(x ~|~ C) := \inf\{\nu \geq 0 ~|~ x \in \nu C\}, \, x \in \mathbb{R}^n$. Denote $\partial p(0)$ as the subdifferential of $p(\cdot)$ at the origin. In this paper, we assume
\begin{equation}
\label{eq: def-lambda-inf}
\lambda_{\infty} := \Upsilon(A^T b\,|\,\partial p(0)) \in (0, +\infty)
\end{equation}
and {that} for any $\lambda^\prime > 0$, there exists $(y(\lambda^\prime ), u(\lambda^\prime ), x(\lambda^\prime )) \in \mathbb{R}^m \times \mathbb{R}^n \times \mathbb{R}^n$ satisfying the following Karush-Kuhn-Tucker (KKT) system
\begin{equation}
\tag{KKT}
\label{eq: KKT-PD}
x \in \partial p^*(u), \quad y - b + Ax = 0, \quad A^T y  - \lambda^\prime u = 0,
\end{equation}
where $\partial p^*(\cdot)$ is the subdifferential of $p^*(\cdot)$. Consequently, the solution set $\Omega(\lambda)$ to \cref{eq: reg-LS-form} is  nonempty, and $b - Ax(\lambda)$ is invariant for all $x(\lambda) \in \Omega(\lambda)$ since the solution $(y(\lambda), u(\lambda))$ to \cref{eq: dual-LS-reg} is unique. Based on this fact, Li et al. \cite{li2018efficiently} proposed to solve \cref{eq: main-prob} by finding the root of the following equation:
\begin{equation}
\tag{\mbox{$E_{\varphi}$}}
\label{eq: root-finding-levelset-lst}
\varphi(\lambda) := \|Ax(\lambda) - b\| = \varrho,
\end{equation}
where $x(\lambda) \in \Omega(\lambda)$ is any solution to \cref{eq: reg-LS-form}. We assume that \cref{eq: root-finding-levelset-lst} has at least one solution $\lambda^* > 0$. We then know that any $x(\lambda^*) \in \Omega(\lambda^*)$ is a solution to \cref{eq: main-prob} \cite{li2018efficiently,Exactreg07}. There are {several} advantages to this approach. Firstly, it requires computing the proximal mapping of $p(\cdot)$, which is normally easier than computing the projection over the constraint set of \cref{eq: origin-levelset-subprob}. Secondly, efficient algorithms are available to solve the regularized least-squares problem \cref{eq: reg-LS-form} for a wide class of functions $p(\cdot)$ \cite{li2018highly,li2018efficiently,luo2019solving,Zhang2020,beck2009fast,glowinski1975approximation}.
More importantly, this approach is well-suited for applying dimension reduction techniques to solve \cref{eq: reg-LS-form} {as can be seen in subsequent sections}.

In this paper, we propose an efficient sieving based secant method for solving \cref{eq: main-prob} by finding the root of \cref{eq: root-finding-levelset-lst}. We call our algorithm \textbf{SMOP} as it is a root finding based \textbf{S}ecant \textbf{M}ethod for solving  the \textbf{O}ptimization \textbf{P}roblem \cref{eq: main-prob}.  We focus on the case where $p(\cdot)$ is a gauge function (see \cite[Section 15]{rockafellar1970convex}), i.e., $p(\cdot)$ is a nonnegative positively homogeneous convex function with $p(0) = 0$. We start by studying the properties of the value function $\varphi(\cdot)$ and the convergence rates of the secant method for solving \cref{eq: root-finding-levelset-lst}.
To address the computational challenges for solving \cref{eq: reg-LS-form} and computing the function value of $\varphi(\cdot)$, we incorporate an adaptive sieving (AS) technique \cite{AS-YLST23,AS-YST22} into the secant method to effectively reduce the dimension of \cref{eq: reg-LS-form}. The AS technique can exploit the sparsity of the solution of \cref{eq: reg-LS-form} so that {one} can obtain a solution to \cref{eq: reg-LS-form} by solving a sequence of reduced problems with much smaller dimensions. Extensive numerical results will be presented in this paper to demonstrate the superior performance of the proposed algorithm in solving \cref{eq: main-prob}.

The main contributions of this paper can be summarized in the following:
\begin{itemize}
    \item[1.] When $p(\cdot)$ is a gauge function, we prove that $\varphi(\cdot)$ is (strongly) semismooth for a wide class of instances of $p(\cdot)$ via connecting \ref{eq: dual-LS-reg} to a {metric} projection problem. More importantly, when $p(\cdot)$ is a polyhedral gauge function, we show that $\varphi(\cdot)$ is locally piecewise $C^k$ on $(0, \lambda_{\infty})$ for any integer $k \geq 1$; and for any $\bar{\lambda} \in (0, \lambda_{\infty})$, $v > 0$ for any $v \in \partial\varphi(\bar{\lambda})$.
    \item[2.] {Under the assumption that} $p(\cdot)$ is a polyhedral gauge function, we {show} that the secant method converges at least 3-step Q-quadratically for solving \cref{eq: root-finding-levelset-lst}, and if $\partial_{\rm B}\varphi(\lambda^*)$ is a singleton, the secant method converges superlinearly with Q-order at least $(1+\sqrt{5})/2$. Furthermore, for a general strongly semismooth function $\varphi(\cdot)$, if $\partial \varphi(\lambda^*)$ is a singleton and nondegenerate, the secant method converges superlinearly with R-order of at least $(1+\sqrt{5})/2$.
    \item[3.] We propose an efficient sieving based secant method to address the computational challenges for solving \cref{eq: main-prob}. The algorithm incorporates a fast convergent secant method for root-finding of \cref{eq: root-finding-levelset-lst}, along with an AS technique for effectively reducing the dimension of subproblems in the form of \cref{eq: reg-LS-form}. The efficiency of the proposed algorithm for solving \cref{eq: main-prob} will be demonstrated by extensive numerical results.
\end{itemize}

The rest of the paper is organized as follows. We will introduce some necessary preliminary results in \cref{sec: preliminary}. We discuss the properties of the value function $\varphi(\cdot)$ in \cref{sec: property-value-function,sec: HS-Jacobian}. A secant method for solving \cref{eq: main-prob} will be introduced and analyzed in \cref{sec: secant-levelset}. We will introduce the AS technique in \cref{sec: AS-levelset} followed by {presenting} extensive numerical results in \cref{sec: numerical-experiment}. {We} conclude the paper in \cref{sec: conclusion}.

\noindent \textbf{Notation}. Let $n \geq 1$ be any given integer. Denote the nonnegative orthant and the positive orthant of $\mathbb{R}^n$ as $\mathbb{R}^n_+$ and $\mathbb{R}^n_{++}$, respectively. We denote $[n] := \{1, 2, \dots, n\}$. We denote the subvector generated by $x \in \mathbb{R}^n$ indexed by $K \subseteq [n]$ as $x_K$ and submatrix generated by the columns (rows) of $A \in \mathbb{R}^{m \times n}$ indexed by $K \subseteq [n]$ ($K \subseteq [m]$) as $A_{:K}$ ($A_{K:}$). For any $x \in \mathbb{R}^n$ and any integer $q \geq 1$, the $\ell_q$ norm of $x$ is defined as $\|x\|_q := \sqrt[q]{|x_1|^q + \cdots + |x_n|^q}$. We denote $\|\cdot\| = \|\cdot\|_2$. Let $U \subseteq \mathbb{R}^n$ be an open set. We say that a function $f: U \to \mathbb{R}$ is $C^k$ for some integer $k \geq 1$ if $f{(\cdot)}$ is $k$-times continuously differentiable on $U$. Let  $p: \mathbb{R}^n \to (-\infty, +\infty]$ be a proper closed convex function. The proximal mapping of {$p(\cdot)$} is defined by
\[
{\rm Prox}_p(z) := \arg\min_{x \in \mathbb{R}^n} \left\{ p(x) + \frac{1}{2}\|x - z\|^2 \right\}, \quad z \in \mathbb{R}^n.
\]
The polar of a gauge function $p(\cdot)$ is defined by \[p^{\circ}(y) := \inf \{\nu \geq 0 \,|\, \langle y, x \rangle \leq \nu p(x), ~ \forall \, x \in \mathbb{R}^n\}, \quad y \in \mathbb{R}^n.\] Let $C \subseteq \mathbb{R}^n$ be a {nonempty} closed convex set. We use $\Pi_C(\cdot)$ to denote the metric projector over $C$. The indicator function of $C$ is defined as
\[
\delta(x \,|\, C) := \left\{
\begin{array}{ll}
 0    & \mbox{if $x \in C$},\\
 +\infty    & \mbox{otherwise}.
\end{array}
\right.
\]

\section{Preliminaries}
\label{sec: preliminary}
Let $\mathcal{X}$ and $\mathcal{Y}$ be two finite dimensional real vector spaces each equipped with a scalar product $\langle \cdot, \cdot \rangle$ and its induced norm $\|\cdot\|$. Denote the set of all linear operators from $\mathcal{X}$ to $\mathcal{Y}$ by $\mathcal{L}(\mathcal{X}, \mathcal{Y})$. We first review some preliminary results related to the generalized Jacobians and the semismoothness. Let $\mathcal{O} \subseteq \mathcal{X}$ be an open set and $F: \mathcal{O} \to \mathcal{Y}$ be a locally Lipschitz continuous function on $\mathcal{O}$. According to Rademacher's theorem, $F(\cdot)$ is differentiable (in the sense of Fr$\rm \acute{e}$chet) almost everywhere on $\mathcal{O}$. Denote by $D_F$ the set of all points on $\mathcal{O}$ where $F(\cdot)$ is differentiable.
Denote $F'(x)$ as the Jacobian of $F(\cdot)$ at $x \in D_F$. Define the B-subdifferential of $F(\cdot)$ at $x \in \mathbb{R}^n$ as
\begin{equation}
	\label{def: B-subdifferential}
	\partial_{\rm B} F(x) := \left\{\lim_{x^k \to x, x^k \in D_F} F'(x^k)\right\}.
\end{equation}
The Clarke generalized Jacobian of $F(\cdot)$ at $x \in \mathbb{R}^n$ is then defined as follows \cite{clarke1983optimization}:
\begin{equation}
	\label{def: Clarke-Jacobian}
	\partial F(x) := {\rm conv}(\partial_{\rm B} F(x)).
\end{equation}
Note that both $\partial_{\rm B} F(\cdot)$ and $\partial F(\cdot)$ are compact valued and upper-semicontinuous multi-valued functions. For finitely valued convex functions, the Clarke generalized Jacobian coincides with the subdifferential in the sense of convex analysis \cite[Proposition 2.2.7]{clarke1983optimization}.
Now, we introduce the concept of G-semismoothness (with respect to a multifunction).
\begin{definition}\label{def: semismooth}
	Let $\mathcal{O}\subseteq \mathcal{X}$ be an open set, $F:\ \mathcal{O}\rightarrow \mathcal{Y}$ be a locally Lipschitz continuous function, and $T_F:\ \mathcal{O}\rightrightarrows \mathcal{L}(\mathcal{X}, \mathcal{Y})$ be a nonempty and compact valued, upper-semicontinuous set-valued mapping. $F(\cdot)$ is said to be G-semismooth at $x\in\mathcal{O}$ with respect to the multifunction $T_F$,  if for any $V\in T_F(x+\Delta x)$ with $\Delta x\rightarrow0$,
	\begin{equation}
        \label{def: G-semismooth}
		F(x+\Delta x)-F(x)-V\Delta x=o(\|\Delta x\|).
	\end{equation}
	Let $\gamma > 0$ be a constant. $F(\cdot)$ is said to be $\gamma$-order (strongly, if $\gamma = 1$) G-semismooth at $x\in\mathcal{O}$ with respect to $T_F$ if for any $V\in T_F(x+\Delta x)$ with $\Delta x\rightarrow 0$,
	\begin{equation}
		F(x+\Delta x)-F(x)-V\Delta x=O(\|\Delta x\|^{1+\gamma}).
	\end{equation}
 $F(\cdot)$ is said to be G-semismooth (respectively, $\gamma$-order G-semismooth, strongly G-semismooth) on $\mathcal{O}$ with respect to the multifunction $T_F$ if it is G-semismooth (respectively, $\gamma$-order G-semismooth, strongly G-semismooth) everywhere in $\mathcal{O}$ with respect to the multifunction $T_F$. All the above definitions of G-semimsoothness will be replaced by semismoothness if $F(\cdot)$ happens to be directionally differentiable at the concerned point $x \in \mathcal{O}$.
\end{definition}

Let $f: \mathbb{R} \to \mathbb{R}$ be a {real valued functional}. Denote for $x \neq y$ that
\begin{equation}
    \delta_f(x, y) := \left(f(x) - f(y)\right) / (x - y).
\end{equation}
The following lemma is useful {for analyzing the convergence of the secant method}. Part (i) of this lemma is from \cite[Lemma 2.2]{Potra98}, and Part (ii) can be proved by following a similar procedure as in the proof of \cite[Lemma 2.3]{Potra98}.

\begin{lemma}
\label{lemma: lateral-derivative-approx}
Assume that $f: \mathbb{R} \to \mathbb{R}$ is semismooth at $\bar{x} \in \mathbb{R}$. Denote the lateral derivatives of $f$ at $\bar x$ by
    \begin{equation}
    \label{def: lateral-derivative}
        \bar{d}^{-} := - f'(\bar{x}; -1)\quad  \mbox{ and } \quad \bar{d}^{+} := f'(\bar{x}; 1).
    \end{equation}
\begin{romannum}
    \item Then the lateral derivatives $\bar{d}^{-} $ and $\bar{d}^{+}$ exist and
    \[
    \partial_{\rm B}f(\bar{x}) = \{\bar{d}^{-}, \bar{d}^{+}\};
    \]
    \item It holds that
    \begin{eqnarray}
    \bar{d}^{-} - \delta_f(u, v) = o(1) \quad \mbox{for all $u \uparrow \bar{x}$, $v \uparrow \bar{x}$};\\\label{eq: delta-o-}
    \bar{d}^{+} - \delta_f(u, v) = o(1) \quad \mbox{for all $u \downarrow \bar{x}$, $v \downarrow \bar{x}$};
    \end{eqnarray}
    moreover, if $f(\cdot)$ is $\gamma$-order semismooth at $\bar{x}$ for some $\gamma > 0$, then
    \begin{eqnarray}
    \bar{d}^{-} - \delta_f(u, v) = O(|u - \bar{x}|^{\gamma} + |v - \bar{x}|^{\gamma}) \quad \mbox{for all $u \uparrow \bar{x}$, $v \uparrow \bar{x}$};\\
    \bar{d}^{+} - \delta_f(u, v) = O(|u - \bar{x}|^{\gamma} + |v - \bar{x}|^{\gamma}) \quad \mbox{for all $u \downarrow \bar{x}$, $v \downarrow \bar{x}$}.
    \end{eqnarray}
\end{romannum}
\end{lemma}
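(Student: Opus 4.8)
The plan is to handle the two parts separately: part~(i) is quoted from \cite[Lemma 2.2]{Potra98}, so I would just recall its short argument, and part~(ii) I would prove by adapting the proof of \cite[Lemma 2.3]{Potra98}. Throughout, write $\bar d^{+}$ and $\bar d^{-}$ for the ordinary right and left one-sided derivatives of $f$ at $\bar x$, so that $\bar d^{+}=f'(\bar x;1)$ and $\bar d^{-}=-f'(\bar x;-1)$; these exist because $f$, being semismooth at $\bar x$, is directionally differentiable there, and for a scalar function the directional derivatives in the directions $\pm 1$ are exactly the lateral derivatives.

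For part~(i): take any $x^{k}\to\bar x$ with $x^{k}\in D_{f}$ and $f'(x^{k})\to V$, and (passing to a subsequence) assume all $x^{k}$ lie on one side of $\bar x$, say $x^{k}\downarrow\bar x$. With $\Delta_{k}:=x^{k}-\bar x$, the semismoothness of $f$ at $\bar x$ applied with $f'(x^{k})\in\partial f(x^{k})$ gives $f(x^{k})-f(\bar x)-f'(x^{k})\Delta_{k}=o(|\Delta_{k}|)$, while directional differentiability gives $f(x^{k})-f(\bar x)-\bar d^{+}\Delta_{k}=o(|\Delta_{k}|)$; subtracting and dividing by $\Delta_{k}$ yields $f'(x^{k})\to\bar d^{+}$, so $V=\bar d^{+}$, and the case $x^{k}\uparrow\bar x$ is identical with $\bar d^{-}$. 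Conversely, $f$ is locally Lipschitz, hence differentiable almost everywhere by Rademacher's theorem, so $D_{f}$ meets every one-sided neighbourhood of $\bar x$; running the previous computation along $x^{k}\in D_{f}$ with $x^{k}\downarrow\bar x$ (resp.\ $x^{k}\uparrow\bar x$) shows $\bar d^{+}\in\partial_{\rm B}f(\bar x)$ (resp.\ $\bar d^{-}\in\partial_{\rm B}f(\bar x)$), giving $\partial_{\rm B}f(\bar x)=\{\bar d^{+},\bar d^{-}\}$.

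For part~(ii) the crucial observation, and where the naive approach breaks down, is that Taylor expanding $f(u)$ and $f(v)$ about $\bar x$ and subtracting does \emph{not} suffice: $u-v$ can be arbitrarily small compared with $|u-\bar x|+|v-\bar x|$, so dividing the error by $u-v$ need not leave an $o(1)$ term. Instead I would use that $f$, being locally Lipschitz, is absolutely continuous near $\bar x$, so for $v<u$ close to $\bar x$,
\[
\delta_{f}(u,v)=\frac{1}{u-v}\int_{v}^{u}f'(t)\,dt ,
\]
i.e.\ $\delta_{f}(u,v)$ is an average of values of $f'$ over $[v,u]$. It then suffices to show $f'(t)\to\bar d^{-}$ as $t\uparrow\bar x$ through points of differentiability. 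To see this, fix $t<\bar x$ with $t\in D_{f}$: semismoothness at $\bar x$ (with $f'(t)\in\partial f(t)$) gives $f(t)-f(\bar x)-f'(t)(t-\bar x)=o(|t-\bar x|)$, while directional differentiability gives $f(t)-f(\bar x)-\bar d^{-}(t-\bar x)=o(|t-\bar x|)$ (for scalar $\Delta<0$ one has $f'(\bar x;\Delta)=\bar d^{-}\Delta$); subtracting and dividing by $t-\bar x$ gives $f'(t)-\bar d^{-}=o(1)$. Hence, given $\varepsilon>0$ there is $\delta>0$ with $|f'(t)-\bar d^{-}|<\varepsilon$ for a.e.\ $t\in(\bar x-\delta,\bar x)$, and then $|\delta_{f}(u,v)-\bar d^{-}|\le\frac{1}{u-v}\int_{v}^{u}|f'(t)-\bar d^{-}|\,dt\le\varepsilon$ for all $v<u$ in $(\bar x-\delta,\bar x)$, which is the first estimate in \cref{eq: delta-o-}; the statement for $u,v\downarrow\bar x$ is entirely symmetric with $\bar d^{+}$.

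For the $\gamma$-order refinement the same scheme applies with every $o(\cdot)$ replaced by $O(\cdot)$: $\gamma$-order semismoothness at $\bar x$ gives $f(t)-f(\bar x)-f'(t)(t-\bar x)=O(|t-\bar x|^{1+\gamma})$, and the scalar form of $\gamma$-order semismoothness gives the one-sided expansion $f(t)-f(\bar x)-\bar d^{-}(t-\bar x)=O(|t-\bar x|^{1+\gamma})$, so $|f'(t)-\bar d^{-}|=O(|t-\bar x|^{\gamma})$ as $t\uparrow\bar x$; averaging over $[v,u]\subseteq(\bar x-\delta,\bar x)$ and using $|t-\bar x|\le|v-\bar x|$ there gives $|\delta_{f}(u,v)-\bar d^{-}|\le\frac{1}{u-v}\int_{v}^{u}|f'(t)-\bar d^{-}|\,dt=O(|v-\bar x|^{\gamma})\le O(|u-\bar x|^{\gamma}+|v-\bar x|^{\gamma})$, and symmetrically on the right. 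The only point that needs care is the passage from the multifunction formulation of ($\gamma$-order) G-semismoothness in \cref{def: semismooth} to the one-sided Taylor expansions of $f$ about $\bar x$ used above; in the scalar setting this is the standard first-order expansion attached to (strong, $\gamma$-order) semismoothness, and the $o(\cdot)$ version of it is automatic from directional differentiability.
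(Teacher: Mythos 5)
Your proof is correct, and since the paper itself supplies no argument for this lemma (it simply points to \cite{Potra98}, quoting Lemma 2.2 there for part (i) and asking the reader to adapt the proof of Lemma 2.3 for part (ii)), what you have written is a legitimate self-contained reconstruction rather than a divergence from the paper. Your part (i) is the standard argument. For part (ii), you correctly identify why the naive two-point Taylor expansion fails ($u-v$ may be much smaller than $|u-\bar x|+|v-\bar x|$) and replace it by the representation of $\delta_f(u,v)$ as an average of $f'$ over $[v,u]$; the route in \cite{Potra98} is essentially the same idea realized through Lebourg's mean value theorem ($\delta_f(u,v)\in\partial f(\theta)$ for some $\theta$ between $u$ and $v$), and either device works because both reduce the claim to the one-sided convergence (resp.\ rate of convergence) of generalized derivative values to the lateral derivative, which is exactly what (G-)semismoothness at $\bar x$ provides, uniformly in the choice of $V\in\partial f(\bar x+\Delta)$. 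The only step you leave to citation is the one-sided expansion $f(\bar x+\Delta)-f(\bar x)-f'(\bar x;\Delta)=O(|\Delta|^{1+\gamma})$ under $\gamma$-order semismoothness; this is indeed standard (Qi--Sun; see also \cite[Proposition 7.4.10]{Facchinei03} for the semismooth and strongly semismooth cases), and in the scalar setting it can be proved directly and non-circularly: with $g(t)=f(\bar x+t)-f(\bar x)$ one has $|g(t)-tg'(t)|\le Ct^{1+\gamma}$ for a.e.\ small $t>0$, hence $|(g(t)/t)'|\le Ct^{\gamma-1}$ a.e., and integrating and letting the lower endpoint tend to $0$ gives $|g(t)/t-\bar d^{+}|\le (C/\gamma)t^{\gamma}$ (and symmetrically on the left). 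With that observation included or cited, your argument is complete.
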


\section{Properties of the value function $\varphi(\cdot)$}
\label{sec: property-value-function}
In this section, we first discuss some useful properties of the function $\varphi(\cdot)$. Since $p(\cdot)$ is assumed to be a nonnegative convex function with $p(0) = 0$, we know that $0 \in \partial p(0)$.

\begin{proposition}
\label{prop:value-fun-lst18}
Assume that $0 < \lambda_{\infty} < +\infty$. It holds that
\begin{romannum}
    \item for all $\lambda \geq \lambda_{\infty}$, $y(\lambda) = b$ and $0 \in \Omega(\lambda)$;
    \item the value function $\varphi(\cdot)$ is nondecreasing on $(0, +\infty)$ and for any $0 < \lambda_1 < \lambda_2 < +\infty$, $\varphi(\lambda_1) = \varphi(\lambda_2)$ implies $p(x(\lambda_1)) = p(x(\lambda_2))$, where for any $\lambda > 0$, $x(\lambda)$ is an optimal solution to \cref{eq: reg-LS-form}.
\end{romannum}
\end{proposition}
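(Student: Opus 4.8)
For part (i), the plan is to exploit the characterization of $\lambda_\infty = \Upsilon(A^Tb \mid \partial p(0))$. Recall that $x = 0$ solves \ref{eq: reg-LS-form} if and only if the optimality condition $0 \in A^T(A\cdot 0 - b) + \lambda \partial p(0)$ holds, i.e. $A^Tb \in \lambda \partial p(0)$. Since $\partial p(0)$ is the (closed convex) set whose gauge is $\Upsilon(\cdot \mid \partial p(0))$, the condition $A^Tb \in \lambda\partial p(0)$ is equivalent to $\Upsilon(A^Tb \mid \partial p(0)) \le \lambda$, that is, $\lambda \ge \lambda_\infty$. Hence for all $\lambda \ge \lambda_\infty$ we have $0 \in \Omega(\lambda)$, and then from the first KKT relation $y(\lambda) = b - Ax(\lambda) = b - A\cdot 0 = b$; uniqueness of the dual solution $(y(\lambda),u(\lambda))$ (noted in the introduction) makes this unambiguous. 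I would need to be slightly careful about whether $\partial p(0)$ is closed (it is, as a subdifferential of a closed convex function) and that its gauge is exactly $\Upsilon(\cdot \mid \partial p(0))$, so that membership and the gauge-inequality are genuinely equivalent.

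For part (ii), I would first establish monotonicity of $\varphi$. Fix $\lambda_1 > \lambda_2 > 0$ and let $x_i := x(\lambda_i) \in \Omega(\lambda_i)$. Write $r_i := \tfrac12\|Ax_i - b\|^2$ and $p_i := p(x_i)$. Optimality of $x_i$ for \ref{eq: reg-LS-form} with parameter $\lambda_i$ gives the two inequalities
\[
r_1 + \lambda_1 p_1 \le r_2 + \lambda_1 p_2, \qquad r_2 + \lambda_2 p_2 \le r_1 + \lambda_2 p_1.
\]
Adding them yields $(\lambda_1 - \lambda_2)(p_1 - p_2) \le 0$, so $p_1 \le p_2$: the optimal $p$-value is nonincreasing in $\lambda$. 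Substituting $p_1 \le p_2$ back into the second inequality gives $r_2 \le r_1 + \lambda_2(p_1 - p_2) \le r_1$, hence $\|Ax_2 - b\| \le \|Ax_1 - b\|$, i.e. $\varphi(\lambda_2) \le \varphi(\lambda_1)$. This is exactly nondecreasing monotonicity, and since $b - Ax(\lambda)$ is invariant over $\Omega(\lambda)$ the value $\varphi(\lambda)$ is well defined independent of the chosen representative.

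For the implication $\varphi(\lambda_1) = \varphi(\lambda_2) \Rightarrow p(x(\lambda_1)) = p(x(\lambda_2))$ under $\lambda_1 > \lambda_2 > 0$, note $\varphi(\lambda_1) = \varphi(\lambda_2)$ forces $r_1 = r_2$. Feeding $r_1 = r_2$ into the first optimality inequality gives $\lambda_1 p_1 \le \lambda_1 p_2$, so $p_1 \le p_2$; feeding it into the second gives $\lambda_2 p_2 \le \lambda_2 p_1$, so $p_2 \le p_1$. Therefore $p_1 = p_2$, i.e. $p(x(\lambda_1)) = p(x(\lambda_2))$. (Again one should remark that although $x(\lambda_i)$ need not be unique, the residual $Ax(\lambda_i) - b$ is, so $r_i$ is well defined; and once $r_1 = r_2$ the argument pins down $p_i$ for \emph{every} optimal solution, which is the cleanest way to phrase the conclusion.)

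The main obstacle I anticipate is not in part (ii) — that is a routine exchange-of-optimality-inequalities argument — but in making part (i) fully rigorous: one must justify the equivalence between the subgradient inclusion $A^Tb \in \lambda \partial p(0)$ and the gauge inequality $\lambda \ge \lambda_\infty$, which requires that $\partial p(0)$ be a nonempty closed convex set containing the origin (so that its gauge is finite and the inclusion $y \in \nu C$ for the defining $\nu = \Upsilon(y \mid C)$ actually holds, i.e. the infimum in the gauge is attained). This attainment is what lets us pass from $\Upsilon(A^Tb\mid\partial p(0)) \le \lambda$ to $A^Tb \in \lambda\partial p(0)$ rather than only to closure statements; it is standard (see \cite[Section 15]{rockafellar1970convex}) but deserves an explicit sentence.
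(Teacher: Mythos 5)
Your proposal is correct and follows essentially the same route as the paper: part (i) via the optimality condition $A^Tb \in \lambda\,\partial p(0)$ at $x=0$ together with closedness of $\partial p(0)$ to handle $\lambda = \lambda_\infty$, and part (ii) via the standard exchange of the two optimality inequalities (the paper adds them to compare $p$-values first and then feeds back into one inequality, while you also derive the equality case directly from $r_1 = r_2$, which is an equivalent minor rearrangement). No gaps.
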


\begin{proof}
(i) Since $0 \in \partial p(0)$, for all $\lambda > \lambda_{\infty}$, it holds that
\[A^T b/\lambda \in \partial p(0),\]
which implies that $0 \in \Omega(\lambda)$. Since $\lambda_{\infty} > 0$ and $\partial p(0)$ is closed, we know
\[
A^Tb/\lambda_{\infty} \in \partial p(0),
\]
which implies that $0 \in \Omega(\lambda_{\infty})$. Therefore, for all $\lambda \geq \lambda_{\infty}$, $0 \in \Omega(\lambda)$ and $y(\lambda) = b$ .

(ii) Let $0 < \lambda_1 < \lambda_2 < \infty$ be arbitrarily chosen. Let $x(\lambda_1) \in \Omega(\lambda_1)$ and $x(\lambda_2) \in \Omega(\lambda_2)$. Then, we have
\begin{eqnarray}\label{ineq: part1}
\frac{1}{2}\|Ax(\lambda_1) - b\|^2 + \lambda_1 p(x(\lambda_1)) &\leq& \frac{1}{2}\|Ax(\lambda_2) - b\|^2 + \lambda_1 p(x(\lambda_2)),\\ \label{ineq: part2}
\frac{1}{2}\|Ax(\lambda_2) - b\|^2 + \lambda_2 p(x(\lambda_2)) &\leq& \frac{1}{2}\|Ax(\lambda_1) - b\|^2 + \lambda_2 p(x(\lambda_1)),
\end{eqnarray}
which implies that
\begin{equation}
(\lambda_1 - \lambda_2)(p(x(\lambda_1)) - p(x(\lambda_2))) \leq 0.
\end{equation}
Since $\lambda_1 - \lambda_2 <0$, we know that $p(x(\lambda_1)) \geq p(x(\lambda_2))$. It follows from \eqref{ineq: part1} that
\[
\frac{1}{2}\|Ax(\lambda_1) - b\|^2 \leq \frac{1}{2}\|Ax(\lambda_2) - b\|^2 + \lambda_1(p(x(\lambda_2)) - p(x(\lambda_1))) \leq \frac{1}{2}\|Ax(\lambda_2) - b\|^2,
\]
which implies that $p(x(\lambda_1)) = p(x(\lambda_2))$ if $\varphi(\lambda_1) = \varphi(\lambda_2)$. This completes the proof of the proposition.
\end{proof}

Due to \cref{prop:value-fun-lst18}, we can apply the bisection method to solve \cref{eq: root-finding-levelset-lst} and for any $\epsilon > 0$ we can obtain a solution $\lambda_{\varepsilon}$ satisfying $|\lambda_{\varepsilon} - \lambda^*| \leq \varepsilon$ in $O(\log(1/\varepsilon))$ iterations, where $\lambda^*$ is a solution to \cref{eq: root-finding-levelset-lst}. In this paper, we will design a more efficient secant method for solving \cref{eq: root-finding-levelset-lst}. To achieve this goal, we first study the (strong) semismoothness property of $\varphi(\cdot)$.

In this paper, we focus on the case where $p(\cdot)$ is a gauge function.
In most of the applications, $p(\cdot)$ is a norm {function}, which is {automatically} a gauge function.
We will leave the study of the (strong) semismoothness of $\varphi(\cdot)$ for a general $p(\cdot)$ as future work. Because of its repeated occurrence in this section, we present the following assumption:

\textbf{Assumption A}:
    Assume that $p(\cdot)$ is a gauge function.

Under Assumption A, $p^*(\cdot) = \delta(\cdot\,|\,\partial p(0))$ and the optimization problem \cref{eq: dual-LS-reg} is equivalent to
\begin{equation}
\label{eq: dual-LS-reg-gauge}
 \max_{y \in \mathbb{R}^m} \left\{-\frac{1}{2}\|y\|^2 + \langle b, y \rangle \ |\ \lambda^{-1} y \in Q\right\},
\end{equation}
where
\begin{equation}
\label{def: set-Q}
Q := \{z \in \mathbb{R}^m \ |\ A^Tz \in \partial p(0)\}.
\end{equation}
Then by performing a variable substitution, we have the following useful observation about the solution mapping to \cref{eq: dual-LS-reg-gauge}.
\begin{proposition}
\label{prop: dual-projection}
Under Assumption A, for any $0 < \lambda < +\infty$, the unique solution to \cref{eq: dual-LS-reg-gauge} can be written as
\begin{equation}
\label{solution-projection-y}
y(\lambda) = \lambda \, \Pi_{Q}(\lambda^{-1}b) = \Pi_{\lambda Q}(b).
\end{equation}
\end{proposition}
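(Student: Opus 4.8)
The plan is to reduce the dual problem \cref{eq: dual-LS-reg-gauge} to a Euclidean projection by a change of variables followed by completing the square. First I would record that the set $Q$ defined in \cref{def: set-Q} is a nonempty closed convex subset of $\mathbb{R}^m$: it is the preimage of the closed convex set $\partial p(0)$ under the linear map $z \mapsto A^T z$, hence closed and convex, and it contains the origin because $0 \in \partial p(0)$ (recall $p(\cdot)$ is nonnegative with $p(0)=0$). Consequently the metric projection $\Pi_Q(\cdot)$ is well defined and single-valued on all of $\mathbb{R}^m$. This is also the step that relies on the identity $p^*(\cdot)=\delta(\cdot\,|\,\partial p(0))$ valid for gauge functions, which is recorded just before the statement and is precisely what collapses the feasible set of \cref{eq: dual-LS-reg} to $\lambda Q$.

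Next I would perform the substitution $z = \lambda^{-1} y$ in \cref{eq: dual-LS-reg-gauge}. The constraint $\lambda^{-1}y \in Q$ becomes $z \in Q$, and the objective becomes $-\tfrac{\lambda^2}{2}\|z\|^2 + \lambda\langle b, z\rangle$. Since $\lambda > 0$, dividing the objective by $\lambda^2$ does not change the set of maximizers, so the problem is equivalent to maximizing $-\tfrac{1}{2}\|z\|^2 + \langle \lambda^{-1}b, z\rangle$ over $z \in Q$. Completing the square, $-\tfrac{1}{2}\|z\|^2 + \langle \lambda^{-1}b, z\rangle = -\tfrac{1}{2}\|z - \lambda^{-1}b\|^2 + \tfrac{1}{2}\|\lambda^{-1}b\|^2$, so maximizing this over $z \in Q$ is the same as minimizing $\|z - \lambda^{-1}b\|^2$ over $z \in Q$, whose unique solution is $z^\star = \Pi_Q(\lambda^{-1}b)$. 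Undoing the substitution gives $y(\lambda) = \lambda z^\star = \lambda\,\Pi_Q(\lambda^{-1}b)$, which is the claimed formula. Uniqueness of the dual solution — already noted in the discussion preceding \cref{eq: root-finding-levelset-lst}, and in any case immediate from the strict concavity of $y \mapsto -\tfrac{1}{2}\|y\|^2 + \langle b, y\rangle$ together with convexity of $\lambda Q$ — guarantees that this is the solution, not merely a solution.

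I do not anticipate a genuine obstacle here; the argument is essentially a one-line completing-the-square computation once the change of variables is in place. The only points requiring a sentence of care are (a) confirming $Q \neq \emptyset$, so that the projection is meaningful, which follows from $0 \in \partial p(0)$, and (b) being explicit that rescaling a strictly concave quadratic objective by the positive constant $\lambda^{2}$ preserves the argmax, so that the equivalence with the projection problem is exact rather than approximate.
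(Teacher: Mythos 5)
Your proof is correct and follows exactly the route the paper intends: the paper omits a formal proof, noting only that the result follows ``by performing a variable substitution,'' and your substitution $z=\lambda^{-1}y$ followed by completing the square is precisely that argument, with the well-definedness of $\Pi_Q$ and the uniqueness of the maximizer handled appropriately. No gaps.
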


The following proposition is useful in understanding the semismoothness of $y(\cdot)$ and $\varphi(\cdot)$ even if $p$ is non-polyhedral. The definition of a tame set and a globally subanalytic set can be found in \cite[Definition 2]{Bolte09} and \cite[Example 2(a)]{Bolte09}, respectively.
Part (ii) of the proposition is generalized from \cite[Proposition 1 (iv)]{li2018efficiently} ($p(\cdot)$ is assumed to be a polyhedral gauge function in \cite{li2018efficiently}) and we provide a more explicit proof that does not rely on the piecewise linearity of the solution mapping $y(\cdot)$ in \cref{solution-projection-y}.
\begin{proposition}
\label{prop: strict-monotonicity-tame}
Under Assumption A, it holds that
\begin{romannum}
    \item the functions $y(\cdot)$ and $\varphi(\cdot)$ are locally Lipschitz continuous on $(0, +\infty)$;
    \item if $0 < \lambda_{\infty} < +\infty$, $\varphi(\cdot)$ is strictly increasing on $(0, \lambda_{\infty}]$;
    \item if the set $Q$ is tame, $\varphi(\cdot)$ is semismooth on $(0, +\infty)$;
    \item if $Q$ is globally subanalytic, $\varphi(\cdot)$ is $\gamma$-order semismooth on $(0, +\infty)$ for some $\gamma > 0$.
\end{romannum}
\end{proposition}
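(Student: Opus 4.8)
The plan is to base everything on the identity
$\varphi(\lambda)=\|y(\lambda)\|=\lambda\,\|\Pi_{Q}(\lambda^{-1}b)\|$ for $\lambda>0$, where the first equality comes from the KKT relation $y(\lambda)-b+Ax(\lambda)=0$ in \cref{eq: KKT-PD} (so that $Ax(\lambda)-b=-y(\lambda)$) and the second is \cref{prop: dual-projection}. Part (i) then follows quickly: on any compact subinterval $[\alpha_1,\alpha_2]\subset(0,+\infty)$ the map $\lambda\mapsto\lambda^{-1}b$ is smooth with bounded derivative, $\Pi_{Q}(\cdot)$ is nonexpansive, a product of two bounded locally Lipschitz maps is locally Lipschitz, and $\|\cdot\|$ is $1$-Lipschitz; composing these gives local Lipschitz continuity of $y(\cdot)$ and of $\varphi(\cdot)$ on $(0,+\infty)$.

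For part (ii) we already know from \cref{prop:value-fun-lst18}(ii) that $\varphi(\cdot)$ is nondecreasing, so it suffices to exclude a plateau. Suppose, for contradiction, that $\varphi(\lambda_2)=\varphi(\lambda_1)=:c$ for some $0<\lambda_2<\lambda_1\le\lambda_\infty$; by monotonicity $\varphi\equiv c$ on $[\lambda_2,\lambda_1]$. Then \cref{prop:value-fun-lst18}(ii) forces $p(x(\lambda))$ to take a common value $t_0$ on $[\lambda_2,\lambda_1]$, so the optimal value of \cref{eq: reg-LS-form} there equals $\tfrac12c^{2}+\lambda t_0$; hence, for any $\lambda,\lambda'\in[\lambda_2,\lambda_1]$, the point $x(\lambda')$ is feasible for \cref{eq: reg-LS-form} at parameter $\lambda$ with objective $\tfrac12c^{2}+\lambda t_0$ and therefore lies in $\Omega(\lambda)$. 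Fixing $\bar x:=x(\lambda_2)$, the first-order optimality condition $A^{T}(b-A\bar x)\in\lambda\,\partial p(\bar x)$ holds for every $\lambda\in[\lambda_2,\lambda_1]$. Using the gauge identity $\partial p(\bar x)=\{u\in\partial p(0)\mid\langle u,\bar x\rangle=p(\bar x)\}$, a consequence of $p^{*}(\cdot)=\delta(\cdot\,|\,\partial p(0))$, we get $\langle A^{T}(b-A\bar x),\bar x\rangle=\lambda\,t_0$ at two distinct values of $\lambda$, which forces $t_0=0$, i.e.\ $p(\bar x)=0$. Since $p\ge0$, optimality of $\bar x$ at $\lambda_2$ propagates to every $\lambda\ge\lambda_2$; in particular $\bar x\in\Omega(\lambda_\infty)$, where also $0\in\Omega(\lambda_\infty)$ by \cref{prop:value-fun-lst18}(i), so the invariance of $b-Ax$ over $\Omega(\lambda_\infty)$ gives $A\bar x=0$. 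Since moreover $p(\bar x)=0$, the objective value of \cref{eq: reg-LS-form} at $\bar x$ for $\lambda_2$ is $\tfrac12\|b\|^{2}$, which is also the objective value at $x=0$; hence $0\in\Omega(\lambda_2)$ and thus $\lambda_2\ge\lambda_\infty$ — contradicting $\lambda_2<\lambda_1\le\lambda_\infty$. The same argument with $\lambda_1=\lambda_\infty$ allowed, together with $\varphi(\lambda_\infty)=\|b\|$, yields strict monotonicity on the closed interval $(0,\lambda_\infty]$.

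For parts (iii) and (iv) we again exploit $\varphi(\lambda)=\lambda\,\|\Pi_{Q}(\lambda^{-1}b)\|$. When $Q$ is tame (resp.\ globally subanalytic), the projection $\Pi_{Q}(\cdot)$, being characterized by the first-order condition $z=\Pi_{Q}(w)\iff z\in Q,\ \langle w-z,\,q-z\rangle\le0\ \forall q\in Q$, is itself tame (resp.\ globally subanalytic) as well as (globally Lipschitz, hence) locally Lipschitz; by standard results on semismoothness of definable locally Lipschitz functions (see, e.g., \cite{Bolte09}), $\Pi_{Q}(\cdot)$ is then semismooth (resp.\ $\gamma$-order semismooth for some $\gamma>0$). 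Since $\lambda\mapsto\lambda^{-1}b$ is real analytic on $(0,+\infty)$, $\|\cdot\|$ is strongly semismooth, and multiplication is bilinear hence smooth, and since semismoothness of the relevant orders is stable under composition and products of locally Lipschitz maps with respect to the naturally associated multifunctions, the displayed representation shows that $\varphi(\cdot)$ is semismooth in case (iii) and $\gamma$-order semismooth for some $\gamma>0$ in case (iv).

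The crux is part (ii). A ``soft'' route — the optimal value $g(\lambda)$ of \cref{eq: reg-LS-form} is concave, nondecreasing, and satisfies $g'(\lambda)=p(x(\lambda))$ — only recovers monotonicity of $\varphi(\cdot)$: it cannot exclude a plateau, because $g(\cdot)$ may keep strictly increasing through the term $\lambda\,p(x(\lambda))$ even when $\varphi(\cdot)$ and $p(x(\cdot))$ are both locally constant. What breaks the deadlock is the gauge structure, which is precisely what converts ``$\bar x$ is optimal for a whole interval of $\lambda$'' into the scalar identity $\langle A^{T}(b-A\bar x),\bar x\rangle=\lambda\,p(\bar x)$, and hence into $p(\bar x)=0$. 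The remaining care points are routine: checking that $\Pi_{Q}$ inherits tameness/subanalyticity from $Q$, and keeping the chain-rule bookkeeping for G-semismoothness straight.
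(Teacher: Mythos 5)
Your proposal is correct. Parts (i), (iii) and (iv) follow essentially the paper's own route: local Lipschitz continuity from the representation $y(\lambda)=\lambda\,\Pi_{Q}(\lambda^{-1}b)$ and the nonexpansiveness of $\Pi_Q$, and (strong/$\gamma$-order) semismoothness from the Bolte--Daniilidis--Lewis results for tame, respectively globally subanalytic, locally Lipschitz maps combined with composition/product rules. Part (ii), the crux, is where you genuinely diverge. The paper stays on the dual side: from $\varphi(\lambda_1)=\varphi(\lambda_2)$ and \cref{prop:value-fun-lst18}(ii) it deduces cross-optimality and hence $y(\lambda_1)=y(\lambda_2)$, rewrites the constraint of \cref{eq: dual-LS-reg-gauge} as $p^{\circ}(A^Ty)\le\lambda$, notes that this constraint is then slack at the larger parameter, so the unconstrained maximizer $y=b$ solves the dual there, which forces $\lambda_\infty=p^{\circ}(A^Tb)\le\lambda_1<\lambda_2\le\lambda_\infty$, a contradiction. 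You instead argue on the primal side: constancy of $p(x(\cdot))$ makes one point $\bar x$ optimal for \cref{eq: reg-LS-form} at two distinct parameters, the gauge identity $\partial p(\bar x)=\{u\in\partial p(0)\mid\langle u,\bar x\rangle=p(\bar x)\}$ converts this into $\langle A^T(b-A\bar x),\bar x\rangle=\lambda\,p(\bar x)$ at two values of $\lambda$, hence $p(\bar x)=0$; optimality then propagates to all larger $\lambda$, the invariance of $b-Ax$ over $\Omega(\lambda_\infty)$ (where $0$ is also optimal) gives $A\bar x=0$, so $0\in\Omega(\lambda_2)$ and $\lambda_2\ge\lambda_\infty$, again a contradiction. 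Both arguments hinge on the gauge structure and on the standing assumption that the KKT system is solvable for every $\lambda>0$ (you use it through the uniqueness of $y(\lambda)$, i.e. the invariance of $b-Ax$ over the solution set); the paper's dual route is a bit shorter and needs neither the plateau nor the step at $\lambda_\infty$, while your primal route makes explicit how a plateau would force the degenerate configuration $p(\bar x)=0$, $A\bar x=0$, which some readers may find more transparent.
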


\begin{proof}
For convenience, we denote $\tilde{y}(\lambda) := \Pi_{Q}(\lambda^{-1}b)$ for any $\lambda > 0$.

(i) Since $\Pi_{Q}(\cdot)$ is Lipschitz continuous with modulus $1$, both $\tilde{y}(\cdot)$ and $y(\cdot)$ are locally Lipschitz continuous on $(0, +\infty)$. Therefore, $\varphi(\cdot) = \|y(\cdot)\|$ is locally Lipschitz continuous on $(0, +\infty)$.

(ii) It follows from \cref{prop:value-fun-lst18} that $\varphi(\cdot)$ is nondecreasing. We will now prove that $\varphi(\cdot)$ is strictly increasing on $(0, \lambda_{\infty}]$. We prove it by contradiction. Assume that there exist $0 < \lambda_1 < \lambda_2 \leq \lambda_\infty$ such that $\varphi(\lambda_1) = \varphi(\lambda_2)$. Let $x(\lambda_1) \in \Omega(\lambda_1)$ and $x(\lambda_2) \in \Omega(\lambda_2)$ be arbitrarily chosen. From \cref{prop:value-fun-lst18} (ii), we know that
\[
	p(x(\lambda_1)) = p(x(\lambda_2)),
\]
which implies that $x(\lambda_1) \in \Omega(\lambda_2)$ and $x(\lambda_2) \in \Omega(\lambda_1)$. Therefore, we get
\[
y(\lambda_1) = b - Ax(\lambda_1) = b - Ax(\lambda_2) = y(\lambda_2).
\]
Thus, by using the facts $0 \in \lambda_2 Q$, $y(\lambda_2) = \Pi_{\lambda_2Q}(b)$, and $\lambda_1^{-1}\lambda_2y(\lambda_2) = \lambda_1^{-1}\lambda_2y(\lambda_1) \in \lambda_2Q$, we obtain from the properties of the metric projector $\Pi_{\lambda_2 Q}(\cdot)$ that 
\[
\langle b - y(\lambda_2), 0 - y(\lambda_2) \rangle \leq 0, \quad \langle b - y(\lambda_2), \lambda_1^{-1}\lambda_2y(\lambda_2) - y(\lambda_2) \rangle \leq 0.
\]
Therefore,
\[
\langle b - y(\lambda_1), y(\lambda_1) \rangle = \langle b - y(\lambda_2), y(\lambda_2) \rangle = 0.
\]
Since $\lambda_1 < \lambda_{\infty}$ and $y(\lambda_1) = \Pi_{\lambda_1Q}(b)$, we know that $y(\lambda_1) \neq b$. Hence,
\[
\langle b - y(\lambda_1), \lambda_{\infty}^{-1}\lambda_1b - y(\lambda_1)\rangle = \lambda_{\infty}^{-1}\lambda_1\|b - y(\lambda_1)\|^2 > 0.
\]
However, $\lambda_{\infty}^{-1}\lambda_1b \in \lambda_1Q$ and $y(\lambda_1) = \Pi_{\lambda_1Q}(b)$ imply that
\[
\langle b - y(\lambda_1), \lambda_{\infty}^{-1}\lambda_1b - y(\lambda_1) \rangle \leq 0,
\]
which is a contradiction. This contradiction shows that $\varphi(\cdot)$ is strictly increasing on $(0, \lambda_{\infty}]$.

(iii) Since $\tilde{y}(\cdot)$ is locally Lipschitz continuous on $(0, \lambda_{\infty})$, it follows from \cite{Bolte09} that $\tilde{y}(\cdot)$ is semismooth on $(0, +\infty)$ if $Q$ is a tame set. Since $\|\cdot\|$ is strongly semismooth, we know that $g_1(\cdot) = \|\tilde{y}(\cdot)\|$ is semismooth on $(0, \lambda_{\infty})$. Therefore, $\varphi(\cdot)$ is semismooth on $(0, \lambda_{\infty})$ \cite[Proposition 7.4.4, Proposition 7.4.8]{Facchinei03}.

(iv) The $\gamma$-order semismoothness of $\varphi(\cdot)$ can be proved similarly as for (iii).
\end{proof}

The above proposition can be used to prove the semismoothness of $\varphi(\cdot)$ for a wide class of {functions} $p(\cdot)$. For example, the next corollary shows the semismoothness of $\varphi(\cdot)$ when $p(\cdot)$ is the nuclear norm function defined on $\mathbb{R}^{d \times n}$, using the fact that $\partial p(0)$ is linear matrix inequality representable \cite{Fazel10nuclear}.

\begin{corollary}
Denote the adjoint of the linear operator $\mathcal{A}: \mathbb{R}^{d \times n} \to \mathbb{R}^m$ as $\mathcal{A}^*$. Let $p(\cdot) = \|\cdot\|_{*}$ be the nuclear norm function defined on $\mathbb{R}^{d \times n}$. Then $Q = \{z \in \mathbb{R}^m \,|\, \mathcal{A}^*z \in \partial p(0)\}$ is a tame set and $\Pi_{Q}(\cdot)$ is semismooth.
\end{corollary}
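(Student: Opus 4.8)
The plan is to show that $Q$ is in fact a \emph{spectrahedron}, hence a semialgebraic set and therefore tame, and that the metric projection $\Pi_{Q}(\cdot)$ is a globally Lipschitz semialgebraic function, whence its semismoothness follows from \cite{Bolte09}. The starting point is to describe $\partial p(0)$ explicitly. Since $p(\cdot) = \|\cdot\|_{*}$ is a norm, it is a gauge function whose polar $p^{\circ}(\cdot)$ is the dual (spectral) norm $\sigma_{\max}(\cdot)$; hence, by \cite[Proposition 2.1 (iv)]{friedlander2014gauge},
\[
\partial p(0) = \{Y \in \mathbb{R}^{d \times n} \,|\, p^{\circ}(Y) \le 1\} = \{Y \in \mathbb{R}^{d \times n} \,|\, \sigma_{\max}(Y) \le 1\}.
\]

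Next I would use the linear matrix inequality representation of this set \cite{Fazel10nuclear}, namely
\[
\sigma_{\max}(Y) \le 1 \iff \begin{pmatrix} I_{d} & Y \\ Y^{T} & I_{n} \end{pmatrix} \succeq 0,
\]
so $\partial p(0)$ is a spectrahedron. Because $z \mapsto \mathcal{A}^{*}z$ is a linear map, substituting its entries into the above inequality presents $Q = (\mathcal{A}^{*})^{-1}(\partial p(0))$ again as a spectrahedron; in particular $Q$ is a closed convex semialgebraic set, and since $0 \in \partial p(0)$ it is nonempty. As every semialgebraic set is definable in the (o-minimal) structure of semialgebraic sets, $Q$ is tame in the sense of \cite{Bolte09}.

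For the projection mapping, since $Q$ is nonempty, closed and convex, $\Pi_{Q}(\cdot)$ is single-valued and globally Lipschitz continuous with modulus $1$. Its graph is
\[
\{(x,y) \,|\, y \in Q \ \text{ and }\ \|x - y\|^{2} \le \|x - w\|^{2} \ \text{ for all }\ w \in Q\},
\]
which, using that $Q$ is semialgebraic, is defined by a first-order formula over the reals involving only polynomial (in)equalities; by the Tarski--Seidenberg theorem this graph is semialgebraic, so $\Pi_{Q}(\cdot)$ is a semialgebraic function. Finally, a locally Lipschitz semialgebraic function is $\gamma$-order semismooth for some $\gamma > 0$ \cite{Bolte09}; in particular $\Pi_{Q}(\cdot)$ is semismooth, which, together with \cref{prop: strict-monotonicity-tame}, also yields that $\varphi(\cdot)$ is $\gamma$-order semismooth in this case.

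The routine parts are the duality identity and the Schur-complement manipulation behind the LMI. The point that requires care --- and the real content of the argument --- is that one must stay within the \emph{semialgebraic} category rather than invoke a generic o-minimal structure: eliminating the quantifier ``for all $w \in Q$'' in the description of the graph of $\Pi_{Q}(\cdot)$ is legitimate only because $Q$ is cut out by polynomial data, which is precisely what the explicit LMI representation of the spectral-norm ball supplies. Once $\Pi_{Q}(\cdot)$ is known to be semialgebraic and Lipschitz, the conclusion is immediate from \cite{Bolte09}.
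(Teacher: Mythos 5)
Your proof is correct and follows essentially the route the paper intends: it invokes the LMI (Schur-complement) representation of the spectral-norm ball from \cite{Fazel10nuclear} to exhibit $\partial p(0)$, and hence $Q=(\mathcal{A}^*)^{-1}(\partial p(0))$, as a semialgebraic (indeed spectrahedral) set, so that tameness of $Q$ and semismoothness of the Lipschitz projection $\Pi_Q(\cdot)$ follow from \cite{Bolte09}, exactly as \cref{prop: strict-monotonicity-tame} is set up to exploit. One small remark: your closing claim that one \emph{must} stay in the semialgebraic category to eliminate the quantifier in the graph of $\Pi_Q(\cdot)$ is overstated, since definable sets in any o-minimal structure are closed under projections and quantification, so the same argument works for any tame $Q$; semialgebraicity simply makes this concrete here (and in fact yields the stronger $\gamma$-order semismoothness, as you note).
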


Next, we will show that $\varphi(\cdot)$ can be strongly semismooth for a class of important instances of $p(\cdot)$.

\begin{proposition}
\label{prop: strong-semismoothness-varphi}
Define $\Phi(x) := \frac{1}{2}\|Ax - b\|^2, \, x \in \mathbb{R}^n$ and
\[
H(x, \lambda) := x - {\rm Prox}_{p}(x - \lambda^{-1}\nabla\Phi(x)), \quad (x, \lambda) \in \mathbb{R}^n \times \mathbb{R}_{++}.
\]
For any $(x, \lambda) \in \mathbb{R}^n \times \mathbb{R}_{++}$, denote $\partial_x H(x, \lambda)$ as the Canonical projection of $\partial H(x, \lambda)$ onto $\mathbb{R}^n$. Under Assumption A, it holds that
\begin{romannum}
    \item if $\Pi_{\partial p(0)}(\cdot)$ is strongly semismooth and $\partial_x H(\bar{x}, \bar{\lambda})$ is nondegenerate at some $(\bar{x}, \bar{\lambda})$ satisfying $H(\bar{x}, \bar{\lambda}) = 0$, then $y(\cdot)$ and $\varphi(\cdot)$ are strongly semismooth at $\bar{\lambda}$;
     \item if $p(\cdot)$ is further assumed to be polyhedral, the function $y(\cdot)$ is piecewise affine and $\varphi(\cdot)$ is strongly semismooth on $\mathbb{R}_{++}$.
\end{romannum}
\end{proposition}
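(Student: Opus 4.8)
The plan is to reduce both claims to known results about the proximal/projection operators by expressing the dual solution $y(\cdot)$ as a locally Lipschitz implicit function of $\lambda$ via the KKT map $H$. First I would observe that for a gauge function $p(\cdot)$ one has $\nabla\Phi(x) = A^T(Ax-b)$ and ${\rm Prox}_p$ is (nonexpansive and) strongly semismooth whenever $\Pi_{\partial p(0)}(\cdot)$ is, using the Moreau decomposition $p(x) + p^*(x) $-type identity ${\rm Prox}_p(x) = x - \Pi_{\partial p(0)}(x)$ valid here because $p^* = \delta(\cdot\,|\,\partial p(0))$. Hence $H(\cdot,\cdot)$ is strongly semismooth as a composition of strongly semismooth maps. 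For part (i), since $H(\bar x,\bar\lambda)=0$ and $\partial_x H(\bar x,\bar\lambda)$ is assumed nondegenerate (every element nonsingular), the semismooth implicit function theorem (e.g. \cite[Theorem 7.5.17]{Facchinei03} or the standard implicit-function-type result for semismooth equations) yields a locally Lipschitz, strongly semismooth function $\lambda \mapsto x(\lambda)$ near $\bar\lambda$ solving $H(x(\lambda),\lambda)=0$. Then $y(\lambda) = b - Ax(\lambda)$ is strongly semismooth at $\bar\lambda$ by composition with the affine map, and $\varphi(\lambda) = \|y(\lambda)\|$ is strongly semismooth at $\bar\lambda$ since $\|\cdot\|$ is strongly semismooth and the composition of strongly semismooth functions is strongly semismooth \cite[Proposition 7.4.4]{Facchinei03} — here we also use that $\varphi$ is directionally differentiable (it is locally Lipschitz and the composition of directionally differentiable maps), so G-semismoothness upgrades to semismoothness.

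For part (ii), when $p(\cdot)$ is a polyhedral gauge, $\partial p(0) = \{x \mid p^\circ(x)\le 1\}$ is a polyhedral convex set, so $\Pi_{\partial p(0)}(\cdot)$ is piecewise affine, hence ${\rm Prox}_p(\cdot)$ is piecewise affine, and therefore so is $H(\cdot,\cdot)$ in $(x,\lambda)$ after noting that the only nonlinearity in $\lambda$ enters through $\lambda^{-1}$; to handle that I would instead work directly with the projection characterization from \cref{prop: dual-projection}, namely $y(\lambda) = \lambda\,\Pi_Q(\lambda^{-1}b)$ with $Q = \{z \mid A^Tz \in \partial p(0)\}$ a polyhedral set. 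The composition $\lambda \mapsto \lambda^{-1}b \mapsto \Pi_Q(\lambda^{-1}b) \mapsto \lambda\,\Pi_Q(\lambda^{-1}b)$ shows that on $(0,\lambda_\infty)$, as $\lambda$ ranges over the (finitely many) regions where the active face of $Q$ is constant, $\Pi_Q(\lambda^{-1}b)$ is an affine function of $\lambda^{-1}b$, so $y(\lambda)$ restricted to each such region is of the form $\lambda M(\lambda^{-1}b) + \lambda c = Mb + \lambda c$ — i.e. affine in $\lambda$. Thus $y(\cdot)$ is continuous and piecewise affine on $\mathbb{R}_{++}$ (using \cref{prop:value-fun-lst18} (i) to cover $\lambda \ge \lambda_\infty$, where $y(\lambda)=b$). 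A continuous piecewise affine function is (globally Lipschitz and) strongly semismooth, so $\varphi = \|y(\cdot)\|$ is strongly semismooth on $\mathbb{R}_{++}$ by composition.

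The main obstacle I expect is the careful verification in part (i) that the semismooth implicit function theorem applies: one must check the nondegeneracy hypothesis is exactly the invertibility condition required, and that the selection of $\partial_x H$ as the canonical projection of $\partial H$ onto the $x$-block is compatible with the Clarke-Jacobian chain rule needed to conclude strong semismoothness of the implicit solution (the subtlety being that $\partial H$ need not split as a product over the $(x,\lambda)$ blocks, so one uses the projection $\partial_x H$ and an upper-semicontinuity/boundedness argument rather than an exact chain rule). A secondary technical point is confirming directional differentiability of $\varphi$ at $\bar\lambda$ so that G-semismoothness of the composition can be stated as semismoothness; this follows because each map in the composition chain is directionally differentiable (${\rm Prox}_p$ is, being Lipschitz and the gradient of a convex function; the implicit solution inherits it; and $\|\cdot\|$ is), but it should be spelled out. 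Everything else is routine composition of (strongly) semismooth maps together with the polyhedral structure already exploited in \cite{li2018efficiently}.
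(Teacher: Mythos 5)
Your proposal is correct and follows essentially the same route as the paper: for part (i), the Moreau identity converts ${\rm Prox}_p$ into $\Pi_{\partial p(0)}$ so that $H$ is strongly semismooth, and the implicit function theorem for semismooth equations (the paper cites Sun's and Meng--Sun--Zhao's versions, which are exactly the results matching the nondegeneracy of the canonical projection $\partial_x H$) gives the strong semismoothness of $x(\cdot)$, hence of $y(\cdot)=b-Ax(\cdot)$ and $\varphi(\cdot)=\|y(\cdot)\|$; for part (ii), the paper likewise argues via $y(\lambda)=\lambda\,\Pi_Q(\lambda^{-1}b)$ with $Q$ polyhedral and $\Pi_Q$ piecewise affine. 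Your explicit computation that on each piece $\lambda\,\Pi_Q(\lambda^{-1}b)=Mb+\lambda c$ is affine in $\lambda$ is a welcome elaboration of a step the paper states without detail, but it is not a different method.
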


\begin{proof}
(i) It follows from the Moreau identity \cite[Theorem 31.5]{rockafellar1970convex} that for any $(x, \lambda) \in \mathbb{R}^n \times \mathbb{R}_{++}$,
\[
\begin{array}{lcl}
H(x, \lambda) &=& x - ((x - \lambda^{-1}\nabla\Phi(x)) - {\rm Prox}_{p^*}(x - \lambda^{-1}\nabla\Phi(x)))\\
&=& \lambda^{-1}\nabla\Phi(x) + \Pi_{\partial p(0)}(x - \lambda^{-1}\nabla\Phi(x)).
\end{array}
\]
The rest of the proof can be obtained from the fact that $\nabla\Phi(\cdot)$ is linear and the Implicit Function Theorem for semismooth functions \cite{Sun01implicit,Meng05}.

(ii) When $p(\cdot)$ is a polyhedral gauge function, we know that the set $Q$ defined in \cref{def: set-Q} is a convex polyhedral set \cite[Theorem 19.3]{rockafellar1970convex} and the projector $\Pi_{Q}(\cdot)$ is piecewise affine \cite[Proposition 4.1.4]{Facchinei03}. Therefore, $y(\cdot)$ is a piecewise affine function on $(0, +\infty)$. Then both $y(\cdot)$ and $\varphi(\cdot)$ are strongly semismooth on $(0, +\infty)$ \cite[Proposition 7.4.7, Proposition 7.4.4, Proposition 7.4.8]{Facchinei03}.
\end{proof}

\begin{remark}
We make some remarks on the assumptions {in} Part (i) of \cref{prop: strong-semismoothness-varphi}. On one hand, the strong semismoothness of the projector $\Pi_{K}(\cdot)$ has been proved for some important non-polyhedral closed convex sets $K$. In particular, $\Pi_K(\cdot)$ is strongly semismooth if $K$ is the positive semidefinite cone \cite{Sunsemismooth02}, the second-order cone \cite{Chen03SOC}, or the $\ell_2$ norm ball \cite[Lemma 2.1]{Zhang2020}. On the other hand, the assumption of the nondegeneracy of $\partial_x H(\cdot, \cdot)$ at the concerned point is closely related to the important concept of strong regularity of the KKT system of \cref{eq: reg-LS-form}. One can refer to the Monograph \cite{Bonnans00} and the references therein for a general discussion, and to \cite{Sun06strong,ChanSun08} for the semidefinite programming problems.
\end{remark}

\section{The HS-Jacobian of $\varphi(\cdot)$ for polyhedral gauge functions $p(\cdot)$}
\label{sec: HS-Jacobian}
In this section, we assume by default that $p(\cdot)$ is a polyhedral gauge function and $0< \lambda_{\infty} < +\infty$. {Then the} set $\partial p(0)$ is polyhedral \cite[Theorem 19.2]{rockafellar1970convex}, which can be assume{d} without loss of generality, to take the form of
\begin{equation}
\label{eq: representation-p-polar}
\partial p(0) := \left\{
u \in \mathbb{R}^n \,|\, Bu \leq d
\right\}
\end{equation}
for some $B \in \mathbb{R}^{q \times n}$ and $d \in \mathbb{R}^q$.

 Inspired by the generalized Jacobian for the projector over a polyhedral set derived by Han and Sun \cite{han1997newton}, which we call the HS-Jacobian, we will derive the HS-Jacobian of the function $\varphi(\cdot)$. As an important implication, we will prove that the Clarke Jacobian of $\varphi(\cdot)$ {at any $\lambda \in (0, \lambda_{\infty})$} is positive. Note that the open interval $(0, \lambda_{\infty})$ contains the solution $\lambda^*$ to \cref{eq: root-finding-levelset-lst}.

Let $\lambda \in (0, \lambda_{\infty})$ be arbitrarily chosen. Let $(y(\lambda), u(\lambda))$ be the unique solution to \cref{eq: dual-LS-reg} with the parameter $\lambda$. 
Here, we denote $(y, u) = (y(\lambda), u(\lambda))$ to simplify our notation and hide the dependency on $\lambda$.
Then there exists $x \in \Omega(\lambda)$ such that $(y, u, x)$ satisfies the following KKT system:
\begin{equation}
\label{KKT-general-org}
u = \Pi_{\partial p(0)}(u + x), \quad
y - b + Ax = 0, \quad
A^Ty - \lambda u = 0.
\end{equation}
Therefore, $u$ is the unique solution to the following optimization problem
\begin{equation}
\label{prob: project-D}
\min_{z \in \mathbb{R}^n} ~ \left\{\frac{1}{2}\|z - (u+x)\|^2 \,|\, Bz \leq d\right\}
\end{equation}
and there exists $\xi\in \mathbb{R}^q$ such that $(u, \xi)$ satisfies the following KKT system for \cref{prob: project-D}:
\begin{equation}
\label{eq: KKT-Projection-to-D}
B^T \xi - x = 0, \quad Bu - d \leq 0, \quad \xi \geq 0, \quad \xi^T(Bu - d) = 0.
\end{equation}
As a result, there exists $(x, \xi) \in \mathbb{R}^n \times \mathbb{R}^q$ such that $(y, u, x, \xi)$ satisfies the following augmented KKT system
\begin{equation}
\label{KKT-general-augmented}
\left\{
\begin{array}{l}
B^T \xi - x = 0, \quad Bu - d \leq 0, \quad \xi \geq 0, \quad \xi^T(Bu - d) = 0,\\[5pt]
y - b + Ax = 0, \quad A^Ty - \lambda u = 0.
\end{array}
\right.
\end{equation}
Let $M(\lambda)$ be the set of Lagrange multipliers associated with $(y, u)$ defined as
\begin{equation*}
M(\lambda) := \left\{(x, \xi) \in \mathbb{R}^n \times \mathbb{R}^q \,|\, \mbox{$(y, u, x, \xi)$ satisfies \cref{KKT-general-augmented}}
\right\}.
\end{equation*}
Since $x = B^T\xi$, we obtain the following system by eliminating the variable $x$ in \cref{KKT-general-augmented}:
\begin{equation}
    \label{KKT-general-augmented-reduced}
\left\{
\begin{array}{l}
Bu - d \leq 0, \quad \xi \geq 0, \quad \xi^T(Bu - d) = 0,\\[5pt]
y - b + \widehat{A} \xi = 0, \quad A^Ty - \lambda u = 0,
\end{array}
\right.
\end{equation}
where $\widehat{A} = AB^T \in \mathbb{R}^{m \times q}$. Denote
\begin{equation}
\widehat{M}(\lambda) := \left\{\xi \in \mathbb{R}^q \,|\, (y, u, \xi) \mbox{ satisfies } \cref{KKT-general-augmented-reduced}\right\}.
\end{equation}
Then, the set $M(\lambda)$ is equivalent to
\begin{equation}
\label{eq: M-gamma-augmented}
M(\lambda) = \left\{(x, \xi) \in \mathbb{R}^n \times \mathbb{R}^q \,|\, x = B^T\xi, ~ \xi \in \widehat{M}(\lambda)
\right\}.
\end{equation}
Denote the active set of $u$ as
\begin{equation}
\label{eq: I_u_lasso}
I(u) := \{i \in [q] \,|\, B_{i:}u - d_i = 0\}.
\end{equation}
For any $\lambda \in (0, \lambda_{\infty})$, we define
\begin{equation}
\label{B-set-lasso}
\mathcal{B}(\lambda) :=  \left\{
K \subseteq [q] \,|\, \mbox{$\exists ~ \xi \in \widehat{M}(\lambda)$ s.t. ${\rm supp}(\xi) \subseteq K \subseteq I(u)$} \mbox{ and ${\rm rank}(\widehat{A}_{:K}) = |K|$}
\right\}.
\end{equation}
Since the polyhedral set $\widehat{M}(\lambda)$ does not contain a line, this implies that $\widehat{M}(\lambda)$ has at least one extreme point $\bar{\xi}$ \cite[Corollary 18.5.3]{rockafellar1970convex}. Note that $0 < \lambda < \lambda_{\infty}$ and $x \neq 0$, which implies that $\bar{\xi} \neq 0$ and $\mathcal{B}(\lambda)$ is nonempty.

Define the HS-Jacobian of $y(\cdot)$ as
\begin{equation}
\label{eq: Jacobian-y-lasso}
    \mathcal{H}(\lambda) := \left\{
    h^K \in \mathbb{R}^m \,|\, h^K = \widehat{A}_{:K}(\widehat{A}_{:K}^T\widehat{A}_{:K})^{-1}d_K, ~ K \in \mathcal{B}(\lambda)
    \right\},
    \quad \lambda \in (0, \lambda_{\infty}),
\end{equation}
where $d_K$ is the subvector of $d$ indexed by $K$. For notational convenience, for any $\lambda \in (0, \lambda_{\infty})$ and $K \in \mathcal{B}(\lambda)$, denote
\begin{equation}
 P^K = I - \widehat{A}_{:K}(\widehat{A}_{:K}^T\widehat{A}_{:K})^{-1}\widehat{A}_{:K}^T.
\end{equation}
Define
\begin{equation}
	\label{eq: Jacobian-varphi}
	\mathcal{V}(\lambda) := \left\{
	t  \in \mathbb{R} \,|\, t = \lambda\|h\|^2/\varphi(\lambda), ~ h \in \mathcal{H}(\lambda)
	\right\}, \quad  \lambda \in \mathcal{D},
\end{equation}
where $\mathcal{D} = \{\lambda \in (0, \lambda_{\infty}) \,|\, \varphi(\lambda) > 0 \}$. The following lemma is proved by following the same line as in \cite[Lemma 2.1]{han1997newton}.
\begin{lemma}
\label{lemma: upper-semicontinuouity}
  Let $\bar{\lambda} \in (0, \lambda_{\infty})$ be arbitrarily chosen. It holds that
\begin{equation}
\label{eq: expression-y}
    y(\bar{\lambda}) = P^K b + \bar{\lambda} h^K, \quad \forall \, h^K \in \mathcal{H}(\bar{\lambda}).
\end{equation}
Moreover, there exists a positive scalar $\varsigma$ such that $\mathcal{N}(\bar{\lambda}) := (\bar{\lambda} - \varsigma, \bar{\lambda} + \varsigma) \subseteq (0, \lambda_{\infty})$ and for all $\lambda \in \mathcal{N}(\bar{\lambda})$,
\begin{romannum}
\item $\mathcal{B}(\lambda) \subseteq \mathcal{B}(\bar{\lambda}) \quad \mbox{and} \quad \mathcal{H}(\lambda) \subseteq \mathcal{H}(\bar{\lambda})$;
\item $y(\lambda) = y(\bar{\lambda}) + (\lambda - \bar{\lambda})h, \quad \forall \, h \in \mathcal{H}(\lambda)$.
\end{romannum}
\end{lemma}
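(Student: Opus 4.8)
The plan is to mimic the structure of the Han--Sun argument \cite[Lemma 2.1]{han1997newton}, transported from the projector onto a polyhedron to the solution mapping $y(\cdot)$. First I would establish the representation formula \cref{eq: expression-y}. Fix $\bar\lambda \in (0,\lambda_\infty)$ and let $(y,u) = (y(\bar\lambda), u(\bar\lambda))$ with a multiplier $\xi \in \widehat M(\bar\lambda)$ and an index set $K \in \mathcal B(\bar\lambda)$ with ${\rm supp}(\xi)\subseteq K \subseteq I(u)$ and ${\rm rank}(\widehat A_{:K}) = |K|$. From \cref{KKT-general-augmented-reduced} we have $y = b - \widehat A\xi = b - \widehat A_{:K}\xi_K$ (since $\xi$ is supported on $K$) and $B_{i:}u = d_i$ for $i \in K$, i.e. complementarity on $K$; combined with $A^Ty = \bar\lambda u$ and $x = B^T\xi$ this gives, on eliminating $u$ and $x$, the square system $\widehat A_{:K}^T y = \bar\lambda d_K$ together with $y = b - \widehat A_{:K}\xi_K$. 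Solving the latter for $\xi_K$ using invertibility of $\widehat A_{:K}^T\widehat A_{:K}$ yields $\xi_K = (\widehat A_{:K}^T\widehat A_{:K})^{-1}\widehat A_{:K}^T(b - y) = (\widehat A_{:K}^T\widehat A_{:K})^{-1}(\widehat A_{:K}^Tb - \bar\lambda d_K)$, and substituting back gives $y = P^K b + \bar\lambda\, h^K$ with $h^K$ as in \cref{eq: Jacobian-y-lasso}. This proves \cref{eq: expression-y}.

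Next I would prove the local inclusion $\mathcal B(\lambda)\subseteq \mathcal B(\bar\lambda)$ for $\lambda$ near $\bar\lambda$, which is the crux. The mechanism is upper semicontinuity of the solution mapping together with the finiteness of the index sets. Concretely: if the conclusion failed there would be a sequence $\lambda_k \to \bar\lambda$ and $K_k \in \mathcal B(\lambda_k)$ with $K_k \notin \mathcal B(\bar\lambda)$; passing to a subsequence we may assume $K_k \equiv K$ is constant (there are finitely many subsets of $[q]$). For each $k$ there is $\xi^k \in \widehat M(\lambda_k)$ with ${\rm supp}(\xi^k)\subseteq K \subseteq I(u(\lambda_k))$ and ${\rm rank}(\widehat A_{:K}) = |K|$. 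Since $y(\cdot)$, hence $u(\cdot) = \lambda^{-1}A^Ty(\cdot)$, is continuous (\cref{prop: strict-monotonicity-tame}(i)), and since $\xi^k_K = (\widehat A_{:K}^T\widehat A_{:K})^{-1}\widehat A_{:K}^T(b - y(\lambda_k))$ is then determined and converges, the limit $\xi$ satisfies $\xi \in \widehat M(\bar\lambda)$, ${\rm supp}(\xi)\subseteq K$, and $K \subseteq I(u(\bar\lambda))$ --- here I use that $I(u(\lambda_k))\subseteq I(u(\bar\lambda))$ for $k$ large, which follows because the inactive constraints at $\bar\lambda$ remain strictly inactive nearby by continuity of $u(\cdot)$. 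Hence $K \in \mathcal B(\bar\lambda)$, a contradiction. The inclusion $\mathcal H(\lambda)\subseteq \mathcal H(\bar\lambda)$ is then immediate since $h^K$ depends only on $K$ and the fixed data $\widehat A, d$.

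Finally, part (ii): for $\lambda \in \mathcal N(\bar\lambda)$ pick any $h = h^K \in \mathcal H(\lambda)$; by the inclusion just proved, $K \in \mathcal B(\lambda)\cap\mathcal B(\bar\lambda)$, so applying the representation \cref{eq: expression-y} at both $\lambda$ and $\bar\lambda$ with the \emph{same} $K$ gives $y(\lambda) = P^K b + \lambda h$ and $y(\bar\lambda) = P^K b + \bar\lambda h$; subtracting yields $y(\lambda) = y(\bar\lambda) + (\lambda-\bar\lambda)h$. The only subtlety to check is that $\mathcal N(\bar\lambda)$ can be chosen to lie inside $(0,\lambda_\infty)$ and small enough that both the strict-inactivity argument and the constancy-of-$K$ extraction go through; since $\lambda_\infty > \bar\lambda > 0$ this is just shrinking $\varsigma$.

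I expect the main obstacle to be the upper-semicontinuity step in part (i) --- specifically, verifying cleanly that for $\lambda$ near $\bar\lambda$ every $K \in \mathcal B(\lambda)$ already satisfies $K \subseteq I(u(\bar\lambda))$ and supports a multiplier in $\widehat M(\bar\lambda)$. This requires combining continuity of $u(\cdot)$ with the finiteness of the combinatorial data, and care is needed because a priori the active set $I(u(\lambda))$ could jump; the key observation that rescues the argument is that $I(u(\lambda))\subseteq I(u(\bar\lambda))$ near $\bar\lambda$, i.e. the active set can only shrink, which is exactly the one-sided semicontinuity provided by continuity of $u(\cdot)$. Everything else is linear algebra with the fixed full-column-rank blocks $\widehat A_{:K}$.
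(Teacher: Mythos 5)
Your proposal is correct and follows essentially the same route as the paper: the same elimination of the multiplier via the full-column-rank block $\widehat A_{:K}$ to get $y(\bar\lambda)=P^Kb+\bar\lambda h^K$, the same contradiction argument for $\mathcal B(\lambda)\subseteq\mathcal B(\bar\lambda)$ using finiteness of the index sets, continuity of $y(\cdot)$ and $u(\cdot)$, and the shrinking of the active set $I(u(\lambda))\subseteq I(u(\bar\lambda))$, and the same two-evaluations-of-the-representation step for part (ii). The only cosmetic difference is that you compute the limiting multiplier explicitly through $(\widehat A_{:K}^T\widehat A_{:K})^{-1}\widehat A_{:K}^T(b-y(\lambda_k))$, whereas the paper extracts an accumulation point of the bounded sequence $\{\xi^k\}$; both are valid.
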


\begin{proof}
Choose a sufficiently small $\varsigma > 0$ such that $\mathcal{N}(\bar{\lambda}) := (\bar{\lambda} - \varsigma, \bar{\lambda} + \varsigma) \subseteq (0, \lambda_{\infty})$ and let $\lambda \in \mathcal{N}(\bar{\lambda}) \backslash \bar{\lambda}$ be arbitrarily chosen. Denote $(\bar{y}, \bar{u}) = (y(\bar{\lambda}), u(\bar{\lambda}))$ and $(y, u) = (y(\lambda), u(\lambda))$ for notational simplicity.

Let $K \in \mathcal{B}(\bar{\lambda})$ be arbitarily chosen. From \cref{KKT-general-augmented-reduced}, there exists $\bar{\xi} \in \widehat{M}(\bar{\lambda})$ with ${\rm supp}(\bar{\xi}) \subseteq K \subseteq I(\bar{u})$ such that $(\bar{y}, \bar{u}, \bar{\xi})$ satisfies
\begin{equation}
\bar{y} = b - \widehat{A}_{:K}\bar{\xi}_K, \quad
BA^T \bar{y} = \widehat{A}^T\bar{y} = \bar{\lambda} B\bar{u}, \quad
B_{K:}\bar{u} = d_K, \quad \xi_{K^c} = 0,
\end{equation}
where $K^c$ is the complement of $K$, which implies
\[
\bar{\lambda} d_K = \widehat{A}_{:K}^T(b - \widehat{A}_{:K}\bar{\xi}_K).
\]
Since $\widehat{A}_{:K}$ is of full column rank, we have
\[
\bar{\xi}_K = (\widehat{A}_{:K}^T\widehat{A}_{:K})^{-1}(\widehat{A}_{:K}^Tb - \bar{\lambda} d_K).\]
Consequently, we have
		\[
		\bar{y} = P^K b + \bar{\lambda} h^K,
		\]
		where $h^K = \widehat{A}_{:K}(\widehat{A}_{:K}^T\widehat{A}_{:K})^{-1}d_K \in \mathcal{H}(\bar{\lambda})$.

(i) It follows from \cref{prop: strict-monotonicity-tame} that $u(\cdot)$ is locally Lipshitz continuous, which implies that $I(u) \subseteq I(\bar{u})$. Next, we prove that $\mathcal{B}(\lambda) \subseteq \mathcal{B}(\bar{\lambda})$. If not, then there exists a sequence $\{\lambda_k\}_{k \geq 1} \subseteq \mathcal{N}(\bar{\lambda})$ converges to $\bar{\lambda}$, such that for all $k$, there is an index set $K^k \in \mathcal{B}(\lambda_k) \backslash \mathcal{B}(\bar{\lambda})$. Denote the solution to \cref{eq: dual-LS-reg} with the parameter $\lambda_k$ as $(y^k, u^k)$. Since there exist only finitely many choices for the index sets in $\mathcal{B}(\cdot)$, if necessary by taking a subsequence we assume that the index sets $K^k$ are identical for all $k \geq 1$. Denote the common index set as $\tilde{K}$. Then, the matrix $\widehat{A}_{:\tilde{K}}$ has full column rank and there exists $\xi^k \in \widehat{M}(\lambda_k)$ (and $(B^T\xi^k, \xi^k) \in M(\lambda_k)$) such that ${\rm supp}(\xi^k) \subseteq \tilde{K} \subseteq I(u^k)$ but $\tilde{K} \not\in \mathcal{B}(\bar{\lambda})$. Since $I(u^k) \subseteq I(\bar{u})$, then there is no $\xi \in \mathcal{B}(\bar{\lambda})$ such that ${\rm supp}(\xi) \subseteq \tilde{K}$. However, since $\xi^k \in \widehat{M}(\lambda^k)$, it satisfies
\begin{equation}
y^k - b + \widehat{A}_{:\tilde{K}}\xi^k_{\tilde{K}} = 0.
\end{equation}
As $y(\cdot)$ is locally Lipschitz continuous and $\widehat{A}_{:\tilde{K}}$ is of full column rank, the sequence $\{\xi^k\}_{k \geq 1}$ is bounded. Let $\tilde{\xi}$ be an accumulation point of $\{\xi^k\}_{k \geq 1}$, then $\tilde{\xi} \in \mathcal{B}(\bar{\lambda})$ and ${\rm supp}(\tilde{\xi}) \subseteq \tilde{K}$. This is a contradiction. Therefore, $\mathcal{B}(\lambda) \subseteq \mathcal{B}(\bar{\lambda})$. From the definition of $\mathcal{H}(\cdot)$ in \cref{eq: Jacobian-y-lasso}, we also have $\mathcal{H}(\lambda) \subseteq \mathcal{H}(\bar{\lambda})$.

(ii) Let $K \in \mathcal{B}(\lambda)$ be arbitarily chosen. It follows from \cref{eq: expression-y} that
		\[
		y = P^K b + \lambda h^K,
		\]
		where $h^K = \widehat{A}_{:K}(\widehat{A}_{:K}^T\widehat{A}_{:K})^{-1}d_K \in \mathcal{H}(\lambda)$.
  Since $K \in \mathcal{B}(\lambda) \subseteq \mathcal{B}(\bar{\lambda})$ and $h^K \in \mathcal{H}(\lambda) \subseteq \mathcal{H}(\bar{\lambda})$, in a same vein, we have
  \[
    \bar{y} = P^K b + \bar{\lambda} h^K.
  \]
  As a result, for all $h \in \mathcal{H}(\lambda)$,
  \[
  y = \bar{y} + (\lambda - \bar{\lambda})h.
  \]
  We complete the proof of the lemma.
\end{proof}

Next, we prove the nondegeneracy of $\partial\varphi(\bar{\lambda})$ for any $\bar{\lambda} \in (0, \lambda_{\infty})$, which is important for analyzing the convergence rates of the secant method for solving \cref{eq: root-finding-levelset-lst}.

\begin{theorem}
\label{prop: property-H-gamma}
Let $p(\cdot)$ be a polyhedral gauge function. Assume that $0 < \lambda_{\infty} < +\infty$. For any $\bar{\lambda} \in (0, \lambda_{\infty})$, it holds that
\begin{romannum}
\item for any integer $k\ge 1$, the function     $\varphi(\cdot)$ is piecewise $C^k  $ in an open interval containing $\bar{\lambda}$;
\item all $v \in \partial\varphi(\bar{\lambda})$ are positive.
\end{romannum}

\end{theorem}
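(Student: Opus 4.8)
The plan is to build on Lemma~\ref{lemma: upper-semicontinuouity}, which already gives us, for any fixed $\bar\lambda \in (0,\lambda_\infty)$, a neighborhood $\mathcal{N}(\bar\lambda) = (\bar\lambda - \varsigma, \bar\lambda + \varsigma) \subseteq (0,\lambda_\infty)$ on which $y(\cdot)$ has the local affine representation $y(\lambda) = y(\bar\lambda) + (\lambda - \bar\lambda)h$ for every $h \in \mathcal{H}(\lambda) \subseteq \mathcal{H}(\bar\lambda)$, with each $h = h^K = \widehat{A}_{:K}(\widehat{A}_{:K}^T\widehat{A}_{:K})^{-1}d_K$ for $K \in \mathcal{B}(\lambda)$. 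The key structural observation is that, since there are only finitely many index sets $K \subseteq [q]$, the set $\mathcal{B}(\cdot)$ takes only finitely many values; together with the inclusion $\mathcal{B}(\lambda) \subseteq \mathcal{B}(\bar\lambda)$ on $\mathcal{N}(\bar\lambda)$ and a standard argument (the $\lambda$ with $\mathcal{B}(\lambda) \ni K$ for a fixed $K$ form a set whose closure is locally an interval, because $y$ is continuous and the affine pieces can only agree on a relatively closed set), $\mathcal{N}(\bar\lambda)$ decomposes into finitely many subintervals on each of which $y(\cdot)$ coincides with a single affine function $\lambda \mapsto y(\bar\lambda) + (\lambda - \bar\lambda)h^K$. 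On each such subinterval $\varphi(\lambda) = \|y(\lambda)\|$ is the norm of an affine function of $\lambda$; since $\varphi(\lambda) = \|P^K b + \lambda h^K\| > 0$ throughout $(0,\lambda_\infty)$ (indeed $\varphi(\lambda) \geq \varphi(\lambda')$ for small $\lambda' > 0$ and $\varphi$ is strictly increasing on $(0,\lambda_\infty]$ by Proposition~\ref{prop: strict-monotonicity-tame}, so it is bounded away from $0$ near $\bar\lambda$), the square root $\sqrt{\,\cdot\,}$ is real-analytic at the relevant arguments and hence $\varphi$ restricted to each subinterval is real-analytic — in particular $C^k$ for every $k \geq 1$. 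This proves (i): $\varphi$ is piecewise $C^k$ on an open interval containing $\bar\lambda$.

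For (ii), I would first compute the one-sided derivatives of $\varphi$ at $\bar\lambda$ along the affine pieces. On a piece where $y(\lambda) = P^K b + \lambda h^K$ we have, writing $\varphi(\lambda)^2 = \|P^K b + \lambda h^K\|^2$ and differentiating,
\[
\varphi'(\lambda) = \frac{\langle P^K b + \lambda h^K,\ h^K\rangle}{\varphi(\lambda)} = \frac{\langle y(\lambda), h^K\rangle}{\varphi(\lambda)}.
\]
The claim is that $\langle y(\lambda), h^K\rangle = \lambda\|h^K\|^2$, which would give $\varphi'(\lambda) = \lambda\|h^K\|^2/\varphi(\lambda) > 0$ (noting $h^K \neq 0$, since $h^K = 0$ would force $d_K = 0$ and then $\xi_K = (\widehat A_{:K}^T\widehat A_{:K})^{-1}\widehat A_{:K}^T b$ together with $x = B^T\xi \neq 0$; I must check $h^K\ne 0$ carefully using $0<\lambda<\lambda_\infty$ and $x\ne 0$, paralleling the nonemptiness argument for $\mathcal{B}(\lambda)$ in the text). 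To verify $\langle y(\lambda), h^K\rangle = \lambda\|h^K\|^2$: from $y(\lambda) = P^K b + \lambda h^K$ and $P^K = I - \widehat A_{:K}(\widehat A_{:K}^T\widehat A_{:K})^{-1}\widehat A_{:K}^T$, one checks that $h^K$ lies in the range of $\widehat A_{:K}$ while $P^K b$ lies in its orthogonal complement, so $\langle P^K b, h^K\rangle = 0$ and the identity follows. Thus each one-sided derivative of $\varphi$ at $\bar\lambda$ equals some element of $\mathcal{V}(\bar\lambda)$, and is strictly positive.

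Finally I would assemble (ii) from the piecewise-$C^k$ structure. By Lemma~\ref{lemma: lateral-derivative-approx}(i) (applicable since $\varphi$ is semismooth by Proposition~\ref{prop: strong-semismoothness-varphi}(ii)), $\partial_{\rm B}\varphi(\bar\lambda) = \{\varphi'(\bar\lambda+), \varphi'(\bar\lambda-)\}$, and by the computation above both of these lateral derivatives are limits of expressions $\lambda\|h^K\|^2/\varphi(\lambda)$ over pieces accumulating at $\bar\lambda$, hence belong to $\mathcal{V}(\bar\lambda)$ and are positive; since $\mathcal{B}(\bar\lambda)$ is finite and $\varphi$ is continuous, these limits are actually attained values $\bar\lambda\|h^K\|^2/\varphi(\bar\lambda)$ for suitable $K \in \mathcal{B}(\bar\lambda)$. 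Then $\partial\varphi(\bar\lambda) = {\rm conv}(\partial_{\rm B}\varphi(\bar\lambda))$ is a convex combination of strictly positive numbers, hence every $v \in \partial\varphi(\bar\lambda)$ is positive. The main obstacle I anticipate is the bookkeeping in the first paragraph: rigorously justifying that the finitely many affine selections of $y(\cdot)$ partition a neighborhood of $\bar\lambda$ into genuine subintervals (so that "piecewise $C^k$" holds in the strict sense, not merely that $\varphi$ agrees locally with finitely many smooth functions), which requires showing the set where two distinct affine pieces of $y$ coincide is locally a single point and invoking the inclusion $\mathcal{B}(\lambda)\subseteq\mathcal{B}(\bar\lambda)$ to rule out oscillation; the derivative identities and the positivity conclusion are then routine.
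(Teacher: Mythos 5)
Your overall route is essentially the paper's: use \cref{lemma: upper-semicontinuouity} to write $y(\lambda)=P^Kb+\lambda h^K$ locally, note $\langle P^Kb,h^K\rangle=0$ so that $\varphi$ is locally a selection of the finitely many functions $\varphi^K(s)=\sqrt{\|P^Kb\|^2+s^2\|h^K\|^2}$, and get positivity of $\partial\varphi(\bar\lambda)$ from the derivative formula $\lambda\|h^K\|^2/\varphi(\lambda)$. But there is one genuine gap, and it sits exactly at the point you flagged: the claim $h^K\neq 0$ for every $K\in\mathcal{B}(\lambda)$, $\lambda\in\mathcal{N}(\bar\lambda)\setminus\{\bar\lambda\}$. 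Your sketch ``$h^K=0$ forces $d_K=0$, which contradicts $x=B^T\xi\neq 0$'' does not close: since $\widehat{A}_{:K}$ has full column rank, $h^K=0$ is indeed equivalent to $d_K=0$, but $d_K=0$ is perfectly compatible with $\xi\neq 0$ and $x=B^T\xi\neq 0$ (it only yields $p(x)=\langle\xi_K,d_K\rangle=0$, and ruling that out for $0<\lambda<\lambda_\infty$ requires a further argument, e.g.\ showing $p(x(\lambda))=0$ would force $Ax(\lambda)=0$ and hence $\lambda\ge\lambda_\infty$). The paper closes this step differently and more cheaply: since $K\in\mathcal{B}(\lambda)\subseteq\mathcal{B}(\bar\lambda)$, \cref{lemma: upper-semicontinuouity} gives \emph{both} $\varphi(\lambda)=\varphi^K(\lambda)$ and $\varphi(\bar\lambda)=\varphi^K(\bar\lambda)$; if $h^K=0$ then $\varphi^K$ is constant, contradicting the strict monotonicity of $\varphi$ on $(0,\lambda_\infty]$ from \cref{prop: strict-monotonicity-tame}. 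Since $h^K\neq 0$ is what makes the lateral derivatives strictly positive, this step must be repaired (and you already have the monotonicity tool in hand, as you use it to bound $\varphi$ away from zero).

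Two smaller remarks. First, the ``main obstacle'' you anticipate in part (i) — decomposing $\mathcal{N}(\bar\lambda)$ into genuine subintervals on which $y$ is a single affine map — is unnecessary: piecewise $C^k$ in the standard sense only requires that the continuous function $\varphi$ satisfy $\varphi(s)\in\{\varphi^K(s)\}_{K\in\bar{\mathcal{B}}}$ with $\bar{\mathcal{B}}$ finite and each $\varphi^K$ of class $C^k$ near $\bar\lambda$, which is exactly what \cref{lemma: upper-semicontinuouity} delivers (your interval decomposition is true for a one-dimensional piecewise affine $y$, but you do not need it). Second, your assembly of part (ii) via semismoothness and $\partial_{\rm B}\varphi(\bar\lambda)=\{\varphi'(\bar\lambda+),\varphi'(\bar\lambda-)\}$ (\cref{lemma: lateral-derivative-approx}) is a legitimate alternative to the paper's argument, which instead bounds $\partial\varphi(\bar\lambda)\subseteq{\rm conv}(\{\bar\lambda\|h^K\|^2/\varphi(\bar\lambda)\,|\,K\in\bar{\mathcal{B}}\})$ using \cite[Theorem 2.5.1]{clarke1983optimization} and upper semicontinuity of $\partial_{\rm B}\varphi(\cdot)$; both yield the conclusion once $h^K\neq0$ is in place.
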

\begin{proof}
Choose a sufficiently small $\varsigma > 0$ such that $\mathcal{N}(\bar{\lambda}) = (\bar{\lambda} - \varsigma, \bar{\lambda} + \varsigma) \subseteq (0, \lambda_{\infty})$ and $\mathcal{B}(\lambda) \subseteq \mathcal{B}(\bar{\lambda})$ for any $\lambda \in \mathcal{N}(\bar{\lambda})$. Let $\lambda \in \mathcal{N}(\bar{\lambda}) \backslash \bar{\lambda}$ and $K \in \mathcal{B}(\lambda)$ be arbitrarily chosen. Denote
\[
h^K = \widehat{A}_{:K}(\widehat{A}_{:K}^T\widehat{A}_{:K})^{-1}d_K.
\]
Then, we have $h^K \in \mathcal{H}(\lambda) \subseteq \mathcal{H}(\bar{\lambda})$.

Now, we prove Part (i) of the theorem. From the fact
\[\langle P^K b, h^K \rangle = 0\]
and \cref{lemma: upper-semicontinuouity}, we know that
  \[
  \varphi(\lambda) = \sqrt{\|P^K b \|^2 + \lambda^2\|h^K\|^2} \quad \mbox{ and } \quad \varphi(\bar{\lambda}) = \sqrt{\|P^K b \|^2 + (\bar{\lambda})^2\|h^K\|^2}.
  \]
  Define $\varphi^K: \mathbb{R} \to \mathbb{R}_+$ by
  \begin{equation}
   \label{def: varphi_K}
   \varphi^K(s) := \sqrt{\|P^Kb\|^2 + s^2\|h^K\|^2}, \quad   s \in \mathbb{R}.
  \end{equation}
  From \cref{prop: strict-monotonicity-tame}, we know that $\varphi(\cdot)$ is strictly increasing on $(0, \lambda_{\infty}]$. Therefore, it holds that
  \begin{equation}
      \varphi^K(\lambda) = \varphi(\lambda) \neq \varphi(\bar{\lambda}) = \varphi^K(\bar{\lambda}),
  \end{equation}
  which implies that $h^K \neq 0$. Since $K \in \mathcal{B}(\lambda)$ is arbitrarily chosen, we obtain that
  \begin{equation}
  \label{eq: nonzero-hk}
      h^K \neq 0, \, \forall K \in \mathcal{B}(\lambda).
  \end{equation}
  Thus, for any integer $k \geq 1$, $\varphi^K(\cdot)$ is $C^k$ on $\mathcal{N}(\bar{\lambda})$.

Denote $\bar{\mathcal{B}} = \bigcup_{\lambda \in \mathcal{N}(\bar{\lambda}) \backslash \bar{\lambda}} \mathcal{B}(\lambda)$. We know that $\bar{\mathcal{B}} \subseteq \mathcal{B}(\bar{\lambda})$ and $|\bar{\mathcal{B}}|$ is finite. Moreover,
  \begin{equation}
      \varphi(s) \in \{\varphi^K(s)\}_{K \in \bar{\mathcal{B}}}, \quad  \forall \, s \in \mathcal{N}(\bar{\lambda}),
  \end{equation}
  which implies that for any $k \geq 1$, $\varphi(\cdot)$ is piecewise $C^k$ on $\mathcal{N}(\bar{\lambda})$.

  Next, we prove Part (ii) of the theorem. It follows from \cref{eq: nonzero-hk} that for any $K \in \bar{\mathcal{B}}$ and $\lambda > 0$,
  \begin{equation}
      h^K \neq 0 \quad  \mbox{ and } \quad (\varphi^K)'(\lambda) = \lambda\|h^K\|^2/\varphi^K(\lambda) > 0.
  \end{equation}
  By \cite[Theorem 2.5.1]{clarke1983optimization} and the upper semicontinuity of $\partial_{\rm B}\varphi(\cdot)$, we have
  \begin{equation}
      \partial\varphi(\bar{\lambda}) \subseteq {\rm conv}(\{\bar{\lambda}\|h^K\|^2/\varphi(\bar{\lambda}) \,|\, K \in \bar{\mathcal{B}}\}),
  \end{equation}
which implies
\[
v > 0, \quad \forall \, v \in \partial\varphi(\bar{\lambda}).
\]
We complete the proof of the theorem.
\end{proof}

\section{A secant method for \cref{eq: main-prob}}
\label{sec: secant-levelset}
In this section, we will design a fast convergent secant method for solving \cref{eq: root-finding-levelset-lst} and prove its convergence rates.

\subsection{A fast convergent secant method for semismooth equations}

Let $f: \mathbb{R} \to \mathbb{R}$ be a locally Lipschitz continuous function which is semismooth at a solution $x^*$ to the following equation
\begin{equation}
\label{eq: F-equation}
    f(x) = 0.
\end{equation}
In this section, we analyze the convergence of the secant method described in \cref{alg: classic-secant} with two generic starting points $x^{-1}$ and $x^0$.

\begin{algorithm}
	\caption{A secant method for solving \cref{eq: F-equation}}
	\label{alg: classic-secant}
	\begin{algorithmic}[1]
		\STATE{\textbf{Input}: $x^{-1}, x^0 \in \mathbb{R}$.}
		\STATE{\textbf{Initialization}: Set $k = 0$.}
		\WHILE{$f(x^k) \neq 0$}
        \STATE{\textbf{Step 1}. Compute
        \begin{equation}
            x^{k+1} = x^k - (\delta_f(x^k, x^{k-1}))^{-1} f(x^k).
        \end{equation}}
        \STATE{\textbf{Step 2}. Set $k = k+1$.}
		\ENDWHILE
		\STATE{\textbf{Output}: $x^k$}.
	\end{algorithmic}
\end{algorithm}

The convergence results of \cref{alg: classic-secant} are given in the following proposition. The proof can be obtained by following the procedure in the proof of \cite[Theorem 3.2]{Potra98}.
\begin{proposition}
\label{thm: convergence-classic-secant}
Suppose that $f: \mathbb{R} \to \mathbb{R}$ is semismooth at a solution $x^*$ to \cref{eq: F-equation}. Let $d^-$ and $d^+$ be the lateral derivatives of $f$ at $x^*$ as defined in \cref{def: lateral-derivative}. If $d^-$ and $d^{+}$ are both positive (or negative), then there are two neighborhoods $\mathcal{U}$ and $\mathcal{N}$ of $x^*$, $\mathcal{U} \subseteq \mathcal{N}$, such that for $x^{-1}, x^0 \in \mathcal{U}$, \cref{alg: classic-secant} is well defined and produces a sequence of iterates $\{x^k\}$ such that $\{x^k\} \subseteq \mathcal{N}$. The sequence $\{x^k\}$ converges to $x^*$ 3-step Q-superlinearly, i.e., $|x^{k+3} - x^*| = o(|x^k - x^*|)$. Moreover, it holds that
\begin{romannum}
    \item $|x^{k+1} - x^*| \leq \frac{|d^+ - d^{-} + o(1)|}{\min\{|d^+|, |d^{-}|\} + o(1)} |x^k - x^*|$ for $k \geq 0$;
    \item if $\alpha := \frac{|d^+ - d^-|}{\min\{|d^+|, |d^-|\}} < 1$, then $\{x^k\}$ converges to $x^*$ Q-linearly with Q-factor $\alpha$;
    \item if $f$ is $\gamma$-order semismooth at $x^*$ for some $\gamma > 0$, then $|x^{k+3} - x^*| = O(|x^k - x^*|^{1 + \gamma})$ for sufficiently large $k$; the sequence $\{x^k\}$ converges to $x^*$ 3-step quadratically if $f$ is strongly semismooth at $x^*$.
\end{romannum}
\end{proposition}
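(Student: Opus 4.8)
Proof proposal.

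The plan is to adapt the proof of \cite[Theorem 3.2]{Potra98}, the essential one-dimensional ingredient being a sign/side bookkeeping for the iterates together with the slope estimates of \cref{lemma: lateral-derivative-approx}. Since the secant step in \cref{alg: classic-secant} is unchanged under $f \mapsto -f$ while the lateral derivatives $d^-,d^+$ change sign, I may assume $d^- > 0$ and $d^+ > 0$. Writing $e_k := x^k - x^*$ and using $f(x^*) = 0$ to write $f(x^k) = \delta_f(x^k, x^*)\,e_k$, the iteration becomes, whenever $\delta_f(x^k, x^{k-1}) \neq 0$,
\[
e_{k+1} = \frac{\delta_f(x^k, x^{k-1}) - \delta_f(x^k, x^*)}{\delta_f(x^k, x^{k-1})}\, e_k .
\]

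By \cref{lemma: lateral-derivative-approx}, there is a neighborhood of $x^*$ on which, for $u \neq v$, the slope $\delta_f(u, v)$ lies within $o(1)$ of the closed interval with endpoints $d^-$ and $d^+$, equals $d^- + o(1)$ when $u, v \le x^*$, and equals $d^+ + o(1)$ when $u, v \ge x^*$. In particular $\delta_f(u,v)$ is bounded below by a positive constant there, so \cref{alg: classic-secant} is well defined while the iterates stay in that neighborhood, and the displayed recursion immediately yields the bound of Part (i) as well as a uniform estimate $|e_{k+1}| \le C|e_k|$ with $C$ depending only on $d^{\pm}$. When $\alpha := |d^+ - d^-|/\min\{|d^+|,|d^-|\} < 1$, Part (i) forces $\limsup_k |e_{k+1}|/|e_k| \le \alpha$, which is Part (ii) once $e_k \to 0$ is established.

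The three-step estimate is the core. I would split on the signs of $(e_{k-1}, e_k)$. If they agree, both slopes in the recursion coincide with the same lateral derivative up to $o(1)$, the numerator is $o(1)$, and $|e_{k+1}| = o(\max\{|e_k|,|e_{k-1}|\})$ --- a superlinear step. If they differ, inspecting the sign of the factor $1 - d^{\pm}/\delta_f(x^k, x^{k-1})$ (with $\delta_f(x^k, x^{k-1})$ between $d^-$ and $d^+$) pins down the sign of $e_{k+1}$, and one checks that a same-sign pair --- hence a superlinear step --- must occur within at most two more iterations, while the intervening factors have modulus at most $\alpha$ (or less); thus through any window of three steps the iterates stay controlled by $C|e_k|$, and combining with the superlinear step gives $|e_{k+3}| = o(|e_k|)$. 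Choosing nested neighborhoods $\mathcal{U} \subseteq \mathcal{N}$ small enough that the at-most-two-step transient growth by $C$ keeps the iterates in $\mathcal{N}$ and forces $e_k \to 0$ then gives the convergence statement. For Part (iii), replacing each $o(1)$ in \cref{lemma: lateral-derivative-approx} by the bound $O(|u - x^*|^{\gamma} + |v - x^*|^{\gamma})$ afforded by $\gamma$-order semismoothness turns the superlinear step into $|e_{k+1}| = O\big((|e_k|^{\gamma} + |e_{k-1}|^{\gamma})|e_k|\big)$; propagating through the window together with $|e_{j+1}| \le C|e_j|$ gives $|e_{k+3}| = O(|e_k|^{1+\gamma})$, specializing to 3-step Q-quadratic convergence when $\gamma = 1$.

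The delicate part is precisely this sign-tracking combined with the control of the intermediate steps, which may be expansive when $\alpha \ge 1$: one must verify that, however the signs of $e_{k-1}, e_k$ alternate, a superlinear step appears within any three consecutive iterates and the error does not blow up in between, and one must set up $\mathcal{U} \subseteq \mathcal{N}$ so that invariance of $\mathcal{N}$ survives single-step expansion. The error recursion, the local slope estimates, and the bound in Part (i) are otherwise routine.
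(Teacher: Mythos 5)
Your proposal is correct in outline and follows essentially the same route as the paper, whose ``proof'' is simply a pointer to the argument of \cite[Theorem 3.2]{Potra98}: the error recursion $e_{k+1}=\bigl[\delta_f(x^k,x^{k-1})-\delta_f(x^k,x^*)\bigr]\delta_f(x^k,x^{k-1})^{-1}e_k$, the slope estimates of \cref{lemma: lateral-derivative-approx}, and the side-of-$x^*$ case analysis guaranteeing a superlinear (or $\gamma$-order) step within any three consecutive iterations are exactly the ingredients of that proof. The ``delicate part'' you defer (sign tracking, control of the possibly expansive intermediate steps when $\alpha\ge 1$, and the nested neighborhoods $\mathcal{U}\subseteq\mathcal{N}$) is precisely what the cited proof in Potra--Qi--Sun carries out, so no new idea is missing.
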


Here, we only consider the case for $d^+ \cdot d^- > 0$ since the function $\varphi(\cdot)$ we are interested in is nondecreasing. For the case $d^+ \cdot d^- < 0$, one can refer to \cite[Theorem 3.3]{Potra98}.

When $|d^+ - d^{-}|$ is small and $f$ is strongly semimsooth, we know from \cref{thm: convergence-classic-secant} that the secant method converges with a fast linear rate and 3-step Q-quadratic rate. We provide a numerical example slightly modified from \cite[Equation (3.15)]{Potra98} to illustrate the convergence rates obtained in \cref{thm: convergence-classic-secant}. We test \cref{alg: classic-secant} with $x^{-1} = 0.01$ and $x^{0} = 0.005$ for finding the zero $x^* = 0$ of
\begin{equation}
\label{test-example-Qlinear}
f(x) = \left\{
\begin{array}{ll}
x(x+1) & \mbox{if $x < 0$},\\
-\beta x(x-1) & \mbox{if $x \geq 0$},
\end{array}
\right.
\end{equation}
where $\beta$ is chosen from $\{1.1, 1.5, 2.1\}$. The numerical results are shown in \cref{tab: 1d-Qlinear-verification}, which coincide with our theoretical results.

\begin{table}[tbhp]
{\footnotesize
\caption{The numerical performance of finding the zero of \cref{test-example-Qlinear}. Case I: $\beta = 1.1, \; d^+ = 1.1, \; d^- = 1, \mbox{ and } \alpha = 0.1$; Case II: $\beta = 1.5, \; d^+ = 1.5, \; d^- = 1, \mbox{ and } \alpha = 0.5$; Case III: $\beta = 2.1, \; d^+ = 2.1, \; d^- = 1, \mbox{ and } \alpha = 1.1$.}\label{tab: 1d-Qlinear-verification}
\begin{center}
\vspace{-3mm}
\begin{tabular}{cc|cccccccc} \hline
Case & Iter & 1 & 2 & 3 & 4 & 5&6&7&8 \\ \hline
I & $x$ & -5.1e-5 & -4.3e-6 & 2.2e-10 & -2.2e-11 & -1.8e-12 & 4.1e-23 & -4.1e-24 & -3.4e-25\\
\hline
II & $x$ & -5.1e-5 & -1.7e-5 & 8.4e-10 & -4.2e-10 & -1.1e-10 & 4.5e-20 & -2.2e-20 & -5.6e-21\\
\hline
III & $x$ & -5.1e-5 & -2.6e-5 & 1.3e-9 & -1.5e-9 & -5.1e-10 & 7.4e-19 & -8.2e-19 & -2.8e-19\\
\hline
\end{tabular}
\end{center}
}
\end{table}

Note that \cite[Lemma 4.1]{Potra98} implies that the sequence $\{x^k\}$ generated by \cref{alg: classic-secant} converges suplinearly with R-order at least $\sqrt[3]{2}$. Next, we will prove that the sequence $\{x^k\}$ generated by \cref{alg: classic-secant} converges superlinearly to a solution $x^*$ to \cref{eq: F-equation} with {R}-order at least $(1 + \sqrt{5})/2$  when $f$ is strongly semismooth at $x^*$ and $\partial f(x^*)$ is a singleton and  nondegenerate.
\begin{proposition}
\label{thm: R-order-secant}
    Let $x^*$ be a solution to \cref{eq: F-equation}. {Let $\{x^k\}$ be the sequence generated by \cref{alg: classic-secant} for solving \eqref{eq: F-equation}. For $k \geq -1$, denote $e_k := x^k - x^*$ and assume that $|e_k| > 0$. For $k \geq -1$, denote $c_k := |e_k|/(|e_{k-1}||e_{k-2}|)$.
    } Assume that $\partial f(x^*)$ is a singleton and nondegenerate. It holds that
    \begin{romannum}
        \item if $f$ is semismooth at $x^*$, the sequence $\{x^k\}$ converges to $x^*$ Q-superlinearly;
        \item if $f$ is strongly semismooth at $x^*$, then either one of the following two properties is satisfied:
        {(1) $\{x^k\}$ converges to $x^*$ superlinearly with Q-order at least $(1 + \sqrt{5})/2$};
        {(2) $\{x^k\}$ converges to $x^*$ superlinearly with R-order at least $(1 + \sqrt{5})/2$ and for any  constant $\underline{C} > 0$, there exists a subsequence $\{c_{i_k}\}$ satisfying $c_{i_k} < \underline{C}i_k^{-i_k}$}.
    \end{romannum}
\end{proposition}

\begin{proof}
Let $\mathcal{N}$ and $\mathcal{U}$ be the neighborhoods of $x^*$ specified in \cref{thm: convergence-classic-secant}. Assume that $x^{-1}, x^0 \in \mathcal{U}$. Then \cref{alg: classic-secant} is well defined and it generates a sequence $\{x^k\} \subseteq \mathcal{N}$ which converges to $x^*$. Denote $\partial f(x^*) = \{v^*\}$ for some $v^* \neq 0$. Let $d^-$ and $d^+$ be the lateral derivatives of $f$ at $x^*$ as defined in \cref{def: lateral-derivative}. Then,
\[
d^+ = d^- = v^*.
\]
 Let $K_1$ be a sufficiently large integer. For all $k \geq K_1$, we have
\begin{equation}
    e_{k+1} = \delta_f(x^k, x^{k-1})^{-1}[\delta_f(x^k, x^{k-1}) - \delta_f(x^k, x^*)]e_k.
\end{equation}

(i) Assume that $f$ is semismooth at $x^*$. We estimate
$$\delta_f(x^k, x^{k-1})^{-1}[\delta_f(x^k, x^{k-1}) - \delta_f(x^k, x^*)]$$
by considering the following two cases.

(i-a) $x^k, x^{k-1} > x^*$ or $x^k, x^{k-1} < x^*$. From \cref{lemma: lateral-derivative-approx}, we obtain that
\begin{eqnarray*}
    |e_{k+1}| &=& |\delta_f(x^k, x^{k-1})^{-1}[\delta_f(x^k, x^{k-1}) - \delta_f(x^k, x^*)]e_k|\\
    &=& |(v^* + o(1))^{-1}[(v^* + o(1)) - (v^* + o(1))]e_k|\\
    &=& o(|e_k|).
\end{eqnarray*}

(i-b) $x^{k-1} < x^* < x^k$ or $x^k < x^* < x^{k-1}$. We will consider the first case. The second case can be treated similarly. By \cref{lemma: lateral-derivative-approx}, it holds that
\begin{eqnarray*}
    \delta_f(x^k, x^{k-1}) &=& \frac{f(x^k) - f(x^*) + f(x^*) - f(x^{k-1})}{x^{k} - x^{k-1}}\\
    &=& \frac{(v^* e_{k} + o(|e_k|)) - (v^* e_{k-1} + o(|e_{k-1}|))}{x^k - x^{k-1}}\\
    &=& v^* + o(1).
\end{eqnarray*}
Therefore,
\begin{eqnarray*}
    |e_{k+1}| &=& |\delta_f(x^k, x^{k-1})^{-1}[\delta_f(x^k, x^{k-1}) - \delta_f(x^k, x^*)]e_k|\\
    &=& |(v^* + o(1))^{-1}||(v^* + o(1)) - (v^* + o(1))||e_k|\\
    &=& o(|e_k|).
\end{eqnarray*}
Thus, we prove that the sequence $\{x^k\}$ converges to $x^*$ Q-superlinearly.

(ii) Now, assume that $f$ is strongly semismooth at $x^*$. We build the recursion for $e_k$ for sufficiently large integers $k$ by considering the following two cases.

(ii-a) $x^k, x^{k-1} > x^*$ or $x^k, x^{k-1} < x^*$. From \cref{lemma: lateral-derivative-approx}, we obtain
\begin{eqnarray*}
   |e_{k+1}| &=& |\delta_f(x^k, x^{k-1})^{-1}[\delta_f(x^k, x^{k-1}) - \delta_f(x^k, x^*)]e_k|\\
   &=& |v^* + O(|e_k| + |e_{k-1}|)|^{-1}|(O(|e_k| + |e_{k-1}|))||e_k|\\
   &=& O(|e_k|(|e_k| + |e_{k-1}|)).
\end{eqnarray*}

(ii-b) $x^{k-1} < x^* < x^k$ or $x^k < x^* < x^{k-1}$. We will consider the first case. The second case can be treated similarly. By \cref{lemma: lateral-derivative-approx}, it holds that
\begin{eqnarray*}
    \delta_f(x^k, x^{k-1}) &=& \frac{f(x^k) - f(x^*) + f(x^*) - f(x^{k-1})}{x^{k} - x^{k-1}}\\
    &=& \frac{(v^* e_{k} + O(|e_k|^2)) - (v^* e_{k-1} + O(|e_{k-1}|^2))}{x^k - x^{k-1}}\\
    &=& v^* + O(|e_k| + |e_{k-1}|)
\end{eqnarray*}
and
\begin{eqnarray*}
    |e_{k+1}| &=& |\delta_f(x^k, x^{k-1})^{-1}[\delta_f(x^k, x^{k-1}) - \delta_f(x^k, x^*)]e_k|\\
    &=& |v^* + O(|e_k| + |e_{k-1}|)|^{-1}|O(|e_k| + |e_{k-1}|)||e_k|\\
    &=& O(|e_k|(|e_k| + |e_{k-1}|)).
\end{eqnarray*}
Therefore, for {sufficiently large integers $k$}, we have
\begin{equation}
\label{eq: recursion-bound}
|e_{k+1}| = O(|e_k|^2 + |e_k||e_{k-1}|)
\end{equation}
and
\begin{equation}
\label{eq: bound-limek}
{\limsup_{k \to \infty} c_k = O(1/|v^*|) < +\infty}.
\end{equation}
{Then, there exists a constant $\bar{C} > 0$ and a positive integer $K_2$ such that
\[
|e_{k+1}| \leq \bar{C} |e_k||e_{k-1}|, \quad \forall \, k  \geq K_2.
\]
Therefore, it follows from \cite[Theorem 9.2.9]{Ortega70} that $\{x^k\}$ converges to $x^*$ with R-order at least $(1 + \sqrt{5})/2$.

If there exists a constant $\underline{C} > 0$ such that $c_k \geq \underline{C} k^{-k}$ for all $k$ sufficiently large, it follows from \cite[Corollary 3.1]{FA-Potra89} that $\{x^k\}$ converges to $x^*$ Q-superlinearly with Q-order at least $(1 + \sqrt{5})/2$. We complete the proof.
}
\end{proof}

\begin{proposition}\label{prop:polyhedral-gauge-Q-order}
Let $p(\cdot)$ be a polyhedral gauge function and $\lambda^*$ be the solution to \eqref{eq: root-finding-levelset-lst}. Assume that $0 < \lambda_{\infty} < +\infty$. If $\partial \varphi(\lambda^*)$ is a singleton, the sequence $\{\lambda_k\}$ generated by \cref{alg: classic-secant} for solving \eqref{eq: root-finding-levelset-lst} converges to $\lambda^*$ Q-superlinearly with Q-order at least $(1 + \sqrt{5})/2$.
\end{proposition}

\begin{proof}
The assumption $\partial\varphi(\lambda^*)$ is a singleton implies that $\varphi(\cdot)$ is strictly differentiable at $\lambda^*$ \cite[Proposition 2.2.4]{clarke1983optimization}. It follows from \cref{prop: property-H-gamma} that $\varphi'(\lambda^*) > 0$ and $\varphi(\cdot)$ is piecewise $C^k$ for any positive integer $k \geq 1$ in a neighborhood of $\lambda^*$.

Choose a sufficiently small $\varsigma > 0$ such that $\mathcal{N}(\lambda^*) := (\lambda^* - \varsigma, \lambda^* + \varsigma) \subseteq (0, \lambda_{\infty})$ and $\mathcal{B}(\lambda) \subseteq \mathcal{B}(\lambda^*)$ for any $\lambda \in \mathcal{N}(\lambda^*)$. Denote $\bar{\mathcal{B}} = \bigcup_{\lambda \in \mathcal{N}(\lambda^*)\backslash \lambda^*}\mathcal{B}(\lambda)$. Let $K \in \bar{\mathcal{B}}$ be arbitrarily chosen. Define
$\varphi^K : \mathbb{R} \to \mathbb{R}_+$ by
\[
\varphi^K(s) := \sqrt{\|P^K b\|^2 + s^2\|h^K\|^2}, \quad s \in \mathbb{R}.
\]
By choosing a smaller $\varsigma$ if necessary, we assume that $\{\varphi^K\}_{K \in \bar{\mathcal{B}}}$ is a minimal local representation for $\varphi(\cdot)$ at $\lambda^*$. Therefore, it follows from \cite[Theorem 2]{qi2007almost} that
\[
\varphi'(\lambda^*) = (\varphi^K)'(\lambda^*) = \lambda^*\|h^K\|^2/\varphi(\lambda^*), \quad \forall K \in \bar{\mathcal{B}}.
\]
Since $\varphi'(\lambda^*) > 0$, we have
\[
\|h^K\| = \|h^{K'}\|, ~ \|P^Kb\| = \|P^{K'}b\|, \quad \forall K, K' \in \bar{\mathcal{B}}.
\]
Therefore, $\varphi(\cdot)$ is $C^k$ on $\mathcal{N}(\lambda^*)$ for any integer $k \geq 1$. It follows from \cite[Example 6.1]{Traub64} that $\{\lambda_k\}$ converges to $\lambda^*$ Q-superlinearly with Q-order at least $(1 + \sqrt{5})/2$.
\end{proof}

We give the following example to show that a function satisfying the assumptions in (ii) of \cref{thm: R-order-secant} is not necessarily piecewise smooth:
\begin{equation}
\label{eq: 1d-construction}
f(x) = \left\{
\begin{array}{ll}
\kappa x, & \mbox{if $x < 0$},\\[5pt]
-\frac{1}{3}\left(\frac{1}{4^k}\right) + (1 + \frac{1}{2^k})x, & \mbox{if $x \in \left[\frac{1}{2^{k+1}}, \frac{1}{2^k}\right] \quad \forall k \geq 0$}, \\[5pt]
2x - \frac{1}{3} & \mbox{if $x > 1$},
\end{array}
\right.
\end{equation}
where $\kappa$ is a given constant.

\begin{proposition}
The function $f$ defined in \cref{eq: 1d-construction} is strongly semismooth at $x = 0$ but not piecewise smooth in the neighborhood of $x = 0$.
\end{proposition}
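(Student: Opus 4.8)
The plan is to verify two things: (a) $f$ is strongly semismooth at $x=0$, and (b) $f$ is not piecewise smooth near $x=0$. For (b), I would argue by a standard counting/accumulation argument. Recall that a function is piecewise smooth (piecewise $C^1$) near a point if there is a neighborhood on which it coincides with a \emph{finite} selection of $C^1$ functions. But from the construction, on each interval $[2^{-(k+1)}, 2^{-k}]$ the function $f$ agrees with the distinct affine piece $s \mapsto -\frac13 4^{-k} + (1 + 2^{-k})s$, whose slope $1 + 2^{-k}$ is different for every $k \ge 0$. Since every neighborhood of $0$ contains infinitely many of these intervals (for all large $k$), any local representation of $f$ would require infinitely many pairwise-distinct affine (hence $C^1$) selection functions, contradicting finiteness. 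One should also check that the pieces actually patch continuously at the breakpoints $x = 2^{-k}$ (plugging $s = 2^{-(k+1)}$ into the $k$-th and $(k+1)$-th formulas gives the same value), so that $f$ is genuinely the function described and the obstruction to piecewise smoothness is not a triviality coming from discontinuity.

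For (a), the goal is to show $f(0 + \Delta x) - f(0) - V\Delta x = O(|\Delta x|^2)$ for every $V \in \partial f(\Delta x)$ as $\Delta x \to 0$; since $f(0) = 0$, this is $f(\Delta x) - V\Delta x = O(|\Delta x|^2)$. I would split by sign of $\Delta x$. For $\Delta x < 0$ (small), $f(\Delta x) = \kappa \Delta x$ is affine, so $\partial f(\Delta x) = \{\kappa\}$ and the remainder is exactly $0$. For $\Delta x > 0$ small, say $\Delta x \in [2^{-(k+1)}, 2^{-k}]$ with $k$ large, the relevant generalized gradients $V$ are among the slopes $\{1 + 2^{-k}, 1 + 2^{-(k+1)}\}$ (at a breakpoint the Clarke Jacobian is the interval between the two adjacent slopes; in the interior it is the single slope). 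Using $f(\Delta x) = -\frac13 4^{-k} + (1 + 2^{-k})\Delta x$ and $V = 1 + 2^{-j}$ with $j \in \{k, k+1\}$, compute
\[
f(\Delta x) - V\Delta x = -\tfrac13 4^{-k} + (2^{-k} - 2^{-j})\Delta x .
\]
Here $4^{-k} \le 4\,\Delta x^2$ since $\Delta x \ge 2^{-(k+1)}$, so the first term is $O(\Delta x^2)$; and $|2^{-k} - 2^{-j}| \le 2^{-k} \le 2\,\Delta x$, so the second term is also $O(\Delta x^2)$. Hence the whole remainder is $O(\Delta x^2)$, uniformly in the allowed choices of $V$. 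Strong semismoothness at $0$ also requires $f$ to be directionally differentiable at $0$ and locally Lipschitz near $0$: Lipschitz-ness follows because all the slopes lie in the bounded range $[\,\min\{\kappa,1\},\,2\,]$ (together with continuity), and the directional derivatives at $0$ are $f'(0;1) = \lim_{k\to\infty}(1 + 2^{-k}) = 1$ and $f'(0;-1) = -\kappa$, which exist.

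I expect the main (though still modest) obstacle to be bookkeeping the Clarke generalized Jacobian $\partial f$ at the accumulating breakpoints $x = 2^{-k}$ and making the $O(|\Delta x|^2)$ estimate genuinely uniform over $k$ and over the two-element set of candidate slopes there — i.e., being careful that the definition of G-/strong semismoothness quantifies over \emph{all} $V \in \partial f(0 + \Delta x)$, not just one. The key quantitative inputs are the two elementary inequalities $4^{-k} \le 4\,\Delta x^2$ and $2^{-k} \le 2\,\Delta x$ valid on $[2^{-(k+1)},2^{-k}]$, which convert the apparently $k$-dependent error terms into clean quadratic bounds in $\Delta x$. Everything else is routine verification, and the non-piecewise-smoothness claim is essentially immediate once the infinitely-many-distinct-slopes observation is recorded.
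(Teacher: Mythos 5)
Your proposal is correct and follows essentially the same route as the paper: verify local Lipschitz continuity and the lateral derivatives, note the remainder is exactly zero for $\Delta x<0$, and for $\Delta x\in[2^{-(k+1)},2^{-k}]$ bound $|f(\Delta x)-V\Delta x|$ by $O(|\Delta x|^{2})$ via precisely the inequalities $4^{-k}\le 4\,\Delta x^{2}$ and $2^{-k}\le 2\,\Delta x$, while non-piecewise-smoothness is dismissed, as in the paper, by the kinks at $2^{-k}$ accumulating at $0$. The only minor slip is the bookkeeping at the breakpoint $\Delta x=2^{-k}$, where $\partial f(\Delta x)$ also contains the adjacent slope $1+2^{-(k-1)}$ (so $j=k-1$ can occur, not only $j\in\{k,k+1\}$), but the same estimate $|2^{-k}-2^{-j}|\le 2^{-k}\le 2\,\Delta x$ still yields the quadratic bound, matching the paper's $|f(x)-f(0)-v(x)x|\le 2^{-(k-1)}x\le 4x^{2}$.
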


\begin{proof}
By the construction of $f(\cdot)$, we know that it is not piecewise smooth in the neighborhood of $x = 0$ since there are infinitely many non-differentiable points. Next, we show that $f$ is strongly semismooth at $x = 0$.

Firstly, it is not difficult to verify that $f$ is Lipschitz continuous with modulus $L = \max\{|\kappa|, ~ 2\}$. Secondly, we know that $f'(0; -1) = \kappa$, and for any integer $k \geq 0$, it holds that
\[
1 + \frac{1}{3 \times 2^k}\leq f(x)/x \leq \left(1 + \frac{1}{3 \times 2^{k-1}}\right) \quad \forall \, x \in \left[\frac{1}{2^{k+1}}, \frac{1}{2^{k}}\right],
\]
which implies that
\[
f'(x; 1) = \lim_{x \downarrow 0} f(x)/{x} = 1.
\]
Therefore, $f$ is directionally differentiable at $x = 0$. Note that both $\partial f(0)$ and $\partial_{\rm B}f(0)$ are singleton when $\kappa = 1$.

Next, we show that $f(\cdot)$ is strongly G-semismooth at $x = 0$. On the one hand, for any $x < 0$, we have
\begin{eqnarray*}
|f(x) - f(0) - \kappa x| &= |\kappa x - \kappa x| &= 0.
\end{eqnarray*}
On the other hand, for any integer $k \geq 1$ and $x \in \left[\frac{1}{2^{k+1}}, \frac{1}{2^k}\right]$, we know
\[
1 + 2^{-k} \leq |v| \leq 1 + 2^{1 - k}, \quad \forall \, v \in \partial f(x),
\]
which implies that
\begin{equation*}
\left|f(x) - f(0) - v(x) x \right| = \left|-\frac{1}{3}\left(\frac{1}{4^k}\right) + \left(1 + \frac{1}{2^k}\right)x - v(x)x\right| \leq \frac{1}{2^{k-1}}x \leq 4 x^2.
\end{equation*}
Therefore,
\[
|f(x) - f(0) - v(x)x| = O(|x|^2), \quad \forall \, x \to 0.
\]
The proof of the proposition is completed.
\end{proof}

We end this subsection by illustrating the numerical performance of \cref{alg: classic-secant} for finding the root of $f(x)$ given in \cref{eq: 1d-construction} with $\kappa = 1$. Note that $x^* = 0$ is the unique solution. In \cref{alg: classic-secant}, we choose $x^0 = 0.5$ and $x^{-1} = x^0 + 0.1 \times f(0.5)^2 = 0.545$. The numerical results are shown in \cref{tab: 1d-verification}.
\begin{table}[tbhp]
{\footnotesize
\caption{The numerical performance of \cref{alg: classic-secant} on finding the zero of \cref{eq: 1d-construction}.}\label{tab: 1d-verification}
\begin{center}
\vspace{-3mm}
\begin{tabular}{c|cccccccc} \hline
Iter &  1 & 2 & 3 & 4 & 5&6&7&8 \\ \hline
$x$ & 1.7e-1 & 3.6e-2 & 4.0e-3 & 1.0e-4 & 2.7e-7 & 2.0e-11 & 4.0e-18 & 6.1e-29\\
$f(x)$ &	1.9e-1 & 	3.7e-2	& 4.0e-3 &	1.0e-4	& 2.7e-7 & 2.0e-11 & 4.0e-18 & 6.1e-29\\ \hline
\end{tabular}
\end{center}
}
\end{table}
We can observe that the generated sequence $\{x_k\}$ converges to the solution $x^* = 0$ superlinearly with R-order at least $(1+\sqrt{5})/2$.

\subsection{A globally convergent secant method for \cref{eq: main-prob}}
In this section, we propose a globally convergent secant method for solving \cref{eq: main-prob} via finding the root of \cref{eq: root-finding-levelset-lst}. The algorithm is described in \cref{alg:hybrid-levelset}.

\begin{algorithm}
	\caption{A globally convergent secant method for \cref{eq: main-prob}}
	\label{alg:hybrid-levelset}
	\begin{algorithmic}[1]
		\STATE{\textbf{Input}: $A \in \mathbb{R}^{m \times n}$, $b \in \mathbb{R}^n$, $\mu \in (0,1)$, $\lambda_{-1}, \lambda_{0}, \lambda_1$ in $(0, \lambda_{\infty})$ satisfying $\varphi(\lambda_0) > \varrho$, and $\varphi(\lambda_{-1}) < \varrho$.}
		\STATE{\textbf{Initialization}: Set $i = 0$, $\underline{\lambda} = \lambda_{-1}$, and $\overline{\lambda} = \lambda_0$.}
		\FOR{$k = 1, 2, \dots$}
		\STATE{Compute
			\begin{equation}\label{eq:secant_steps}
			\hat{\lambda}_{k+1} = \lambda_k - \frac{\lambda_{k} - \lambda_{k-1}}{\varphi(\lambda_k) - \varphi(\lambda_{k-1})}(\varphi(\lambda_k) - \varrho).
			\end{equation}
		}
		\IF{$\hat{\lambda}_{k+1} \in [\lambda_{-1}, \lambda_0]$}
		\STATE{Compute $x(\hat{\lambda}_{k+1})$ and $\varphi(\hat{\lambda}_{k+1})$. Set $i = i+1$.}
		\STATE{\textbf{if} either (i) or (ii) holds: (i) $i \geq 3$ and $|\varphi(\hat{\lambda}_{k+1}) - \varrho| \leq \mu |\varphi(\lambda_{k-2}) - \varrho|$ (ii) $i < 3$, \textbf{then} set $\lambda_{k+1} = \hat{\lambda}_{k+1}$, $x(\lambda_{k+1}) = x(\hat{\lambda}_{k+1})$; \textbf{else} go to line 9.}
		\ELSE
		\STATE{\textbf{if} $\varphi(\hat{\lambda}_{k+1}) > \varrho$, \textbf{then} set $\overline{\lambda} = \min\{\overline{\lambda}, \hat{\lambda}_{k+1}\}$; \textbf{else} set $\underline{\lambda} = \max\{\underline{\lambda}, \hat{\lambda}_{k+1}\}$}.
		\STATE{Set $\lambda_{k+1} = 1/2(\overline{\lambda} + \underline{\lambda})$. Compute $x(\lambda_{k+1})$ and $\varphi(\lambda_{k+1})$. Set $i = 0$.}
		\ENDIF
		\STATE{\textbf{if} $\varphi(\lambda_{k+1}) > \varrho$, \textbf{then} set $\overline{\lambda} = \min\{\overline{\lambda}, \lambda_{k+1}\}$; \textbf{else} set $\underline{\lambda} = \max\{\underline{\lambda}, \lambda_{k+1}\}$}.
		\ENDFOR
		\STATE{\textbf{Output}: $x(\lambda_k)$ and $\lambda_k$}.
	\end{algorithmic}
\end{algorithm}

\begin{theorem}
\label{thm: convergence-hybrid-secant}
 Let $p(\cdot)$ be a gauge function and assume that $0 < \lambda_{\infty} < +\infty$. Denote $\lambda^*$ as the solution to \cref{eq: root-finding-levelset-lst}. {Then} \cref{alg:hybrid-levelset} is well defined and the sequences $\{\lambda_k\}$ and $\{x(\lambda_k)\}$ converge to $\lambda^*$ and a solution $x(\lambda^*)$ to \cref{eq: main-prob}, respectively. Denote $e_k = \lambda_k - \lambda^*$ for all $k \geq 1$. Suppose that both $d^+$ and $d^-$ of $\varphi(\cdot)$ at $\lambda^*$ as defined in \cref{def: lateral-derivative} are positive, the following properties hold for all sufficiently large integer $k$:
\begin{romannum}
\item If $\varphi(\cdot)$ is semismooth at $\lambda^*$, then $|e_{k+3}| = o(|e_k|)$;
\item if $\varphi(\cdot)$ is $\gamma$-order semismooth at $\lambda^*$ for some $\gamma > 0$, then $|e_{k+3}| = O(|e_k|^{1 +\gamma})$;
\item if $\partial\varphi(\lambda^*)$ is a singleton and $\varphi(\cdot)$ is semismooth at $\lambda^*$, then $\{e_{k}\}$ converges to zero Q-superlinearly; if {$p(\cdot)$ is further assumed to be polyhedral and} $\partial\varphi(\lambda^*)$ is a singleton, then $\{e_{k}\}$ converges to zero superlinearly with Q-order $(1 + \sqrt{5})/2$.
\end{romannum}
\end{theorem}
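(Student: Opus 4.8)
The plan is to separate the \emph{global} part of \cref{thm: convergence-hybrid-secant} (well-definedness, $\lambda_k\to\lambda^*$, $x(\lambda_k)\to x(\lambda^*)$), which does not need the sign hypothesis on $d^{\pm}$, from the \emph{local rate} parts (i)--(iii). For the rates I would show that a tail of the sequence produced by \cref{alg:hybrid-levelset} coincides with the iteration of \cref{alg: classic-secant} applied to $f(\cdot):=\varphi(\cdot)-\varrho$, and then quote \cref{thm: convergence-classic-secant} and \cref{thm: R-order-secant}.

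\emph{Global convergence.} By \cref{prop: strict-monotonicity-tame}, $\varphi(\cdot)$ is continuous and strictly increasing on $(0,\lambda_{\infty}]$, so \cref{eq: root-finding-levelset-lst} has a unique root $\lambda^*$, and $\varphi(\lambda_{-1})<\varrho<\varphi(\lambda_0)$ forces $\lambda^*\in(\lambda_{-1},\lambda_0)$; strict monotonicity also gives $\varphi(\lambda_k)\ne\varphi(\lambda_{k-1})$ whenever $\lambda_k\ne\lambda_{k-1}$, so \cref{eq:secant_steps} is well defined unless the iteration has already reached the root. I would first verify by induction that the safeguard interval $[\underline{\lambda},\overline{\lambda}]$ always contains $\lambda^*$: the updates only raise $\underline{\lambda}$ to points where $\varphi<\varrho$ (hence $\le\lambda^*$) and lower $\overline{\lambda}$ to points where $\varphi>\varrho$ (hence $\ge\lambda^*$), by the sign of $\varphi-\varrho$. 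Then I would split into two cases. If only finitely many bisection steps occur, the stretch after the last bisection consists of accepted secant steps on which the counter eventually satisfies $i\ge3$, so the test $|\varphi(\hat\lambda_{k+1})-\varrho|\le\mu|\varphi(\lambda_{k-2})-\varrho|$ holds; hence the residual contracts by the factor $\mu$ over every three steps, giving $\varphi(\lambda_k)\to\varrho$ and thus $\lambda_k\to\lambda^*$ by continuity and strict monotonicity. If infinitely many bisection steps occur, the length of $[\underline{\lambda},\overline{\lambda}]$ is halved infinitely often, so the bracket midpoints (which are iterates) tend to $\lambda^*$, and a standard secant-step estimate propagates convergence to the intervening accepted iterates, so again $\lambda_k\to\lambda^*$. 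Finally $b-Ax(\lambda_k)=y(\lambda_k)$ is continuous in $\lambda$ (\cref{prop: strict-monotonicity-tame}) and $\varphi(\lambda_k)=\|Ax(\lambda_k)-b\|\to\varrho$, so any accumulation point $\hat x$ of $\{x(\lambda_k)\}$ is feasible for \cref{eq: main-prob} with active constraint and optimal for \cref{eq: reg-LS-form} at $\lambda^*$, hence solves \cref{eq: main-prob} by the equivalence in \cite{li2018efficiently,Exactreg07}; together with a boundedness argument for $\{x(\lambda_k)\}$ this yields $x(\lambda_k)\to x(\lambda^*)$.

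\emph{Reduction to the plain secant method --- the main obstacle.} Under $d^{+},d^{-}>0$, \cref{thm: convergence-classic-secant} furnishes neighborhoods $\mathcal{U}\subseteq\mathcal{N}$ of $\lambda^*$, which I would shrink so that $\mathcal{N}\subseteq(\lambda_{-1},\lambda_0)$. Since $\lambda_k\to\lambda^*$, there is $K$ with $\lambda_k\in\mathcal{U}$ for all $k\ge K$. The crux is to show by induction on $k\ge K$ that \cref{alg:hybrid-levelset} takes only secant steps thereafter, so that no further bisection occurs, the counter $i$ is never reset, and $\{\lambda_k\}_{k\ge K-1}$ is exactly the \cref{alg: classic-secant} sequence started from $(\lambda_{K-1},\lambda_K)\in\mathcal{U}\times\mathcal{U}$. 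For the bracket test: $\hat\lambda_{k+1}$ depends only on $(\lambda_{k-1},\lambda_k)$ and equals the \cref{alg: classic-secant} iterate from that pair, so \cref{thm: convergence-classic-secant} keeps it in $\mathcal{N}\subseteq[\lambda_{-1},\lambda_0]$. For the sufficient-decrease test: once $i\ge3$ the three preceding iterates are consecutive \cref{alg: classic-secant} iterates, so the $3$-step Q-superlinearity in \cref{thm: convergence-classic-secant} gives $|\hat\lambda_{k+1}-\lambda^*|=o(|\lambda_{k-2}-\lambda^*|)$; combined with the two-sided bound $c\,|\lambda-\lambda^*|\le|\varphi(\lambda)-\varrho|\le L\,|\lambda-\lambda^*|$ near $\lambda^*$ --- the upper bound from local Lipschitz continuity of $\varphi$, the lower bound immediate from the definition of the lateral derivatives and $\min\{d^{+},d^{-}\}>0$ --- this gives $|\varphi(\hat\lambda_{k+1})-\varrho|=o(|\varphi(\lambda_{k-2})-\varrho|)\le\mu|\varphi(\lambda_{k-2})-\varrho|$ for $k$ large. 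I expect the fiddliest point to be the bookkeeping around the counter $i$ (stray bisection steps before the iterates enter $\mathcal{U}$ may reset it, and the ``$i<3$'' branch accepts the first few secant steps unconditionally), but those steps still land in $\mathcal{N}$ and feed the same induction, so they are harmless.

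\emph{Conclusion.} Once the tail of $\{\lambda_k\}$ is identified with a \cref{alg: classic-secant} run started in $\mathcal{U}$, part (i) is the $3$-step Q-superlinear estimate and part (ii) the $\gamma$-order estimate of \cref{thm: convergence-classic-secant}, read along that tail. Part (iii) follows from \cref{thm: R-order-secant} applied to $f=\varphi-\varrho$ at $\lambda^*$: if $\partial\varphi(\lambda^*)$ is a singleton then $d^{+}=d^{-}=v^*$ with $v^*\ne0$, so the nondegeneracy hypothesis of \cref{thm: R-order-secant} is met, yielding Q-superlinear convergence when $\varphi$ is semismooth and Q-order $(1+\sqrt{5})/2$ when it is strongly semismooth. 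Finally, when $p(\cdot)$ is a polyhedral gauge the standing hypothesis $d^{+},d^{-}>0$ is automatic: by \cref{prop: property-H-gamma} every element of $\partial\varphi(\lambda^*)$ --- in particular $d^{\pm}\in\partial_{\rm B}\varphi(\lambda^*)$ --- is positive.
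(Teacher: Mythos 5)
Your proposal is correct and follows essentially the same route as the paper: global convergence from the bisection-safeguard/sufficient-decrease dichotomy, then showing via \cref{thm: convergence-classic-secant} and \cref{lemma: lateral-derivative-approx} that the secant candidates eventually stay in range and pass the $\mu$-decrease test (so the tail coincides with \cref{alg: classic-secant}), and finally reading off the rates from \cref{thm: convergence-classic-secant} and \cref{thm: R-order-secant}. Your two-sided bound $c\,|\lambda-\lambda^*|\le|\varphi(\lambda)-\varrho|\le L\,|\lambda-\lambda^*|$ plays exactly the role of the paper's ratio estimate $\delta_\varphi(\hat{\lambda}_{k+3},\lambda^*)/\delta_\varphi(\lambda_k,\lambda^*)\le \bigl(\max\{d^+,d^-\}+o(1)\bigr)/\bigl(\min\{d^+,d^-\}+o(1)\bigr)$, and the extra bookkeeping you supply (bracket invariant, case split on finitely/infinitely many bisections, convergence of $x(\lambda_k)$) only fills in details the paper leaves implicit.
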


\begin{proof}
When $p(\cdot)$ is a gauge function, we know from \cref{prop: strict-monotonicity-tame} that $\varphi(\cdot)$ is strictly increasing on $(0, \lambda_{\infty}]$, which implies that the sequences $\{\hat{\lambda}_k\}$ and $\{\lambda_k\}$ generated in \cref{alg:hybrid-levelset} are well defined. For any $k \geq 1$, if we run the algorithm for three more iterations, then it holds that either (a) $(\overline{\lambda} - \underline{\lambda})$ will reduce at least half; or (b) $|\varphi(\lambda_{k+3}) - \varrho| \leq \mu |\varphi(\lambda_k) - \varrho|$. Therefore, the sequence $\{\lambda_k\}$ will converge to $\lambda^*$. Suppose that $\varphi(\cdot)$ is semismooth at $\lambda^*$ and both $d^+$ and $d^{-}$ are positive. We know from \cref{thm: convergence-classic-secant} that there exists a positive integer $k_{\rm max}$ such that for all $k \geq k_{\rm max}$, $\hat{\lambda}_k \in [\lambda_{-1}, \lambda_{0}]$ and
\begin{equation}
\label{eq: 3-steps-superlinear}
    |\hat{\lambda}_{k+3} - \lambda^*| = o(|\lambda_k - x^*|).
\end{equation}
Therefore, it follows from \cref{lemma: lateral-derivative-approx} that
\[
\frac{|\varphi(\hat{\lambda}_{k+3}) - \varrho|}{|\varphi(\lambda_k) - \varrho|} = \frac{\delta_\varphi(\hat{\lambda}_{k+3}, \lambda^*)}{\delta_\varphi(\lambda_k, \lambda^*)} \times \frac{|\hat{\lambda}_{k+3} - \lambda^*|}{|\lambda_k - \lambda^*|} \leq \frac{\max\{d^+, d^-\} + o(1)}{\min\{d^+, d^-\} + o(1)} \times o(1).
\]
Thus, for all $k \geq k_{\rm max}$,
\[
|\varphi(\hat{\lambda}_{k+3}) - \varrho| \leq \mu |\varphi(\lambda_{k}) - \varrho|.
\]
The rest of the proof of this theorem follows from \cref{thm: convergence-classic-secant} {and \cref{prop:polyhedral-gauge-Q-order}}.
\end{proof}

To better illustrate the efficiency of \cref{alg:hybrid-levelset}, we will compare its performance to the HS-Jacobian based semismooth Newton method for solving \cref{eq: root-finding-levelset-lst} on the least-squares constrained Lasso problem, {where the HS-Jacobian is available}. The following proposition shows that for the least-squares constrained Lasso problem, $\partial_{\rm HS}\varphi(\bar{\lambda})$ is positive for any $\bar{\lambda} \in (0, \lambda_{\infty})$.

\begin{proposition} \label{eq:HS-Jacobian-ell1}
Suppose that $p(\cdot)$ is a polyhedral gauge function and $\partial p(0)$ has the expression as in \cref{eq: representation-p-polar}. Assume that $0 < \lambda_{\infty} < + \infty$ and let $\bar{\lambda} \in (0, \lambda_{\infty})$ be arbitrarily chosen. Let $\mathcal{B}(\bar{\lambda})$ and $\mathcal{V}(\bar{\lambda})$ be the sets defined as in \cref{B-set-lasso} and \cref{eq: Jacobian-varphi} for $\lambda = \bar{\lambda}$. If $d_K \neq 0$ for all $K \in \mathcal{B}(\bar{\lambda})$, then $v > 0$ for all $v \in \mathcal{V}(\bar{\lambda})$. Moreover, $d_K \neq 0$ for all $K \in \mathcal{B}(\bar{\lambda})$ when $p(\cdot) = \|\cdot\|_1$.
\end{proposition}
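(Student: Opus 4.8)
The plan is to trace the positivity of each element of $\mathcal{V}(\bar{\lambda})$ back to the non-vanishing of the vectors $h^K$ that generate $\mathcal{H}(\bar{\lambda})$, which in turn is forced by $d_K\neq 0$ together with the full column rank condition built into the definition of $\mathcal{B}(\bar{\lambda})$; the $\ell_1$ case will then follow by inspecting the explicit polyhedral description of $\partial p(0)$.

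First I would fix an arbitrary $K\in\mathcal{B}(\bar{\lambda})$. By \cref{B-set-lasso} the matrix $\widehat{A}_{:K}$ has full column rank, so $\widehat{A}_{:K}^T\widehat{A}_{:K}$ is invertible and $\widehat{A}_{:K}$ is injective; hence $h^K=\widehat{A}_{:K}(\widehat{A}_{:K}^T\widehat{A}_{:K})^{-1}d_K$ can vanish only if $d_K=0$, and so the hypothesis $d_K\neq 0$ yields $\|h^K\|^2>0$ for every $K\in\mathcal{B}(\bar{\lambda})$. Next I would verify that $\bar{\lambda}\in\mathcal{D}$, i.e. $\varphi(\bar{\lambda})>0$: by \cref{lemma: upper-semicontinuouity} one has $y(\bar{\lambda})=P^Kb+\bar{\lambda}h^K$ with $\langle P^Kb,h^K\rangle=0$, so $\varphi(\bar{\lambda})^2=\|P^Kb\|^2+\bar{\lambda}^2\|h^K\|^2\geq\bar{\lambda}^2\|h^K\|^2>0$. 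Since by \cref{eq: Jacobian-varphi} every $v\in\mathcal{V}(\bar{\lambda})$ is of the form $v=\bar{\lambda}\|h^K\|^2/\varphi(\bar{\lambda})$ for some $K\in\mathcal{B}(\bar{\lambda})$, and $\bar{\lambda}>0$, $\|h^K\|^2>0$, $\varphi(\bar{\lambda})>0$, this gives $v>0$, which settles the first assertion.

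For the ``moreover'' part I would specialize to $p(\cdot)=\|\cdot\|_1$, for which $\partial p(0)=\{u\in\mathbb{R}^n\,:\,\|u\|_\infty\leq 1\}$, so that \cref{eq: representation-p-polar} holds with $B$ the vertical stacking of $I_n$ and $-I_n$ and $d=\mathbf{1}_{2n}$; in particular every entry of $d$ equals $1$, whence $d_K\neq 0$ for every nonempty index set $K$. It then remains only to note that each $K\in\mathcal{B}(\bar{\lambda})$ is nonempty: by the same reasoning as in the paragraph following \cref{B-set-lasso}, $\bar{\lambda}\in(0,\lambda_{\infty})$ forces $x=B^T\xi\neq 0$, hence $\xi\neq 0$, for every $\xi\in\widehat{M}(\bar{\lambda})$, so any admissible $K\supseteq{\rm supp}(\xi)$ is nonempty. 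Combining the two parts completes the argument.

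The argument is largely routine; the only point that needs a little care is checking $\varphi(\bar{\lambda})>0$ so that the ratio defining $\mathcal{V}(\bar{\lambda})$ is well posed and positivity is preserved under the division, and the orthogonal splitting $y(\bar{\lambda})=P^Kb+\bar{\lambda}h^K$ from \cref{lemma: upper-semicontinuouity}, together with $h^K\neq 0$, is exactly what supplies this.
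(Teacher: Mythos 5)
Your proposal is correct and follows essentially the same route as the paper's proof: use the full column rank of $\widehat{A}_{:K}$ and $d_K\neq 0$ to get $h^K\neq 0$, then the orthogonal decomposition $y(\bar{\lambda})=P^Kb+\bar{\lambda}h^K$ from \cref{lemma: upper-semicontinuouity} to get $\varphi(\bar{\lambda})>0$ and hence $v=\bar{\lambda}\|h^K\|^2/\varphi(\bar{\lambda})>0$, and finally the explicit representation $B=[I_n~-I_n]^T$, $d=e_{2n}$ for the $\ell_1$ case. Your extra remark that every $K\in\mathcal{B}(\bar{\lambda})$ is nonempty (via $x=B^T\xi\neq 0$) only makes explicit a point the paper leaves implicit, and your injectivity argument for $h^K\neq 0$ is an equivalent variant of the paper's computation $\|h^K\|^2=\langle d_K,(\widehat{A}_{:K}^T\widehat{A}_{:K})^{-1}d_K\rangle>0$.
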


\begin{proof}
Recall that $\mathcal{B}(\bar{\lambda})$ is nonempty. Let $K \in \mathcal{B}(\bar{\lambda})$ be arbitrarily chosen. We know that $\widehat{A}_{:K}$ is of full column rank. Denote $h^K = \widehat{A}_{:K}(\widehat{A}_{:K}^T\widehat{A}_{:K})^{-1}d_K \in \mathcal{H}(\bar{\lambda})$. Since $d_K \neq 0$, it holds that
\[
\|h_K\|^2 = \langle \widehat{A}_{:K}(\widehat{A}_{:K}^T\widehat{A}_{:K})^{-1}d_K, \widehat{A}_{:K}(\widehat{A}_{:K}^T\widehat{A}_{:K})^{-1}d_K \rangle = \langle d_K, (\widehat{A}_{:K}^T\widehat{A}_{:K})^{-1}d_K \rangle > 0.
\]
Therefore, it follows from \cref{lemma: upper-semicontinuouity} and the facts $\bar{\lambda} > 0$ and $\langle P^K b, h^K \rangle = 0$ that
\[
\varphi(\bar{\lambda}) = \sqrt{\|P^K b\|^2 + \bar{\lambda}^2\|h^K\|^2} > 0,
\]
which implies that
\[
v^K = \bar{\lambda}\|h^K\|^2/\varphi(\bar{\lambda}) \in \mathcal{V}(\bar{\lambda}) \quad \mbox{ and } \quad v^K > 0.
\]
Since $K \in \mathcal{B}(\bar{\lambda})$ is arbitrarily chosen, we know that $v > 0$ for all $v \in \mathcal{V}(\bar{\lambda})$.

When $p(\cdot) = \|\cdot\|_1$, the set $\partial p(0)$ has the representation of
\[
\partial p(0) = \{u \in \mathbb{R}^n \,|\, -1 \leq u_i \leq 1, ~ i \in [n]\}.
\]
In other words, $B = [I_n ~ -I_n]^T \in \mathbb{R}^{2n \times n}$ and $d = e_{2n}$. Therefore, $d_K \neq 0$ for any $\bar{\lambda} \in (0, \lambda_{\infty})$ and $K \in \mathcal{B}(\bar{\lambda})$.
\end{proof}

The numerical results in \cref{sec: numerical-experiment} will show that the secant method and the semismooth Newton method are comparable for solving the least-squares constrained Lasso problem, which also demonstrates the high efficiency of the secant method {even for the case that the HS-Jacobian can be computed}.

\section{An adaptive sieving based secant method for \cref{eq: main-prob}}
\label{sec: AS-levelset}
A main computational challenge  (especially for high dimensional problems) for solving \cref{eq: root-finding-levelset-lst} comes from computing the function value of $\varphi(\cdot)$, which requires solving the optimization problem \cref{eq: reg-LS-form}. To address this challenge, we will incorporate a dimension reduction technique called adaptive sieving to  \cref{alg:hybrid-levelset}.

\subsection{An adaptive sieving technique for sparse optimization problems} We briefly introduce the AS technique developed in \cite{AS-YLST23} for solving sparse optimization problems of the following form:
\begin{equation}
\label{eq: general-sparse-problem}
\min_{x \in \mathbb{R}^n} \left\{
\Phi(x) + P(x)
\right\},
\end{equation}
where $\Phi:\mathbb{R}^n\rightarrow \mathbb{R}$ is a continuously differentiable convex function, and $P: \mathbb{R}^n \to (-\infty,+\infty]$ is a closed proper convex function. We assume that the convex composite optimization problem \cref{eq: general-sparse-problem} has at least one solution. Certainly, the optimization problem \cref{eq: reg-LS-form} is a special case of \cref{eq: general-sparse-problem}. We define the proximal residual function $R: \mathbb{R}^n \to \mathbb{R}^n$ as
\begin{equation}
R(x) := x - {\rm Prox}_{P}(x - \nabla\Phi(x)), \quad x \in \mathbb{R}^n.
\end{equation}
The norm of $R(x)$ is a standard measurement for the quality of an obtained solution, and $x$ is a solution to \cref{eq: general-sparse-problem} if and only if $R(x) = 0$.

Let $I \subseteq [n]$ be an index set. We consider the following constrained optimization problem with the index set $I$:
\begin{equation}
\label{eq: AS-general-sub}
\min_{x \in \mathbb{R}^n} ~\left\{ \Phi(x) + P(x) \ | \  x_{I^c} = 0
\right\},
\end{equation}
where $I^c = [n] \backslash I$ is the complement of $I$. A key fact is that, a solution to \cref{eq: AS-general-sub} is also a solution to \cref{eq: general-sparse-problem} if there exists a solution $\bar{x}$ to \cref{eq: general-sparse-problem} such that $supp(\bar{x}) \subseteq I$. The AS technique is motivated by this fact. Specifically, starting with a reasonable guessing $I_0 \subseteq [n]$, the AS technique is an adaptive strategy to refine the current index set $I_k$ based on a solution to \cref{eq: AS-general-sub} with $I = I_k$. We present the details of the AS technique for solving \cref{eq: general-sparse-problem} in \cref{alg:screening}.

\begin{algorithm}
\caption{An adaptive sieving strategy for solving \eqref{eq: general-sparse-problem}}
	\label{alg:screening}
	\begin{algorithmic}[1]
		\STATE \textbf{Input}: an initial index set $I_0 \subseteq [n]$, a given tolerance $\epsilon\geq 0$ and a given positive integer $k_{\max}$ (e.g., $k_{\max}=500$).
		\STATE \textbf{Output}: a solution $x^*$ to the problem \eqref{eq: general-sparse-problem} satisfying $\|R(x^*)\|\leq \epsilon$.
		\STATE \textbf{1}. Find
		\begin{align}
		x^{0} \in \underset{ x\in \mathbb{R}^n} {\arg\min} \  \Big\{\Phi(x)  +P(x) - \langle \delta^0,x\rangle	\ \mid \ x_{I_0^c}=0\Big\},\label{eq: lambda0_problem}
		\end{align}
		where $\delta^0\in \mathbb{R}^n$ is an error vector such that $\|\delta^0\|\leq \epsilon$ and $(\delta^0)_{I_0^c}=0$.
		
		\textbf{2}. Compute $R(x^0)$ and set $s=0$.
		\WHILE{$\|R(x^{s})\|> \epsilon$}
		\STATE \textbf{3.1}. Create $J_{s+1}$ as
		\begin{align}
		J_{s+1} = \Big\{ j\in I_s^c\; \mid \; (R(x^s))_j\neq 0\Big\}.\label{eq: create_J}
		\end{align}
        If $J_{s+1} = \emptyset$, let $I_{s+1} \leftarrow I_{s}$; otherwise, let $k$ be a positive integer satisfying $k \leq \min \{ |J_{s+1}|, k_{\max}\}$ and define
         \[\scriptsize
         \widehat{J}_{s+1} = \Big\{j \in J_{s+1} \,|\, \mbox{ $|(R(x^s))_j|$ is among the first $k$ largest values in $\{ |(R(x^s))_i| \}_{i \in J_{s+1} } $ } \Big\}.
        \]
        Update $I_{s+1}$ as:		
		\begin{align*}
		I_{s+1} \leftarrow I_{s} \cup \widehat{J}_{s+1}.
		\end{align*}
		\STATE \textbf{3.2}. Solve the constrained problem:
		\begin{align}
		x^{s+1} \in \underset{ x\in \mathbb{R}^n} {\arg\min} \  \Big\{\Phi(x)  + P(x) - \langle \delta^{s+1},x\rangle \ \mid \ x_{I_{s+1}^c}=0\Big\},\label{eq: constrained}
		\end{align}
		where $\delta^{s+1}\in \mathbb{R}^n$ is an error vector such that $\|\delta^{s+1}\|\leq \epsilon$ and $(\delta^{s+1})_{I_{s+1}^c}=0$.
		\STATE \textbf{3.3}: Compute $R(x^{s+1})$ and set $s\leftarrow s+1$.
		\ENDWHILE
		\STATE \textbf{return}: Set $x^*=x^{s}$.
	\end{algorithmic}
\end{algorithm}

It is worthwhile mentioning that, in \cref{alg:screening}, the error vectors $\delta^0$, $\{\delta^{s+1}\}$ in \eqref{eq: lambda0_problem} and \eqref{eq: constrained} are not given but imply that the corresponding minimization problems can be solved inexactly. We can just take $\delta^s = 0$ (for $s \geq 0$) if we solve the reduced subproblems exactly. The following proposition shows that we can obtain an inexact solution by solving a reduced problem with a much smaller dimension.
\begin{proposition}[\mbox{\cite[Proposition 1]{AS-YLST23}}]
\label{lemma: YLST-21-prop1}
For any given nonnegative integer $s$, the updating rule of $x^s$ in Algorithm \ref{alg:screening} can be interpreted in the procedure as follows. Let $M_s$ be a linear map from $\mathbb{R}^{|I_s|}$ to $\mathbb{R}^n$ defined as
	\begin{align*}
		(M_s z)_{I_s} = z,\quad (M_s z)_{I_s^c} = 0,\quad z\in \mathbb{R}^{|I_s|},
	\end{align*}
    and $\Phi^s$, $P^s$ be functions from $\mathbb{R}^{|I_s|}$ to $\mathbb{R}$ defined as $\Phi^s(z):= \Phi(M_s z)$, $P^s(z) := P(M_s z)$ for all $z\in \mathbb{R}^{|I_s|}$. Then $x^s\in \mathbb{R}^n$ can be computed as
	\begin{align*}
	(x^s)_{I_s}:={\rm Prox}_{P^s}(\hat{z}-\nabla \Phi^s(\hat{z})),
	\end{align*}
	and $(x^s)_{I_s^c}=0$, where $\hat{z}$ is an approximate solution to the problem
	\begin{align}
		\min_{z\in \mathbb{R}^{|I_s|}}\ \Big\{ \Phi^s(z) + P^s(z)\Big\},\label{eq: reduced_0}
	\end{align}
    which satisfies
	\begin{align}
	\|\hat{z}-{\rm Prox}_{P^s}(\hat{z}-\nabla \Phi^s(\hat{z}))+\nabla \Phi^s({\rm Prox}_{P^s}(\hat{z}-\nabla \Phi^s(\hat{z})))-\nabla \Phi^s(\hat{z})\|\leq \epsilon,\label{eq: control_delta0}
	\end{align}
	and $\epsilon$ is the parameter given in Algorithm \ref{alg:screening}.
\end{proposition}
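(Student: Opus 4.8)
The plan is to show that the abstract update \eqref{eq: constrained} (and \eqref{eq: lambda0_problem} when $s=0$), which only requires an inexact minimizer together with an error vector supported on $I_s$, is exactly realized by the displayed construction that first solves the reduced problem \eqref{eq: reduced_0} approximately and then applies one proximal step. The first step is to use the linear constraint $x_{I_s^c}=0$ to reduce \eqref{eq: constrained} to a problem over $\mathbb{R}^{|I_s|}$: every feasible $x$ has the form $x=M_s z$, and since $M_s^{T}v=v_{I_s}$ and $(\delta^s)_{I_s^c}=0$, the objective becomes $\Phi^s(z)+P^s(z)-\langle(\delta^s)_{I_s},z\rangle$ with $\Phi^s=\Phi\circ M_s$, $P^s=P\circ M_s$ and, by the chain rule, $\nabla\Phi^s(z)=M_s^{T}\nabla\Phi(M_s z)=(\nabla\Phi(M_s z))_{I_s}$. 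Hence $(x^s)_{I_s}$ is an exact minimizer of this reduced perturbed problem, which, by convexity and the sum rule $\partial(\Phi^s+P^s)=\nabla\Phi^s+\partial P^s$ (valid because $\Phi^s$ is finite-valued and differentiable), is equivalent to the inclusion $(\delta^s)_{I_s}-\nabla\Phi^s((x^s)_{I_s})\in\partial P^s((x^s)_{I_s})$.

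The second step builds $(x^s,\delta^s)$ from $\hat z$. Given any $\hat z\in\mathbb{R}^{|I_s|}$ satisfying the stopping test \eqref{eq: control_delta0}, I would set $w:={\rm Prox}_{P^s}(\hat z-\nabla\Phi^s(\hat z))$, $(x^s)_{I_s}:=w$, $(x^s)_{I_s^c}:=0$, and \emph{define} the error vector by $(\delta^s)_{I_s}:=\hat z-w+\nabla\Phi^s(w)-\nabla\Phi^s(\hat z)$ and $(\delta^s)_{I_s^c}:=0$. By the variational characterization of the proximal mapping, $w={\rm Prox}_{P^s}(\hat z-\nabla\Phi^s(\hat z))$ is equivalent to $(\hat z-\nabla\Phi^s(\hat z))-w\in\partial P^s(w)$, and a one-line substitution shows this is exactly $(\delta^s)_{I_s}-\nabla\Phi^s(w)\in\partial P^s(w)$, i.e.\ the optimality inclusion obtained in the first step. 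Therefore $x^s$, together with this $\delta^s$, is an admissible iterate of \eqref{eq: constrained}. It remains to check the two requirements on $\delta^s$: $(\delta^s)_{I_s^c}=0$ holds by construction, and $\|\delta^s\|=\|(\delta^s)_{I_s}\|=\|\hat z-w+\nabla\Phi^s(w)-\nabla\Phi^s(\hat z)\|\le\epsilon$ is precisely the quantity controlled by \eqref{eq: control_delta0}. The case $s=0$ is identical with $I_0$ and \eqref{eq: lambda0_problem} in place of $I_s$ and \eqref{eq: constrained}.

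I do not expect a genuine analytic obstacle: the argument is an unwinding of the definition of the proximal map together with the reduction induced by the coordinate-selection operator $M_s$. The one point that needs care is the logical direction. One should \emph{start} from $\hat z$ and construct $(x^s,\delta^s)$, rather than attempting to recover $\hat z$ from a prescribed inexact solution $x^s$ of \eqref{eq: constrained}, which would require inverting the generally non-monotone map $z\mapsto z-\nabla\Phi^s(z)$; the forward construction is all that is needed to justify running Algorithm~\ref{alg:screening} through the reduced problems. A second, minor bookkeeping point is to record that $M_s^{T}$ acts as restriction to $I_s$, so that the inner product $\langle\delta^s,x\rangle$, the gradient $\nabla\Phi^s$, and the norm $\|\delta^s\|$ all transform consistently under the embedding.
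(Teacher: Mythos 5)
Your argument is correct, and since the paper itself does not prove this proposition---it is quoted directly from \cite[Proposition 1]{AS-YLST23}---the comparison is with the proof given there, which proceeds in essentially the same way as yours: starting from $\hat z$, set $w={\rm Prox}_{P^s}(\hat z-\nabla \Phi^s(\hat z))$, define $(\delta^s)_{I_s}:=\hat z-w+\nabla \Phi^s(w)-\nabla \Phi^s(\hat z)$ and $(\delta^s)_{I_s^c}:=0$, so that the proximal inclusion yields exact optimality of $x^s=M_s w$ for the perturbed reduced problem while \eqref{eq: control_delta0} gives $\|\delta^s\|\leq\epsilon$. Your observation about the logical direction---constructing $(x^s,\delta^s)$ forward from $\hat z$ rather than trying to recover $\hat z$ from a prescribed inexact solution---is precisely the intended reading of the inexactness in Algorithm~\ref{alg:screening}.
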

The finite termination property of \cref{alg:screening} for solving \cref{eq: general-sparse-problem} is shown in the following proposition.
\begin{proposition}
[\mbox{\cite[Theorem 1]{AS-YLST23}}]
\label{finite-termination-AS}
For any given initial index set $I_0 \subseteq [n]$ and tolerance $\epsilon \geq 0$, the while loop in Algorithm \ref{alg:screening} will terminate after a finite number of iterations.
\end{proposition}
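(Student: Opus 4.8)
The plan is to establish finite termination by a counting argument on the index sets. Once I show that, as long as the while loop in \cref{alg:screening} has not yet stopped, the generated index sets form a strictly increasing chain $I_0 \subsetneq I_1 \subsetneq I_2 \subsetneq \cdots$ inside the finite set $[n]$, the result is immediate: a strictly increasing chain of subsets of $[n]$ has length at most $n - |I_0|$, so the loop body can be executed only finitely many times. This argument will cover $\epsilon \ge 0$ (including $\epsilon = 0$) uniformly.

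The crux is the dichotomy that, at any pass of the loop for which $\|R(x^s)\| > \epsilon$, the set $J_{s+1}$ created in Step 3.1 is nonempty. I would prove this by contradiction: suppose $J_{s+1} = \emptyset$, i.e.\ $(R(x^s))_j = 0$ for every $j \in I_s^c$. By \cref{lemma: YLST-21-prop1}, $x^s$ is supported on $I_s$ and $(x^s)_{I_s}$ is an inexact minimizer of the reduced problem \eqref{eq: reduced_0} in the sense of the residual bound \eqref{eq: control_delta0}. Using the non-expansiveness of ${\rm Prox}_{P^s}$ together with the identification of the reduced proximal residual of $(x^s)_{I_s}$ with the $I_s$-block of the full proximal residual $R(x^s)$, the bound \eqref{eq: control_delta0} yields $\|(R(x^s))_{I_s}\| \le \epsilon$. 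Combined with $(R(x^s))_{I_s^c} = 0$, this gives $\|R(x^s)\| \le \epsilon$, contradicting the loop condition; hence $J_{s+1} \neq \emptyset$.

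It then remains to turn nonemptiness of $J_{s+1}$ into a strict enlargement of the index set. Since $J_{s+1} \neq \emptyset$, the integer $k$ selected in Step 3.1 is a positive integer with $k \le \min\{|J_{s+1}|, k_{\max}\}$, so $\widehat{J}_{s+1}$ is a nonempty subset of $J_{s+1} \subseteq I_s^c$; therefore $I_{s+1} = I_s \cup \widehat{J}_{s+1} \supsetneq I_s$ (in particular $I_s \subsetneq [n]$ whenever the loop runs), and the branch ``$J_{s+1} = \emptyset \Rightarrow I_{s+1} \leftarrow I_s$'' is never taken while the loop is active. This produces the strictly increasing chain and completes the argument. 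The one step I expect to require genuine care — and essentially the only place where the specific structure of \cref{alg:screening} and \cref{lemma: YLST-21-prop1} enters — is the identification used above: that restricting \eqref{eq: general-sparse-problem} to the coordinate subspace $\{x \in \mathbb{R}^n : x_{I_s^c} = 0\}$ is compatible with the proximal map along that subspace, so that the reduced-problem residual controlled by \eqref{eq: control_delta0} really equals $(R(x^s))_{I_s}$. Everything else is elementary finiteness bookkeeping.
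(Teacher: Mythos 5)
Your argument is correct, and it is essentially the argument behind the result the paper simply cites (the paper gives no proof of its own, quoting Theorem 1 of the adaptive-sieving reference): one shows $J_{s+1}\neq\emptyset$ whenever $\|R(x^s)\|>\epsilon$, so the index sets strictly increase inside $[n]$ and the loop stops after at most $n-|I_0|$ passes. The identification you flag as the delicate step does go through: under the contradiction hypothesis $(R(x^s))_{I_s^c}=0$, the point ${\rm Prox}_{P}(x^s-\nabla\Phi(x^s))$ is supported on $I_s$, hence its $I_s$-block equals ${\rm Prox}_{P^s}\bigl((x^s)_{I_s}-\nabla\Phi^s((x^s)_{I_s})\bigr)$, and combining the exact optimality of $(x^s)_{I_s}$ for the $\delta^s$-tilted reduced problem with the nonexpansiveness of ${\rm Prox}_{P^s}$ and the bound \eqref{eq: control_delta0} yields $\|(R(x^s))_{I_s}\|\leq\epsilon$, the desired contradiction.
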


The high efficiency of the AS technique for solving a wide class of sparse optimization problems in the form of \cref{eq: general-sparse-problem} has been demonstrated in \cite{AS-YLST23,AS-YST22,LQ-AS-23,WCLS-23}.

\subsection{An adaptive sieving based secant method for \cref{eq: main-prob}} When applying the level-set method to solve \cref{eq: main-prob}, one needs to solve a sequence of regularized problems in the form of \cref{eq: reg-LS-form} with $\lambda \in \{\lambda_k\}_{k \geq 0}$ generated by the root-finding algorithm for solving \cref{eq: root-finding-levelset-lst}. This naturally motivates us to incorporate \cref{alg:screening} into \cref{alg:hybrid-levelset} for solving the regularized least-squares problems in the form of \cref{eq: reg-LS-form} since we can effectively construct an initial index set for the AS technique based on the solution of the previous problem on the solution path. For the first problem on the solution path, we will choose $\lambda_0$ to be relatively large such that its solution is highly sparse. In such a way, we choose $I_0 = \emptyset$ in \cref{alg:screening} for the first problem.  We present the details in \cref{alg: AS-secant}.

\begin{algorithm}
	\caption{SMOP: A root finding based secant method for \cref{eq: main-prob}}
	\label{alg: AS-secant}
	\begin{algorithmic}[1]
		\STATE{\textbf{Input}: $A \in \mathbb{R}^{m \times n}$, $b \in \mathbb{R}^n$, $\mu \in (0,1)$, $0< \underline{\lambda} < \lambda_{1} < \lambda_0 \leq \overline{\lambda} \leq \lambda_{\infty}$ satisfying $\varphi(\underline{\lambda}) < \varrho < \varphi(\overline{\lambda})$. }
		\STATE{
        \textbf{Step 1}. Call \cref{alg:screening} with $I_0 = \emptyset$ to solve \cref{eq: reg-LS-form} with $\lambda = \lambda_0$ and obtain the solution $x(\lambda_0)$. Compute $\varphi(\lambda_0)$.}
        \WHILE{$\varphi(\lambda_k) \neq \varrho$}
        \STATE{\textbf{Step 2.1}. Set $k = k+1$. Set $I^k_0 = \{i \in [n] \,|\, (x(\lambda_{k-1}))_i \neq 0\}$.}
        \STATE{
        \textbf{Step 2.2}. Call \cref{alg:screening} with $I_0 = I_0^k$ to solve \cref{eq: reg-LS-form} with $\lambda = \lambda_k$ to obtain $x(\lambda_k)$ and compute $\varphi(\lambda_k)$.}
        \STATE{\textbf{Step 2.3}. Generate $\lambda_{k+1}$ by \cref{alg:hybrid-levelset}.}
        \ENDWHILE
        \STATE{\textbf{Step 3}. Set $\lambda^* = \lambda_k$ and $x^* = x(\lambda_k)$.}
        \STATE{\textbf{Return}: $\lambda^*$ and $x^*$.}
	\end{algorithmic}
\end{algorithm}

We make some remarks before concluding this section. Firstly, we can naturally apply \cref{alg: AS-secant} to efficiently generate a solution path for \cref{eq: main-prob} with a sequence of noise-level controlling parameters $\varrho_1 > \varrho_2 > \cdots > \varrho_T > 0$. Secondly, we know that, if we apply the AS technique to the level-set method based on \cref{eq: root-finding-origin-levelset}, we may easily encounter the infeasibility issue if $\tau_{k+1} < \tau_k$ in \cref{eq: origin-levelset-subprob}.

\section{Numerical experiments}
\label{sec: numerical-experiment}
In this section, we will present numerical results to demonstrate the high efficiency of our proposed SMOP. We will focus on solving \cref{eq: main-prob} with two objective functions: (1) the $\ell_1$ penalty:  $p(x) = \norm{x}_1$, $x \in \mathbb{R}^n$; (2) the sorted $\ell_1$ penalty:  $p(x) = \sum_{i = 1}^n \gamma_i |x|_{(i)}$, $x \in \mathbb{R}^n$ with given parameters $\gamma_1 \geq \gamma_2 \geq \cdots \geq \gamma_n \geq 0$ and $\gamma_1 > 0$, where $|x|_{(1)} \geq |x|_{(2)} \geq \cdots \geq |x|_{(n)}$, which serve as illustrative examples to highlight the efficiency of our algorithm. It is worthwhile mentioning that the sorted $\ell_1$ penalty is not separable. For demonstration purposes only, we will test the performance of SMOP when $p(\cdot)$ is a non-polyhedral function at the end of this section.
\begin{table}[htbp]
	\centering
	\tiny
	\caption{Statistics of the UCI test instances.}
	\begin{tabular}{cccccc}
		\toprule
		Problem idx & Name  & m     & n     & Sparsity(A) & norm(b) \\
		\midrule
		1     & E2006.train & 16087 & 150360 & 0.0083 & 452.8605 \\
		2     & log1p.E2006.train & 16087 & 4272227 & 0.0014 & 452.8605 \\
		3     & E2006.test & 3308  & 150358 & 0.0092 & 221.8758 \\
		4     & log1p.E2006.test & 3308  & 4272226 & 0.0016 & 221.8758 \\
		5     & pyrim5 & 74    & 201376 & 0.5405 & 5.7768 \\
		6     & triazines4 & 186   & 635376 & 0.6569 & 9.1455 \\
		7     & bodyfat7 & 252   & 116280 & 1.0000 & 16.7594 \\
		8     & housing7 & 506   & 77520 & 1.0000 & 547.3813 \\
		\bottomrule
	\end{tabular}%
	\label{tab:ucidetails}
\end{table}%

In our numerical experiments, we measure the accuracy of the obtained solution $\tilde{x}$ for \cref{eq: main-prob} by the following relative residual:
\[
	\eta := \frac{|\tilde\varphi - \varrho|}{\max\{1,\,\varrho\}},
\]
where $\tilde{\varphi} := \norm{A\tilde{x} - b}$.
We test all algorithms on datasets from UCI Machine Learning Repository as in \cite{li2018highly,li2018efficiently}, which are originally obtained from the LIBSVM datasets \cite{chih2011libsvm}. Table \ref{tab:ucidetails} presents the statistics of the tested UCI instances. All our computational results are obtained using MATLAB R2023a on a Windows workstation with the following specifications: 12-core Intel(R) Core(TM) i7-12700 (2.10GHz) processor, and 64 GB of RAM. {In all the tables presented in this section, $\text{nnz}(x)$ represents the number of elements in the solution $x$ obtained by SMOP (with a stopping tolerance of $10^{-6}$) for solving \cref{eq: main-prob} that have an absolute value greater than $10^{-8}$. Besides, We denote BMOP (NMOP) as the root finding based bisection method (hybrid of the bisection method and the semismooth Newton method) for solving the optimization problem \cref{eq: main-prob}.
}

\subsection{The $\ell_1$ penalized problems with least-squares constraints}
In this subsection, we focus on the problem \cref{eq: main-prob} with $p(\cdot) = \norm{\cdot}_1$. We will compare the efficiency of SMOP to the state-of-the-art SSNAL-LSM algorithm \cite{li2018efficiently}, SPGL1 solver \cite{Van08levelset, spgl1site} and ADMM.
Moreover, we perform experiments to demonstrate that our secant method is considerably more efficient than the bisection method for root finding while performing on par with the semismooth Newton method, where the HS-Jacobian is computable.

\begin{table}[htbp]
	\centering
	\tiny
	\caption{The values of $c$ to obtain $\varrho = c \norm{b}$ {for the $\ell_1$ penalized problems with least-squares constraints}. In this table, $c_{LS} = \frac{\lambda^*}{\norm{A^Tb}_\infty}$ represents the regularization parameter for the corresponding ${\rm P_{LS}}(\lambda^*)$, where the optimal solution $\lambda^*$ to $\varphi(\lambda) = \varrho$ is obtained by SMOP.}
	\begin{tabular}{cccccccccc}
		\hline
		Test   & idx   & 1     & 2     & 3     & 4     & 5     & 6         & 7     & 8     \bigstrut\\
		\hline
		\multirow{3}[2]{*}{I} & c     & 0.1   & 0.1   & 0.08  & 0.08  & 0.05  & 0.1    & 0.001 & 0.1    \bigstrut[t]\\
		& nnz(x) & 339   & 110   & 246   & 405   & 79    & 655      & 107   & 148    \\
		&$c_{LS}$ & 2.6-7 & 2.8-4 & 4.2-7 & 2.1-4 & 5.7-3 & 2.8-3  & 1.1-6 & 1.3-3 \bigstrut[b]\\
		\hline
		\multirow{3}[2]{*}{II} & c     & 0.09  & 0.09  & 0.06  & 0.06  & 0.015 & 0.03   & 0.0001 & 0.04   \bigstrut[t]\\
		& nnz(x) & 1387  & 1475  & 884   & 1196  & 92    & 497      & 231   & 377    \\
		& $c_{LS}$ & 1.1-7 & 6.2-5 & 1.7-7 & 9.6-5 & 3.0-4 & 5.6-5  & 3.8-8 & 3.0-5  \bigstrut[b]\\
		\hline
	\end{tabular}%
	\label{tab:testc}%
\end{table}%

\begin{figure}
	\centering
	\begin{minipage}[b]{0.7\linewidth}
		\centering
		\includegraphics[width=\textwidth]{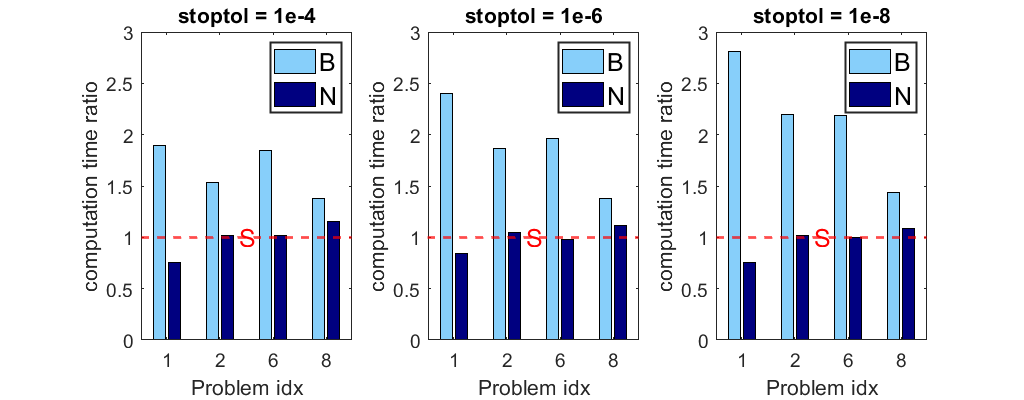}
		\caption*{Test I}
	\end{minipage}
	\\
	\begin{minipage}[b]{0.7\linewidth}
		\centering
		\includegraphics[width=\textwidth]{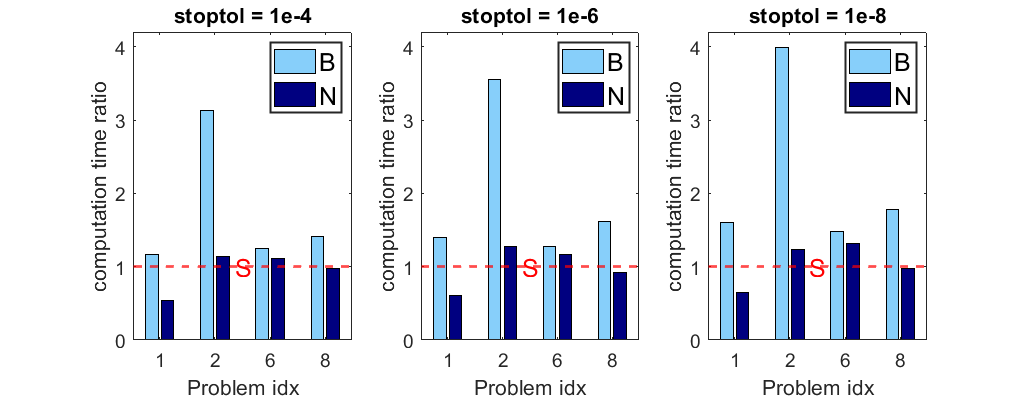}
		\caption*{Test II}
	\end{minipage}
 	\caption{The ratio of the computation time between BMOP (B) and NMOP (N) to the computation time of SMOP in solving \cref{eq: main-prob} { with the $\ell_1$ regularization}.}
	\label{fig:ratioBSN}
 \vspace{-0.6cm}
\end{figure}

In practice, we have multiple choices for solving the sub-problems in SMOP. In our experiments, we utilized the squared smoothing Newton method \cite{qi2000new, gao2009calibrating} and SSNAL to solve the sub-problems in SMOP. The maximum number of iterations for SPGL1, SSNAL, and ADMM is set to 100,000, while for SMOP and SSNAL-LSM, the maximum number of iterations of the outermost loop is set to 200. {Additionally, we have set the maximum running time to 1 hour.} To select $\varrho$, we use the values of $c$ in \cref{tab:testc} for each instance listed in \cref{tab:ucidetails}, and let $\varrho = c\norm{b}$. {At last, we point out that the adaptive sieving technique is not employed in the SSNAL-LSM.}

\begin{table}[htbp]
  \centering
  \tiny
  \caption{The performance of SMOP (A1), SSNAL-LSM (A2), SPGL1 (A3) and ADMM (A4), in solving {the $\ell_1$ penalized problems with least-squares constraints} \cref{eq: main-prob} with $\varrho = c\norm{b}$, where the specific value of $c$ for each problem is listed in \cref{tab:testc}. In the table, the underline is used to mark cases where the algorithm fails to reach the given tolerance. For simplicity, we omit the ``e" in the scientific notation.}
    \begin{tabular}{|c|c|c|c|}
    \hline
    \multirow{2}[4]{*}{\textbf{idx}} & \textbf{time (s)} & \boldmath{}\textbf{$\eta$}\unboldmath{} & \textbf{outermost iter} \bigstrut\\
\cline{2-4}          & \textbf{A1 $|$ A2 $|$ A3 $|$ A4} & \textbf{A1 $|$ A2 $|$ A3 $|$ A4} & \textbf{A1 $|$ A2 $|$ A3 $|$ A4} \bigstrut\\
    \hline
          & \multicolumn{3}{c|}{Test I with stoptol = $10^{-4}$} \bigstrut[t]\\
    \hline
    \textbf{1} & 1.39+0 $|$ 2.18+2 $|$ 3.51+2 $|$ 4.22+2 & 2.3-5 $|$ 4.9-5 $|$ 1.0-4 $|$ 1.0-4 & 24 $|$ 29 $|$ 7342 $|$ 2049 \bigstrut[t]\\
    \textbf{2} & 2.29+0 $|$ 5.12+2 $|$ 1.45+3 $|$ 6.84+2 & 3.1-6 $|$ 7.8-5 $|$ 9.0-5 $|$ 8.7-5 & 12 $|$ 16 $|$ 3445 $|$ 1470 \\
    \textbf{3} & 4.02$-$1 $|$ 5.83+1 $|$ 3.21+2 $|$ 8.87+1 & 9.4-6 $|$ 2.6-5 $|$ 1.0-4 $|$ 1.0-4 & 24 $|$ 30 $|$ 21094 $|$ 4918 \\
    \textbf{4} & 1.59+0 $|$ 2.06+2 $|$ 7.19+2 $|$ 9.90+1 & 1.2-5 $|$ 7.3-5 $|$ 9.5-5 $|$ 1.3-5 & 13 $|$ 15 $|$ 3174 $|$ 854 \\
    \textbf{5} & 2.73$-$1 $|$ 1.20+1 $|$ 9.81+0 $|$ 5.63+0 & 6.9-6 $|$ 5.4-6 $|$ 7.4-5 $|$ 2.2-5 & 6 $|$ 14 $|$ 498 $|$ 273 \\
    \textbf{6} & 2.32+0 $|$ 1.74+2 $|$ 3.35+2 $|$ 1.01+2 & 5.8-6 $|$ 4.4-5 $|$ 9.1-5 $|$ 7.5-5 & 9 $|$ 17 $|$ 1987 $|$ 571 \\
    \textbf{7} & 4.35$-$1 $|$ 9.12+0 $|$ 8.98+0 $|$ 8.59+0 & 2.8-5 $|$ 5.9-5 $|$ 9.8-5 $|$ 9.9-5 & 15 $|$ 18 $|$ 539 $|$ 583 \\
    \textbf{8} & 2.99$-$1 $|$ 9.07+0 $|$ 1.29+1 $|$ 7.94+0 & 2.6-5 $|$ 8.6-5 $|$ 1.0-4 $|$ 9.0-5 & 10 $|$ 14 $|$ 515 $|$ 424 \\
    \hline
          & \multicolumn{3}{c|}{Test I with stoptol = $10^{-6}$} \bigstrut[t]\\
    \hline
    \textbf{1} & 1.45+0 $|$ 3.22+2 $|$ 1.51+3 $|$ 7.06+2 & 2.5-7 $|$ 6.1-8 $|$ 9.9-7 $|$ 1.0-6 & 25 $|$ 36 $|$ 28172 $|$ 3539 \bigstrut[t]\\
    \textbf{2} & 2.52+0 $|$ 6.68+2 $|$ 1.75+3 $|$ 3.42+3 & 9.9-8 $|$ 3.5-8 $|$ 9.2-7 $|$ 9.9-7 & 13 $|$ 24 $|$ 4155 $|$ 8725 \\
    \textbf{3} & 4.12$-$1 $|$ 7.40+1 $|$ \underline{2.11+3} $|$ 1.81+2 & 1.1-8 $|$ 2.3-7 $|$ \underline{6.2-6} $|$ 1.0-6 & 25 $|$ 35 $|$ \underline{100000} $|$ 10100 \\
    \textbf{4} & 1.72+0 $|$ 3.40+2 $|$ 1.04+3 $|$ 4.03+2 & 1.3-9 $|$ 5.7-7 $|$ 7.2-7 $|$ 7.9-7 & 14 $|$ 26 $|$ 4584 $|$ 3820 \\
    \textbf{5} & 2.93$-$1 $|$ 1.61+1 $|$ 4.58+1 $|$ 3.95+2 & 1.0-7 $|$ 6.0-8 $|$ 9.1-7 $|$ 9.8-7 & 7 $|$ 19 $|$ 2468 $|$ 20155 \\
    \textbf{6} & 2.47+0 $|$ 2.13+2 $|$ 8.24+2 $|$ 2.31+3 & 3.0-7 $|$ 4.0-7 $|$ 8.2-7 $|$ 3.4-7 & 10 $|$ 23 $|$ 5578 $|$ 13672 \\
    \textbf{7} & 4.68$-$1 $|$ 1.18+1 $|$ 9.11+0 $|$ 1.85+1 & 1.9-9 $|$ 9.6-7 $|$ 2.7-7 $|$ 9.9-7 & 17 $|$ 22 $|$ 544 $|$ 1250 \\
    \textbf{8} & 3.28$-$1 $|$ 1.45+1 $|$ 3.84+1 $|$ 4.40+1 & 2.4-7 $|$ 8.4-8 $|$ 4.0-7 $|$ 8.7-7 & 11 $|$ 24 $|$ 1539 $|$ 2427 \\
    \hline
          & \multicolumn{3}{c|}{Test II with stoptol = $10^{-4}$} \bigstrut[t]\\
    \hline
    \textbf{1} & 7.26+0 $|$ 4.51+2 $|$ 1.38+3 $|$ 6.12+2 & 3.0-6 $|$ 4.6-5 $|$ 1.0-4 $|$ 1.0-4 & 26 $|$ 30 $|$ 27775 $|$ 3014 \bigstrut[t]\\
    \textbf{2} & 6.79+0 $|$ 1.54+3 $|$ 1.32+3 $|$ 4.01+2 & 1.8-5 $|$ 3.6-5 $|$ 9.7-5 $|$ 6.8-5 & 14 $|$ 21 $|$ 3000 $|$ 733 \\
    \textbf{3} & 3.51+0 $|$ 1.84+2 $|$ \underline{1.50+3} $|$ 1.34+2 & 1.3-5 $|$ 2.3-5 $|$ \underline{8.7-2} $|$ 1.0-4 & 25 $|$ 29 $|$ \underline{100000} $|$ 7333 \\
    \textbf{4} & 2.91+0 $|$ 6.91+2 $|$ 6.23+2 $|$ 4.94+1 & 7.5-6 $|$ 3.6-6 $|$ 9.6-5 $|$ 5.8-5 & 14 $|$ 22 $|$ 2694 $|$ 385 \\
    \textbf{5} & 6.23$-$1 $|$ 1.53+1 $|$ 8.65+0 $|$ 2.01+1 & 2.8-5 $|$ 7.9-6 $|$ 6.6-5 $|$ 9.5-5 & 9 $|$ 13 $|$ 395 $|$ 1000 \\
    \textbf{6} & 9.02+0 $|$ 3.46+2 $|$ \underline{3.60+3} $|$ 3.82+2 & 6.8-6 $|$ 3.7-5 $|$ \underline{7.6-2} $|$ 9.9-5 & 12 $|$ 17 $|$ \underline{24924} $|$ 2232 \\
    \textbf{7} & 1.50+0 $|$ 1.59+1 $|$ 3.06+2 $|$ 3.39+1 & 1.6-5 $|$ 8.7-6 $|$ 9.9-5 $|$ 9.8-5 & 12 $|$ 18 $|$ 19820 $|$ 2340 \\
    \textbf{8} & 2.37+0 $|$ 1.90+1 $|$ 1.69+2 $|$ 1.19+1 & 1.4-6 $|$ 8.9-5 $|$ 9.1-5 $|$ 9.8-5 & 13 $|$ 18 $|$ 5914 $|$ 644 \\
    \hline
          & \multicolumn{3}{c|}{Test II with stoptol = $10^{-6}$} \bigstrut[t]\\
    \hline
    \textbf{1} & 7.23+0 $|$ 5.96+2 $|$ \underline{3.60+3} $|$ 8.82+2 & 3.7$-$9 $|$ 2.9-7 $|$ \underline{3.6-2} $|$ 1.0-6 & 27 $|$ 35 $|$ \underline{62384} $|$ 4453 \bigstrut[t]\\
    \textbf{2} & 7.37+0 $|$ 1.85+3 $|$ 2.04+3 $|$ 1.46+3 & 1.4$-$7 $|$ 3.9-7 $|$ 9.7-7 $|$ 1.0-7 & 15 $|$ 27 $|$ 4688 $|$ 3464 \\
    \textbf{3} & 3.59+0 $|$ 2.36+2 $|$ \underline{1.49+3} $|$ 1.99+2 & 8.1-10 $|$ 8.3-7 $|$ \underline{8.7-2} $|$ 1.0-6 & 26 $|$ 36 $|$ \underline{100000} $|$ 11051 \\
    \textbf{4} & 3.02+0 $|$ 8.44+2 $|$ 1.37+3 $|$ 2.18+2 & 3.1$-$9 $|$ 4.3-7 $|$ 9.9-7 $|$ 6.0-7 & 15 $|$ 28 $|$ 5912 $|$ 1980 \\
    \textbf{5} & 6.37$-$1 $|$ 2.49+1 $|$ 4.14+2 $|$ 1.48+2 & 2.4$-$7 $|$ 3.0-8 $|$ 8.7-7 $|$ 9.7-7 & 10 $|$ 22 $|$ 22091 $|$ 7592 \\
    \textbf{6} & 9.37+0 $|$ 4.25+2 $|$ \underline{3.60+3} $|$ 3.60+3 & 5.4-11 $|$ 6.7-7 $|$ \underline{7.5-2} $|$ 1.2-7 & 14 $|$ 22 $|$ \underline{25158} $|$ 21556 \\
    \textbf{7} & 1.59+0 $|$ 2.09+1 $|$ 3.37+2 $|$ 8.54+1 & 3.2$-$7 $|$ 1.9-8 $|$ 8.8-7 $|$ 9.7-7 & 13 $|$ 23 $|$ 21523 $|$ 5817 \\
    \textbf{8} & 2.39+0 $|$ 2.68+1 $|$ 1.65+3 $|$ 3.34+1 & 4.5$-$7 $|$ 6.9-7 $|$ 8.8-7 $|$ 9.8-7 & 14 $|$ 26 $|$ 59147 $|$ 1834 \\
    \hline
    \end{tabular}%
  \label{tab:lassoCP}%
\end{table}%

We compare SMOP to SPGL1, SSNAL-LSM and ADMM to solve \cref{eq: main-prob} with the tolerances of $10^{-4}$ and $10^{-6}$, respectively. The test results are presented in \cref{tab:lassoCP}. These results indicate that SMOP successfully solves all the tested instances and outperforms SSNAL-LSM, SPGL1 and ADMM. It can be seen from \cref{tab:lassoCP} that SMOP can achieve a speed-up of up to 1,000 times compared to SPGL1 for the problems that can be solved by SPGL1 (a significant number of instances cannot be solved by SPGL1 to the required accuracy). {Regarding ADMM, SMOP remains significantly superior in terms of efficiency for all cases, with a speed-up of over 1300 times. } Furthermore, compared to SSNAL-LSM, SMOP also shows {vast} superiority, with efficiency improvements up to more than 260 times. Note that SPGL1 has two modes: the primal mode (denoted by SPGL1) and the hybrid mode (denoted by SPGL1\_H). We do not print the results of SPGL1\_H since SPGL1 outperforms SPGL1\_H in most of the cases in our tests.

Subsequently, we perform numerical experiments to compare the performance of the secant method to the bisection method and the HS-Jacobian based semismooth Newton method for finding the root of \cref{eq: root-finding-levelset-lst} to further illustrate the efficiency of SMOP.
\cref{fig:ratioBSN} presents the ratio of the computation time between BMOP and NMOP to the computation time of SMOP on solving \cref{eq: main-prob} for some instances. The numerical results show that SMOP easily beats BMOP, with a large margin when a higher precision solution is required. The results also reveal that SMOP performs comparably to NMOP for solving the $\ell_1$ penalized {least-squares constrained} problem, in which the HS-Jacobian are computable. This is another strong evidence to illustrate the efficiency of SMOP.

\begin{figure}[htbp]
	\centering	
	\begin{subfigure}[b]{0.35\textwidth}
		\includegraphics[width=\textwidth]{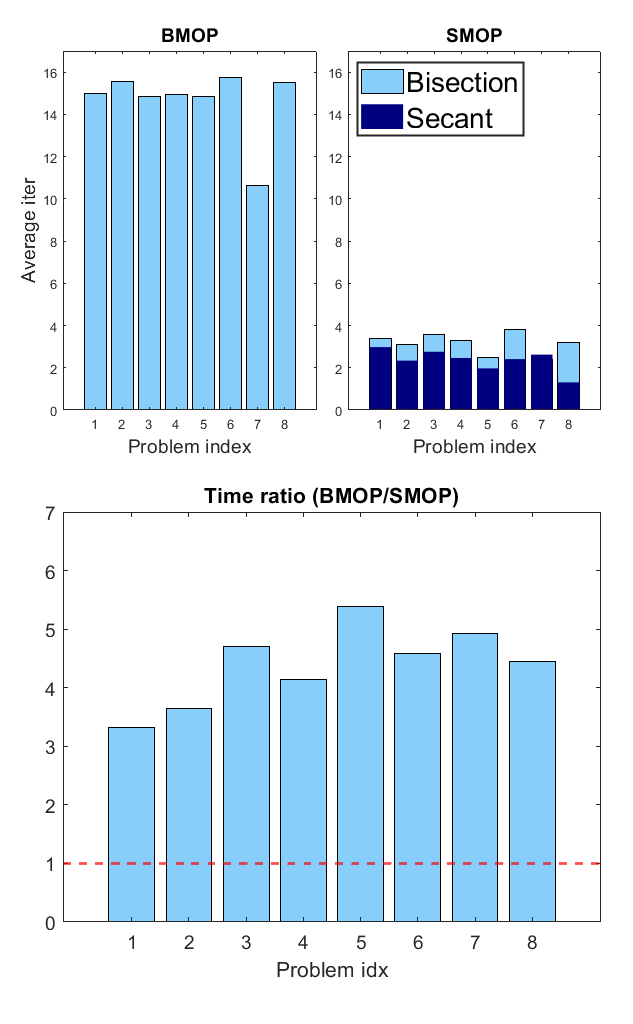}
		\caption*{Test I}
	\end{subfigure}
	\begin{subfigure}[b]{0.35\textwidth}
		\includegraphics[width=\textwidth]{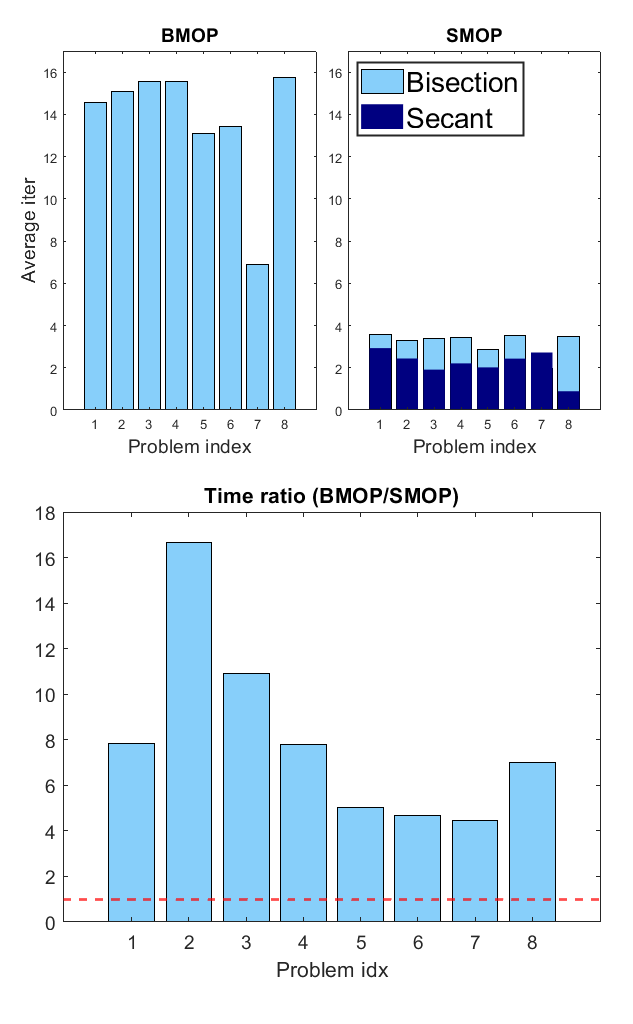}
		\caption*{Test II}
	\end{subfigure}
	\caption{The performance of BMOP and SMOP in generating a solution path for \cref{eq: main-prob} { with the $\ell_1$ regularization} and stopping tolerance $10^{-6}$.}
	\label{fig:solutionpath}
 \vspace{-0.6cm}
\end{figure}

Next we perform tests on BMOP and SMOP to generate a solution path for \cref{eq: main-prob} involving multiple choices of tuning parameters $\varrho > 0$. In this test, we solve \cref{eq: main-prob} with $\varrho_i = c_i \cdot c \norm{b}, \, i = 1, \dots, 100$, where $c_i = 1.5 - 0.5 \times (i-1)/99$ and $c$ is the same constant as in Table \ref{tab:testc}.
In this test, we apply the warm-start strategy to both algorithms. The average iteration numbers of BMOP and SMOP and the ratio of the computation time of BMOP to the computation time of SMOP are shown in \cref{fig:solutionpath} with a tolerance of $10^{-6}$.
From this figure, it is evident that utilizing the secant method for root-finding significantly reduces the number of iterations by around 4 times. {The reduction in iterations results in a substantial decrease in computation time for SMOP, which is typically less than one-third of the time required by BMOP.}

\subsection{The sorted $\ell_1$ penalized problems with least-squares constraints}

In this subsection, we will present the numerical results of SMOP in solving the sorted $\ell_1$ penalized problems with least-squares constraints \cref{eq: main-prob}. {For comparison purposes, we also conducted tests on Newt-ALM-LSM (similar to SSNAL-LSM, but with the sub-problems solved by Newt-ALM \cite{luo2019solving}) and ADMM for \cref{eq: main-prob}.}

\begin{table}[htbp]
  \centering
  \tiny
  \caption{The performance of SMOP (A1), Newt-ALM-LSM (A2) and ADMM (A4), in solving {the sorted $\ell_1$ penalized problems with least-squares constraints} \cref{eq: main-prob} with $\varrho = c\norm{b}$. In the table, $c_{LS} = \frac{\lambda^*}{\norm{A^Tb}_\infty}$ represents the regularization parameter for the corresponding ${\rm P_{LS}}(\lambda^*)$, where the optimal solution $\lambda^*$ to $\varphi(\lambda) = \varrho$ is obtained by SMOP. The stopping tolerance is set to $10^{-6}$ and the underline is used to mark cases where the algorithm fails to reach the given tolerance. For simplicity, we omit the ``e" in the scientific notation.}
    \begin{tabular}{|c|c|c|c|c|}
    \hline
    \multirow{2}[4]{*}{idx} & \multirow{2}[4]{*}{c $|$ nnz(x) $|$ $c_{LS}$} & time (s) & $\eta$ & outermost iter \bigstrut\\
\cline{3-5}          &       & A1 $|$ A2 $|$ A4 & A1 $|$ A2 $|$ A4 & A1 $|$ A2 $|$ A4 \bigstrut\\
    \hline
    \multicolumn{5}{|c|}{Test I} \bigstrut\\
    \hline
    2     & 0.15 $|$ 3 $|$ 2.4-2 & 3.84+0 $|$ 1.34+2 $|$ \underline{3.60+3} & 1.1-7 $|$ 5.3-7 $|$ \underline{2.8-1} & 8 $|$ 21 $|$ \underline{8637} \bigstrut[t]\\
    4     & 0.1 $|$ 3 $|$ 4.8-3 & 4.79+0 $|$ 1.35+2 $|$ \underline{3.60+3} & 6.0-7 $|$ 8.9-7 $|$ \underline{2.9-4} & 10 $|$ 17 $|$ \underline{28891} \\
    5     & 0.1 $|$ 113 $|$ 1.9-2 & 6.29$-$1 $|$ 4.98+1 $|$ 4.23+2 & 1.0-7 $|$ 4.5-7 $|$ 1.5-7 & 7 $|$ 22 $|$ 17974 \\
    6     & 0.15 $|$ 413 $|$ 1.0-2 & 3.10+0 $|$ 2.43+2 $|$ \underline{3.60+3} & 2.7-7 $|$ 1.6-7 $|$ \underline{1.9-4} & 9 $|$ 21 $|$ \underline{19071} \\
    7     & 0.002 $|$ 22 $|$ 1.9-5 & 3.56$-$1 $|$ 1.67+1 $|$ 2.44+1 & 3.6-9 $|$ 6.0-7 $|$ 9.9-7 & 14 $|$ 22 $|$ 1616 \\
    8     & 0.15 $|$ 95 $|$ 6.9-3 & 6.06$-$1 $|$ 2.55+1 $|$ 1.57+2 & 1.3-7 $|$ 7.7-7 $|$ 9.0-7 & 10 $|$ 23 $|$ 8329 \\
    \hline
    \multicolumn{5}{|c|}{Test II} \bigstrut\\
    \hline
    1     & 0.1 $|$ 339 $|$ 2.6-7 & 2.53+1 $|$ 1.40+2 $|$ 5.13+2 & 2.9-7 $|$ 5.6-7 $|$ 1.0-6 & 25 $|$ 34 $|$ 2490 \bigstrut[t]\\
    2     & 0.095 $|$ 629 $|$ 1.0-4 & 5.39+1 $|$ 4.82+2 $|$ 2.87+3 & 1.7-7 $|$ 2.9-7 $|$ 9.4-7 & 17 $|$ 27 $|$ 6770 \\
    3     & 0.08 $|$ 246 $|$ 4.2-7 & 4.98+0 $|$ 6.54+1 $|$ 1.60+2 & 2.0-8 $|$ 7.1-7 $|$ 1.0-6 & 25 $|$ 36 $|$ 8491 \\
    4     & 0.07 $|$ 758 $|$ 1.4-4 & 2.26+1 $|$ 4.26+2 $|$ 5.86+2 & 4.0-8 $|$ 9.0-7 $|$ 9.8-7 & 16 $|$ 27 $|$ 4550 \\
    5     & 0.02 $|$ 95 $|$ 5.7-4 & 2.05+0 $|$ 9.87+1 $|$ 3.58+2 & 3.2-8 $|$ 5.6-7 $|$ 7.6-7 & 11 $|$ 20 $|$ 15582 \\
    6     & 0.05 $|$ 997 $|$ 5.5-4 & 2.32+1 $|$ 1.04+3 $|$ \underline{3.60+3} & 8.4-7 $|$ 2.1-7 $|$ \underline{3.5-6} & 10 $|$ 23 $|$ \underline{19159} \\
    7     & 0.001 $|$ 107 $|$ 1.1-6 & 1.02+0 $|$ 2.85+1 $|$ 1.30+1 & 5.9-8 $|$ 6.9-9 $|$ 9.5-7 & 17 $|$ 22 $|$ 826 \\
    8     & 0.08 $|$ 206 $|$ 4.3-4 & 3.38+0 $|$ 1.03+2 $|$ 5.58+1 & 5.7-9 $|$ 7.4-7 $|$ 3.8-7 & 13 $|$ 25 $|$ 2842 \\
    \hline
    \end{tabular}%
  \label{tab:slopecomp}%
\end{table}%

In our numerical experiments, we choose the parameters $\gamma_i = 1 - (i-1)/(n-1), \, i = 1, \dots, n,$ in the sorted $\ell_1$ penalty function $p(x) = \sum_{i = 1}^n \gamma_i |x|_{(i)}, \ x \in \mathbb{R}^n$.
The maximum iteration number is set to 200 for SMOP and Newt-ALM-LSM, and 100,000 for ADMM. {The sub-problems in SMOP are solved by Newt-ALM in this test. In all experiments within this subsection, the stopping tolerance is set to $10^{-6}$. The results obtained with a tolerance of $10^{-4}$ are similar to those obtained with a tolerance of $10^{-6}$, therefore, we will not present them in order to save space.}
We set $\varrho = c\norm{b}$, where the values of $c$ are specified in \cref{tab:slopecomp}.
The numerical results are presented in \cref{tab:slopecomp}. {From the table, it is evident that SMOP outperforms Newt-ALM-LSM and ADMM for all the cases. More specifically, SMOP can be up to around 80 times faster than Newt-ALM-LSM and up to more than 600 times faster than ADMM for the problems that can be solved by ADMM. Additionally, Figure \ref{fig:ratioBSslope} presents the computation time ratio between BMOP and SMOP for both Test I and Test II. This also demonstrates the significance of the secant method in root-finding for achieving higher efficiency.}

\begin{figure}
	\centering
		\includegraphics[width=0.8\textwidth]{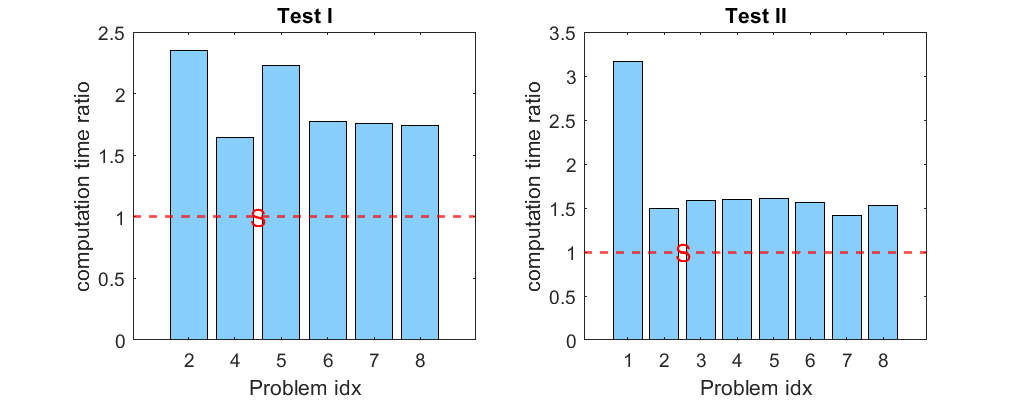}
 	\caption{The ratio of the computation time between BMOP to the computation time of SMOP in solving \cref{eq: main-prob} {with the sorted $\ell_1$ regularization}.}
	\label{fig:ratioBSslope}
\vspace{-0.6cm}
\end{figure}

\subsection{A group lasso penalized problems with least-squares constraints}

\begin{table}[htbp]
  \centering
  \tiny
  \caption{{The values of $c$ to obtain $\varrho = c \norm{b}$ for the group lasso penalized problems with least-squares constraints. In the table, $c_{LS} = \frac{\lambda^*}{\norm{A^Tb}_\infty}$ represents the regularization parameter for the corresponding ${\rm P_{LS}}(\lambda^*)$, where the optimal solution $\lambda^*$ to $\varphi(\lambda) = \varrho$ is obtained by SMOP.}}
    \begin{tabular}{ccccc}
    \hline
          & idx   & c     & nnz(x) & $c_{LS}$ \bigstrut\\
    \hline
    \multirow{6}[2]{*}{Test I} & 4     & 0.1   & 6     & 4.4-3 \bigstrut[t]\\
          & 5     & 0.1   & 50    & 2.4-2 \\
          & 6     & 0.15  & 138   & 1.3-2 \\
          & 7     & 0.002 & 28    & 2.4-5 \\
          & 8     & 0.15  & 66    & 8.4-3 \\
    \hline
    \multirow{8}[2]{*}{Test II} & 1     & 0.105 & 95    & 7.5-7 \bigstrut[t]\\
          & 3     & 0.08  & 403   & 4.3-7 \\
          & 4     & 0.08  & 731   & 2.2-4 \\
          & 5     & 0.02  & 120   & 9.1-4 \\
          & 6     & 0.05  & 372   & 6.3-4 \\
          & 7     & 0.001 & 186   & 1.3-6 \\
          & 8     & 0.08  & 260   & 4.9-4 \\
    \hline
    \end{tabular}%
  \label{tab:groupcvalue}%
\end{table}%

\begin{table}[htbp]
  \centering
  \tiny
  \caption{{The performance of SMOP (A1), SSNAL-LSM (A2), SPGL1 (A3) and ADMM (A4), in solving the group lasso penalized problems with least-squares constraints \cref{eq: main-prob} with $\varrho = c\norm{b}$.  The stopping tolerance is set to $10^{-6}$ and the underline is used to mark cases where the algorithm fails to reach the given tolerance. For simplicity, we omit the ``e" in the scientific notation.}}
    \begin{tabular}{|c|c|c|c|}
    \hline
    \multirow{2}[4]{*}{idx} & time (s) & $\eta$ & outermost iter \bigstrut\\
\cline{2-4}          & A1 $|$ A2 $|$ A3  $|$ A4 & A1 $|$ A2 $|$ A3  $|$ A4 & A1 $|$ A2 $|$ A3  $|$ A4 \bigstrut\\
    \hline
    \multicolumn{4}{|c|}{Test I} \bigstrut\\
    \hline
    4     & 3.75+0 $|$ 1.16+2 $|$ 8.49+2 $|$ \underline{3.60+3} & 1.3$-$7 $|$ 3.1-7 $|$ 6.37-7 $|$ \underline{7.2-5} & 11 $|$ 21 $|$ 3024 $|$ \underline{22125} \bigstrut[t]\\
    5     & 8.14$-$1 $|$ 2.74+2 $|$ 2.96+1 $|$ 9.16+2 & 1.4$-$9 $|$ 3.5-7 $|$ 6.05-7 $|$ 1.0-6 & 11 $|$ 21 $|$ 1319 $|$ 38530 \\
    6     & 5.19+0 $|$ 1.46+3 $|$ 1.70+2 $|$ 3.02+3 & 3.2-10 $|$ 4.5-7 $|$ 5.98-7 $|$ 9.8-7 & 10 $|$ 22 $|$ 1086 $|$ 15768 \\
    7     & 5.98$-$1 $|$ 8.80+0 $|$ 3.02+1 $|$ 2.59+1 & 3.7$-$8 $|$ 5.0-7 $|$ 2.07-7 $|$ 1.0-6 & 14 $|$ 19 $|$ 2102 $|$ 1627 \\
    8     & 6.88$-$1 $|$ 1.41+2 $|$ 8.30+0 $|$ 1.19+2 & 1.8$-$8 $|$ 2.6-7 $|$ 2.46-7 $|$ 9.6-7 & 9 $|$ 22 $|$ 334 $|$ 6211 \\
    \hline
    \multicolumn{4}{|c|}{Test II} \bigstrut\\
    \hline
    1     & 3.29+0 $|$ 4.33+1 $|$ 3.18+3 $|$ 1.12+3 & 2.7-7 $|$ 2.7-7 $|$ 9.8-7 $|$ 1.0-6 & 24 $|$ 29 $|$ 55596 $|$ 5826 \bigstrut[t]\\
    3     & 3.83+0 $|$ 3.00+1 $|$ \underline{2.06+3} $|$ 2.57+2 & 1.3-7 $|$ 3.4-7 $|$ \underline{3.8-6} $|$ 1.0-6 & 22 $|$ 36 $|$ \underline{100000} $|$ 13031 \\
    4     & 2.97+1 $|$ 2.42+3 $|$ 1.19+3 $|$ 5.86+2 & 5.2-7 $|$ 9.6-9 $|$ 8.6-7 $|$ 7.4-7 & 13 $|$ 27 $|$ 4241 $|$ 3401 \\
    5     & 1.70+0 $|$ 1.29+2 $|$ 3.30+2 $|$ 9.27+1 & 8.0-7 $|$ 1.7-8 $|$ 8.9-7 $|$ 6.5-7 & 9 $|$ 20 $|$ 18001 $|$ 3959 \\
    6     & 2.51+1 $|$ 1.39+3 $|$ \underline{3.60+3} $|$ 3.60+3 & 1.3-8 $|$ 1.4-7 $|$ \underline{5.8-5} $|$ 2.6-7 & 11 $|$ 22 $|$ \underline{20646} $|$ 19075 \\
    7     & 1.22+0 $|$ 1.88+1 $|$ 5.99+2 $|$ 2.69+1 & 5.3-8 $|$ 2.1-8 $|$ 6.9-7 $|$ 9.9-7 & 15 $|$ 23 $|$ 41578 $|$ 1685 \\
    8     & 5.75+0 $|$ 1.47+2 $|$ 1.14+2 $|$ 1.94+2 & 2.5-7 $|$ 3.7-7 $|$ 4.4-7 $|$ 9.8-7 & 15 $|$ 25 $|$ 4373 $|$ 9974 \\
    \hline
    \end{tabular}%
  \label{tab:groupcomp}%
\end{table}%

\begin{figure}
	\centering
		\includegraphics[width=0.8\textwidth]{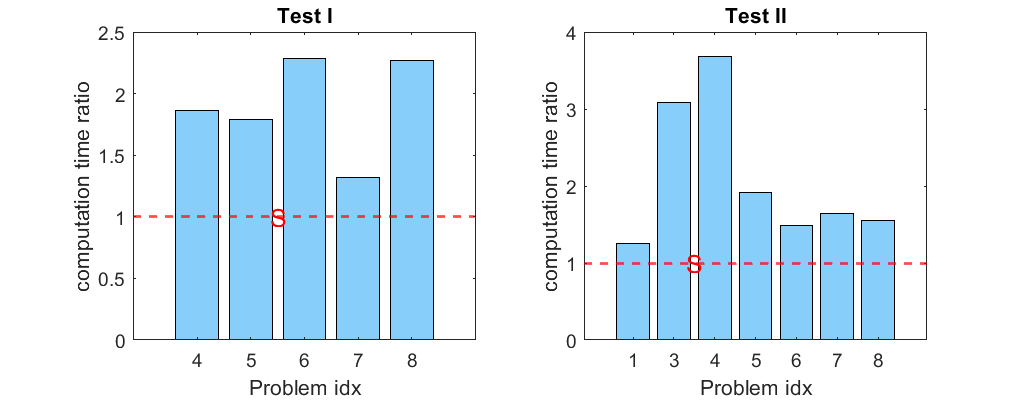}
 	\caption{{The ratio of the computation time between BMOP to the computation time of SMOP in solving \cref{eq: main-prob} with the group lasso regularization}.}
	\label{fig:ratioBSgroup}
 \vspace{-0.6cm}
\end{figure}

In this subsection, we will present the numerical experiments conducted to solve a group lasso penalized problems with least-squares constraints. The purpose of this demonstration is to illustrate the potential and high efficiency of our proposed secant method in solving the equation $\varphi (\lambda) = \varrho$ for the non-polyhedral function penalized problems with least squares constraints. We will compare our algorithm, SMOP, with other state-of-the-art algorithms to demonstrate its high efficiency and robustness.

We consider the following penalty function $p(\cdot)$ in this subsection:
\begin{equation}
\label{eq: group-main}
p(x) = \sum_{t=1}^{l} \sqrt{x_{2t-1}^2 + x_{2t}^2}, \quad x \in \mathbb{R}^{2l}.
\end{equation}
For the purpose of demonstration, we will keep using the UCI dataset that was utilized in the previous two subsections. However, it is necessary to ensure that the value of n is even. Next, we group the $i$-th and $(i+1)$-th elements together for all $i = 1, 3, \cdots, n-1$. The values of $c$ utilized to obtain $\varrho = c \norm{b}$ are presented in Table \ref{tab:groupcvalue}. In SMOP, the sub-problems are solved by SSNAL \cite{zhang2020efficient}. The maximum iteration number for both SMOP and SSNAL-LSM is set to 200, while for SPGL1 and ADMM, their maximum iteration number is set to 100,000. As for the maximum running time, it remains set at 1 hour. Next, we will compare SMOP with the state-of-the-art algorithms SSNAL-LSM, SPGL1, and ADMM. The results of the tests are presented in Table \ref{tab:groupcomp}. From the table, it is evident that SMOP outperforms SSNAL-LSM, SPGL1, and ADMM with speed-ups of up to 300, 900, and 1,100, respectively. In addition, Figure \ref{fig:ratioBSgroup} illustrates the ratio of computation time between BMOP and SMOP. This figure clearly shows that using the secant method can greatly enhance overall efficiency, resulting in a speed improvement of approximately 1.5-4 times, even when dealing with the non-polyhedral penalty function \cref{eq: group-main}.

\section{Conclusion}
\label{sec: conclusion}
In this paper, we have designed an efficient sieving based secant method for solving \cref{eq: main-prob}. When $p(\cdot)$ is a polyhedral gauge function, we have proven that for any $\bar{\lambda} \in (0, \lambda_{\infty})$,  all  $v \in \partial \varphi(\bar{\lambda})$ are positive. Consequently, when $p(\cdot)$ is a polyhedral gauge function, the secant method can solve \cref{eq: root-finding-levelset-lst} with at least a 3-step Q-quadratic convergence rate.  We have demonstrated the high efficiency of our method for solving \cref{eq: main-prob} by two representative instances, specifically, the $\ell_1$ and the sorted $\ell_1$ penalized constrained problems. It is worth mentioning that calculating $\partial_{\rm HS}\varphi(\cdot)$ or $\partial\varphi(\cdot)$ is not an easy task for the sorted $\ell_1$ penalized constrained problems, to the best of our knowledge. Moreover, our numerical results on the $\ell_1$ penalized constrained problems, in which the  $\partial_{\rm HS}\varphi(\cdot)$ is computable as shown in Proposition \ref{eq:HS-Jacobian-ell1},  have verified  that the efficiency of SMOP is not compromised compared to the performance of the HS-Jacobian based semismooth Newton method. This motivates  us to use the secant method instead of the semismooth Newton method for solving \cref{eq: root-finding-levelset-lst} regardless of the availability of the generalized Jacobians. For future research, we will investigate the properties of $ \varphi(\cdot)$ for non-polyhedral functions $p(\cdot)$, {particularly when the nondegenerate condition does not hold.
}

\section*{Acknowledgments}
{The authors would like to thank the referees and the associate editor for their valuable suggestions  to improve  the quality of this paper. Thanks also go to Mr. Jiaming Ma at The Hong Kong Polytechnic University for his helpful discussions on  the proof of \cref{prop: strict-monotonicity-tame}{ (ii)}.}

\bibliographystyle{siamplain}
\bibliography{references_SMOP}
\end{document}


\maketitle

\section{Supplementary proof of Proposition 3.3(ii)}

Assume that 
\begin{equation}
   +\infty > \lambda_{\infty} := \Upsilon(A^T b\,|\,\partial p(0)) > 0.
\end{equation}

The condition $\lambda_{\infty} < + \infty$ implies that
\[
p^{\circ}(A^Tb) \leq \lambda_{\infty} < + \infty
\]
and
\[
A^Tb \in {\rm dom}(p^{\circ}).
\] 

{\color{red}{Assume that 
\[
{\rm Range}(A^T) \bigcap {\rm ri}({\rm dom}p^{\circ}) \neq \emptyset.
\]
}}

Assume that there exists $0< \lambda_1 < \lambda_2 \leq \lambda_{\infty}$ such that $\varphi(\lambda_1) = \varphi(\lambda_2)$, then we must have 
\[
p^{\circ}(A^Ty(\lambda_2)) = p^{\circ}(A^Ty(\lambda_1)) \leq \lambda_1 < \lambda_2 < +\infty.
\]

Note that $(y(\lambda_2), z(\lambda_2)) = (y(\lambda_2), A^Ty(\lambda_2))$ is the solution to 
\begin{equation}
\label{eq: dual}
\min_{y \in \mathbb{R}^m, z \in \mathbb{R}^n} ~ \left\{ \frac{1}{2}\|y\|^2 - \langle b, y \rangle ~|~ A^Ty - z = 0, ~ p^{\circ}(z) \leq \lambda_2 \right\},
\end{equation}

and Lagrangian multiplier(s) to the problem \cref{eq: dual} exist \cite[Theorem 28.2]{rockafellar1970convex}. 

Define
\[
l(y, z; \alpha, x) := \left\{
\begin{array}{ll}
\frac{1}{2}\|y\|^2 - \langle b, y \rangle + \alpha(p^{\circ}(z) - \lambda_2) + \langle x, A^Ty - z\rangle    &  {\rm if } ~ \alpha \geq 0, z \in {\rm dom} p^{\circ},\\
-\infty   & {\rm if} ~ \alpha < 0, z \in {\rm dom} p^{\circ},\\
+\infty & {\rm if} ~ z\not\in {\rm dom} p^{\circ}.
\end{array}
\right.
\]

Then, there exists $(\bar{\alpha}, \bar{x}) \in \mathbb{R}_+ \times \mathbb{R}^n$ such that the Karush-Kuhn-Tucker (KKT) system is satisfied:
\[
\left\{
\begin{array}{l}
\bar{\alpha} \geq 0, ~ p^{\circ}(z(\lambda_2)) < \lambda_2, ~ \bar{\alpha}(p^{\circ}(z(\lambda_2)) - \lambda_2) = 0,\\
A^Ty(\lambda_2) - z(\lambda_2) = 0,\\
y(\lambda_2) - b + A\bar{x} = 0,\\
0 = -\bar{x}.
\end{array}
\right.
\]

The term $\bar{\alpha}\partial p^{\circ}(z(\lambda_2))$ is omitted in the last equation since $\bar{\alpha} = 0$ \cite[Theorem 28.3]{rockafellar1970convex}.

Therefore, $0 \in \Omega(\lambda_2)$, $y(\lambda_2) = b$, and $z(\lambda_2) = A^Tb$. This is a contradiction.

\newpage

\bibliographystyle{siamplain}
\bibliography{references_SMOP}